\newcommand{\dd}{\ensuremath{\, \mathrm{d}}}
\newtheorem{definition}{Definition}[section]
\newtheorem{rem}[definition]{Remark}
\newtheorem{lem}[definition]{Lemma}
\newtheorem{satz}[definition]{Theorem}
\newtheorem{claim}[definition]{Claim}
\newtheorem{cor}[definition]{Corollary}
\newtheorem{prop}[definition]{Proposition}
\numberwithin{equation}{section}  %equation-counter is set back to zero at the beginning of each section
\begin{document}
 %%% Neu:
\begin{titlepage}
\title{Short Time Existence for the Curve Diffusion Flow with a Contact Angle}
\author{  Helmut Abels\footnote{Fakult\"at f\"ur Mathematik,  
Universit\"at Regensburg,
93040 Regensburg,
Germany, e-mail: {\sf helmut.abels@mathematik.uni-regensburg.de}}\ \ and Julia Butz\footnote{Fakult\"at f\"ur Mathematik,  
Universit\"at Regensburg,
93040 Regensburg,
Germany, e-mail: {\sf julia4.butz@mathematik.uni-regensburg.de}}}
%\date{}
\end{titlepage}
\maketitle
\begin{abstract}
We show short-time existence for curves driven by curve diffusion flow with a prescribed contact angle $\alpha \in (0, \pi)$: The evolving curve has free boundary points, which are supported on a line and it satisfies a no-flux condition. The initial data are suitable curves of class $W_2^{\gamma}$ with $\gamma \in (\tfrac{3}{2}, 2]$. For the proof the evolving curve is represented by a height function over a reference curve: The local well-posedness of the resulting quasilinear, parabolic, fourth-order PDE for the height function is proven with the help of contraction mapping principle. Difficulties arise due to the low regularity of the initial curve. To this end, we have to establish suitable product estimates in time weighted anisotropic $L_2$-Sobolev spaces of low regularity for proving that the non-linearities are well-defined and contractive for small times.
\end{abstract}
\noindent{\bf Key words:} curve diffusion, surface diffusion, contact angles, weighted Sobolev spaces

\noindent{\bf AMS-Classification:} 53C44, 35K35, 35K55

%%%%%%%%%%%%%%%%%%%%%%%%%%%%%%%%%%%%%%%%%%%%%%%%%%%%%%%%%%%%%%%%%%%%%%%%%%%%%%%%%%%%%%%%%%%%%%%%
%%%%%%%%%%%%%%%%%%%%%%%%%%%%%%%%%%%%%%%%%%%%%%%%%%%%%%%%%%%%%%%%%%%%%%%%%%%%%%%%%%%%%%%%%%%%%%%%
%%%%%%%%%%%%%%%%%%%%%%%%%%%%%%%%%%%%%%%%%%%%%%%%%%%%%%%%%%%%%%%%%%%%%%%%%%%%%%%%%%%%%%%%%%%%%%%%
\section{Introduction and Main Result} \label{intro}

The curve diffusion flow is the one-dimensional version of the surface diffusion flow, which describes the motion of interfaces in the case that it is governed purely by diffusion within the interface. The geometric evolution equation was originally derived by Mullins to model the development of surface grooves at the grain boundaries of a heated polycrystal in 1957, see \cite{mullins}. It turns out, that curve diffusion flow is the $H^{-1}$-gradient flow of the length of the curves, see \cite{garcke}. Moreover, the flow is related to the Cahn-Hilliard equation for a degenerate mobility: This equation arises in material science and models the phase separation of a binary alloy, which separates and forms domains mainly filled by a single component. Formal asymptotic expansions suggest that surface diffusion flow is the singular limit of the Cahn-Hilliard equation with a degenerate mobility for the case that the interfacial layer does not intersect the boundary of the domain, see \cite{cahnelliottnovick}. Garcke and Novick-Cohen considered also the situation of an intersection of the interfacial layer with the external boundary and identified formally the sharp interface model, where the interfaces evolve in the two-dimensional case according to the curve diffusion flow and subject to an attachment condition, a $\tfrac{\pi}{2}$-angle condition, and a no flux-condition, see \cite{garckenovick}. They also established a short time existence result for this problem in the case of initial data of class $C^{4 + \alpha}$, see \cite{garckenovick}. This is related to the subject of this work: In the following, we want to prove a short-time existence result for the curve diffusion flow of open curves in the case of rough initial data.

More precisely, we look for a time dependent family of regular curves $\Gamma := \{\Gamma_t\}_{t \geq 0}$ satisfying
\begin{align}
V = - \partial_{ss}\kappa_{\Gamma_t} \label{g1} && \textrm{ on } \Gamma_t, t > 0,
\end{align}
where $V$ is the scalar normal velocity, $\kappa_{\Gamma_t}$ is the scalar curvature of $\Gamma_t$, and $s$ denotes the arc length parameter. We complement the evolution law with the boundary conditions  
\begin{align}
\partial \Gamma_t &\subset \mathbb{R} \times \{0\} && \textrm{ for } t > 0, \label{g11} \\
\measuredangle \left({n}_{\Gamma_t},  \begin{pmatrix} 0 \\ -1\end{pmatrix} \right) &= \pi - \alpha && \textrm{ at } \partial \Gamma_t \textrm{ for } t > 0, \label{g2}\\
\partial_s \kappa_{\Gamma_t} &= 0  && \textrm{ at } \partial \Gamma_t \textrm{ for } t > 0, \label{g3}
\end{align}
where ${n}_{\Gamma_t}$ is the unit normal vector of $\Gamma_t$ and $\alpha \in (0, \pi)$. A sketch is given in Figure \ref{evol}. 
%Furthermore, an initial datum $\Gamma_0$ fulfilling \eqref{g11} and \eqref{g2} will be specified later. 
\begin{center}\vspace{0 cm}
	\scalebox{1}{%% Creator: Inkscape 0.48.3.1, www.inkscape.org
%% PDF/EPS/PS + LaTeX output extension by Johan Engelen, 2010
%% Accompanies image file '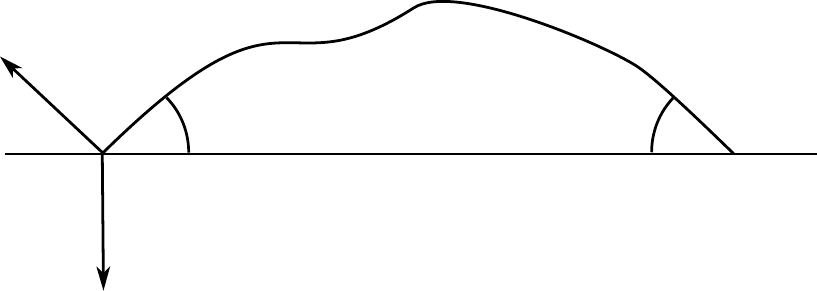' (pdf, eps, ps)
%%
%% To include the image in your LaTeX document, write
%%   \input{<filename>.pdf_tex}
%%  instead of
%%   \includegraphics{<filename>.pdf}
%% To scale the image, write
%%   \def\svgwidth{<desired width>}
%%   \input{<filename>.pdf_tex}
%%  instead of
%%   \includegraphics[width=<desired width>]{<filename>.pdf}
%%
%% Images with a different path to the parent latex file can
%% be accessed with the `import' package (which may need to be
%% installed) using
%%   \usepackage{import}
%% in the preamble, and then including the image with
%%   \import{<path to file>}{<filename>.pdf_tex}
%% Alternatively, one can specify
%%   \graphicspath{{<path to file>/}}
%% 
%% For more information, please see info/svg-inkscape on CTAN:
%%   http://tug.ctan.org/tex-archive/info/svg-inkscape
%%
\begingroup%
  \makeatletter%
  \providecommand\color[2][]{%
    \errmessage{(Inkscape) Color is used for the text in Inkscape, but the package 'color.sty' is not loaded}%
    \renewcommand\color[2][]{}%
  }%
  \providecommand\transparent[1]{%
    \errmessage{(Inkscape) Transparency is used (non-zero) for the text in Inkscape, but the package 'transparent.sty' is not loaded}%
    \renewcommand\transparent[1]{}%
  }%
  \providecommand\rotatebox[2]{#2}%
  \ifx\svgwidth\undefined%
    \setlength{\unitlength}{235.26792491bp}%
    \ifx\svgscale\undefined%
      \relax%
    \else%
      \setlength{\unitlength}{\unitlength * \real{\svgscale}}%
    \fi%
  \else%
    \setlength{\unitlength}{\svgwidth}%
  \fi%
  \global\let\svgwidth\undefined%
  \global\let\svgscale\undefined%
  \makeatother%
  \begin{picture}(1,0.35575488)%
    \put(0,0){\includegraphics[width=\unitlength]{Zeichnung1_neu.pdf}}%
    \put(0.83965088,0.17951871){\color[rgb]{0,0,0}\makebox(0,0)[b]{\smash{$\alpha$}}}%
    \put(0.39760169,0.33253572){\color[rgb]{0,0,0}\makebox(0,0)[lb]{\smash{$\Gamma_t$}}}%
    \put(0.1765771,0.17951871){\color[rgb]{0,0,0}\makebox(0,0)[lb]{\smash{$\alpha$}}}%
    \put(0.1595752,0.06050551){\color[rgb]{0,0,0}\makebox(0,0)[lb]{\smash{$ \begin{pmatrix} 0 \\ -1\end{pmatrix}$}}}%
    \put(0.09156762,0.23052438){\color[rgb]{0,0,0}\makebox(0,0)[lb]{\smash{${n}_{\Gamma_t}$}}}%
  \end{picture}%
\endgroup%
}
	\captionof{figure}{Evolution by curve diffusion flow with $\alpha$-angle condition for $\alpha < \tfrac{\pi}{2}$.}
	\label{evol}
\end{center}\vspace{0 cm}

Even though Garcke, Ito, and Kohsaka proved a global existence result for initial data sufficiently close to an equilibrium for a $\tfrac{\pi}{2}$-angle condition in \cite{garitokoh}, Escher, Mayer, and Simonett gave numerical evidence that closed curves in the plane, which are moving according to \eqref{g1} can develop singularities in finite time, cf.\ \cite{eschmaysim}. Indeed, for smooth closed curves driven by \eqref{g1}, Chou provided a sharp criterion for a finite lifespan of the flow in \cite{chou}. Additionally, Chou, see \cite{chou}, and Dzuik, Kuwert, and Schätzle, see \cite{dziuk}, showed that if a solution has a maximal lifespan $T_{max} < \infty$, then the $L_2$-norm of the curvature with respect to the arc length parameter tends to infinity as $T_{max}$ is approached. Moreover, they gave a rate for the blow-up. A blow-up criterion and criteria for global existence for the curve diffusion flow with a free boundary were proven by Wheeler and Wheeler in \cite{wheelerwheeler}. Related results for different flows of open curves were shown by Lin in \cite{lin}, Dall'Acqua and Pozzi in \cite{acquapozzi}, and Dall'Acqua, Lin, and Pozzi in \cite{acqualinpozzi}: The authors consider $L_2$-gradient flows of the bending energy, either with length penalization or with length constraint, for open curves and deduce global existence and subconvergence results.\\

In order to obtain a short time existence result, the following strategy is pursued: For a fixed reference curve and coordinates, we represent the evolving curve, which is in some sense close to the reference curve, by a height function. Thus, the geometric evolution equation \eqref{g1}-\eqref{g3} is reduced to a quasilinear parabolic partial differential equation for the height function on a fixed interval, at least as long as the curve is sufficiently close to the initial curve. The standard approach to attack these kind of problems is a contraction mapping argument: First, the equation is linearized and the function spaces for the solution and the data of the linearized system are chosen such that the linear problem can be solved with optimal regularity and the nonlinear terms are contractive for small times. The key ingredient for solving the linear problem in our situation is a result by Meyries and Schnaubelt on maximal $L_p$-regularity with temporal weights, \cite{meyries}: We deduce that for admissible initial data in $W^{4(\mu - \nicefrac{1}{2})}_{2} (I)$, $\mu \in (\tfrac{7}{8}, 1]$, there exists a unique solution of the linear problem in the temporal weighted parabolic space $W^1_{2, \mu} ((0, T); L_{2} (I) \cap L_{2, \mu} ((0, T); W^{4}_{2} (I))$ for any $T>0$, see Section \ref{pre} for the definitions of the spaces. In order to show that the nonlinear terms are contractive for small times, it is crucial to study the structure of the non-linearities and to establish suitable product estimates in time weighted anisotropic $L_2$-Sobolev spaces of low regularity. This enables us to apply Banach's fixed point theorem and we obtain a unique solution to the partial differential equation. \\

In the following, $\Phi^*([0, 1])$, $\Phi^*:[0, 1] \rightarrow \mathbb{R}^2$, which fulfills suitable boundary conditions, serves as a reference curve and $\rho: [0, T] \times [0, 1] \rightarrow (-d, d)$ is a height function such that $\Gamma$ can be expressed by a curvilinear coordinate system, see Section \ref{rog} for details. The main result reads:
\begin{satz}[Local Well-Posedness for Data Close to a Reference Curve]~\\ \label{local}
  Let $\Phi^*$ and $\eta$ be given such that \eqref{bound1} and \eqref{eta} below are fulfilled, respectively, and let $T_0>0$ and $R_2>0$ such that $\|\mathcal{L}^{-1} \|_{L(\mathbb{E}_{0, \mu} \times \tilde{\mathbb{F}}_{\mu} \times X_{\mu};  \mathbb{E}_{\mu, T_0})}\leq R_2$, cf.\ \eqref{equ} for the Definition of $\mathcal{L}^{-1}$.
	Furthermore, let $\rho_0 \in W_2^{4\left(\mu - \nicefrac{1}{2}\right)} (I)$, with $I = (0, 1)$ and $\mu \in \left(\frac78, 1 \right]$, fulfill the conditions
	\begin{align}
	\|\rho_0\|_{C(\bar{I})} < \frac{K_0}{3}\qquad   \textrm{ and } \qquad  \|\partial_\sigma \rho_0\|_{C(\bar{I})} < \frac{K_1}{3},
	\label{small}
	\end{align}
	and the compatibility condition
	\begin{align}
	\partial_{\sigma} \rho_0 (\sigma) = 0 && \textrm{ for } \sigma \in \{0, 1\},
	\label{komp}
	\end{align}
	where $K_0$, $K_1$ are specified in Lemma \ref{regu}. Moreover, let $R_1>0$ be such that  $\|\rho_0\|_{X_{\mu}} \leq R_1$. 
	Then there exists a $T = T(\alpha, \Phi^*, \eta, R_1, R_2) \in (0,T_0]$   such that the problem 
	\begin{align}
	\rho_t &= - \frac{J(\rho)}{\langle \Psi_q, R \Psi_\sigma \rangle} \Delta (\rho) \kappa(\rho) && \textrm{ for } \sigma \in (0, 1) \textrm{ and } t \in J, \nonumber \\
	\partial_\sigma \rho (t, \sigma) &= 0  && \textrm{ for } \sigma \in \{0, 1\} \textrm{ and } t \in J, \nonumber \\
	\partial_\sigma \kappa(\rho) &= 0  && \textrm{ for } \sigma \in \{0, 1\} \textrm{ and } t \in J \label{prob}
	\end{align}
	possesses a unique solution $\rho \in W^1_{2, \mu} (J; L_{2} (I)) \cap L_{2, \mu} (J; W^{4}_{2} (I))$, $J = (0, T)$ such that $\rho(t)$ satisfies the bounds \eqref{kleinrho} and \eqref{kleindrho} below for all $t \in [0, T]$, and $\rho (\cdot, 0) = \rho_0$ in $ W_2^{4\left(\mu - \nicefrac{1}{2}\right)} (I)$.
\end{satz}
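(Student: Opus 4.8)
The plan is to rewrite \eqref{prob} as a fixed-point equation for the linearization at the reference curve and to solve it with Banach's fixed point theorem in the maximal-regularity space $\mathbb{E}_{\mu, T}$. Linearizing the quasilinear operator $\tfrac{J(\rho)}{\langle \Psi_q, R\Psi_\sigma\rangle}\Delta(\rho)\kappa(\rho)$ at $\rho \equiv 0$ isolates a linear fourth-order operator $\mathcal{L}$ which, together with the two boundary conditions, is boundedly invertible $\mathcal{L}^{-1}\colon \mathbb{E}_{0,\mu}\times \tilde{\mathbb{F}}_\mu \times X_\mu \to \mathbb{E}_{\mu, T_0}$ with norm $\le R_2$, uniformly for $T \le T_0$ by restriction. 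Collecting the remaining terms --- the difference between the frozen leading coefficient $a_\ast$ and the genuine coefficient $a(\rho)$ multiplied by $\partial_\sigma^4\rho$, the lower-order part of the bulk nonlinearity, and the boundary operator $\partial_\sigma\kappa(\rho)$ --- into a map $\mathcal{N}$, the problem reads $\rho = \mathcal{L}^{-1}\big(\mathcal{N}(\rho), \rho_0\big) =: F(\rho)$, with $\rho_0 \in X_\mu = W_2^{4(\mu - \nicefrac{1}{2})}(I)$ as initial datum.

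Next I would fix the domain of $F$. Let $\rho^\ast = \mathcal{L}^{-1}(0, 0, \rho_0)$ solve the purely linear problem, and let $\Sigma$ be the closed ball of radius $r$ around $\rho^\ast$ in $\mathbb{E}_{\mu, T}$ intersected with the constraints \eqref{kleinrho}--\eqref{kleindrho}. The key structural fact is that $\mu > \tfrac{7}{8}$ yields $\gamma := 4(\mu - \nicefrac{1}{2}) > \tfrac{3}{2}$, so that $X_\mu \hookrightarrow C^1(\bar I)$ and hence $\mathbb{E}_{\mu, T} \hookrightarrow C(\bar J; C^1(\bar I))$. Writing $\rho = \rho^\ast + v$ with $v$ of vanishing temporal trace, this embedding together with \eqref{small} and $\rho^\ast(t) \to \rho_0$ in $C^1(\bar I)$ as $t \to 0$ shows that, for $r$ and $T$ small, every $\rho \in \Sigma$ stays so $C^1$-close to $0$ that $J(\rho)$ and $\langle \Psi_q, R\Psi_\sigma\rangle$ remain nondegenerate and all coefficient functions are smooth and uniformly bounded; the same embedding propagates \eqref{small} to the pointwise bounds \eqref{kleinrho}--\eqref{kleindrho} along the flow. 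In particular $\mathcal{N}(\rho)$ is well-defined and lands in $\mathbb{E}_{0,\mu}\times \tilde{\mathbb{F}}_\mu$.

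The core is to show that $F$ is a contractive self-map of $\Sigma$ for $T$ small. For the self-map property I would bound $\|F(\rho) - \rho^\ast\|_{\mathbb{E}_{\mu, T}} \le R_2\big(\|\mathcal{N}_{\mathrm{bulk}}(\rho)\|_{\mathbb{E}_{0,\mu}} + \|\mathcal{N}_{\mathrm{bdry}}(\rho)\|_{\tilde{\mathbb{F}}_\mu}\big)$ and treat the top-order remainder $(a_\ast - a(\rho))\partial_\sigma^4\rho$ by splitting $\partial_\sigma^4\rho = \partial_\sigma^4\rho^\ast + \partial_\sigma^4 v$:
\[
\big\|(a_\ast - a(\rho))\partial_\sigma^4\rho\big\|_{L_{2,\mu}(J;L_2(I))} \le \|a_\ast - a(\rho)\|_{C(\bar J; C(\bar I))}\,\|v\|_{\mathbb{E}_{\mu, T}} + \|a_\ast - a(\rho)\|_{C(\bar J; C(\bar I))}\,\|\partial_\sigma^4\rho^\ast\|_{L_{2,\mu}(J;L_2(I))},
\]
where the first summand carries the small coefficient factor (by $C^1$-smallness) times $\|v\|_{\mathbb{E}_{\mu, T}} \le r$, while the second tends to $0$ as $T \to 0$ by absolute continuity of the norm over $(0, T)$ of the fixed function $\rho^\ast$. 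The lower-order and boundary contributions are estimated by product estimates in the time-weighted anisotropic spaces, each carrying a positive power $T^\varepsilon$. Applying the same splitting to differences $\mathcal{N}(\rho_1) - \mathcal{N}(\rho_2)$ and using local Lipschitz bounds for the coefficient functions yields $\|F(\rho_1) - F(\rho_2)\|_{\mathbb{E}_{\mu, T}} \le C(r, T)\|\rho_1 - \rho_2\|_{\mathbb{E}_{\mu, T}}$ with $C(r, T) \to 0$ as $r, T \to 0$, so after fixing $r$ small and then $T$ small, $F$ is a contraction and its unique fixed point is the desired solution.

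The main obstacle is the construction of the product estimates at the low regularity $\gamma \in (\tfrac{3}{2}, 2]$. Since the solution is only $C^1$ in space, the curvature $\kappa(\rho)$ and the ingredients of $\Delta(\rho)\kappa(\rho)$ live in Sobolev spaces of order below $2$, so the nonlinear terms are genuine products of functions of low, fractional, and anisotropic space--time regularity, for which naive multiplication estimates are unavailable. I expect the delicate points to be, first, verifying that $\mathcal{N}(\rho)$ indeed maps into $\mathbb{E}_{0,\mu}\times \tilde{\mathbb{F}}_\mu$ at this regularity, and, second, extracting the decisive smallness in $T$: this rests on the vanishing temporal trace of the relevant differences, so that the time-weighted embeddings of $\,_0W^s_{2,\mu}(J; \cdot)$ gain a factor $T^\varepsilon$ as $T \to 0$, and tracking this gain through every product is the technical heart of the argument.
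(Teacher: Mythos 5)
Your overall architecture --- maximal $L_{2,\mu}$-regularity for a linearized problem, a ball in $\mathbb{E}_{\mu,T}$ centred at a lift of $\rho_0$, product estimates in the time-weighted anisotropic spaces, and smallness in $T$ extracted from zero-trace embeddings --- is exactly the paper's. The gap is in where you freeze the leading coefficient. You linearize at $\rho\equiv 0$, so the top-order remainder you must absorb into the fixed point is $\left(a(\sigma,0,0)-a(\sigma,\rho,\partial_\sigma\rho)\right)\partial_\sigma^4\rho$, and your contraction estimate needs $\|a(\sigma,0,0)-a(\sigma,\rho,\partial_\sigma\rho)\|_{C(\bar J\times\bar I)}$ to be small. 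It is not: the hypotheses \eqref{small} only bound $\|\rho_0\|_{C(\bar I)}$ and $\|\partial_\sigma\rho_0\|_{C(\bar I)}$ by the \emph{fixed} thresholds $K_0/3$, $K_1/3$ of Lemma \ref{regu}, which guarantee non-degeneracy of $J(\rho)$ but not closeness of $a(\rho)=J(\rho)^{-4}$ to $a(0)$. For small $T$ every competitor $\rho$ is $C^1$-close to $\rho_0$, not to $0$, so $\|a(0)-a(\rho)\|_{\infty}$ tends to the order-one quantity $\|a(0)-a(\rho_0)\|_{C(\bar I)}$, and the difference term $\bigl(a(0)-a(\rho_2)\bigr)\partial_\sigma^4(\rho_1-\rho_2)$ contributes $R_2\,\|a(0)-a(\rho_0)\|_{\infty}\,\|\rho_1-\rho_2\|_{\mathbb{E}_{\mu,T}}$ to the Lipschitz constant of your fixed-point map --- a quantity that neither $r\to 0$ nor $T\to 0$ makes less than one. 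Your phrase ``stays so $C^1$-close to $0$'' is where this slips in; it would only be justified under the strictly stronger hypothesis that $\|\rho_0\|_{C^1(\bar I)}$ itself is small depending on $R_2$.

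The paper avoids this by freezing the coefficients at the initial datum: $\mathcal{A}=a(\sigma,\rho_0,\partial_\sigma\rho_0)D^4$ and likewise $\mathcal{B}_2$, cf.\ \eqref{operator} and \eqref{coeffs}. Then the relevant coefficient difference is $a(\rho(t))-a(\rho_0)$, which is $O(T^{\bar\alpha})$ uniformly on $\mathcal{B}_{K,T}$ by the embedding $\mathbb{E}_{\mu,T}\hookrightarrow C^{\bar\alpha}(\bar J;C^1(\bar I))$ of Lemma \ref{unabneu} together with the shared temporal trace $\rho_{|t=0}=\rho_0$; this is precisely the source of the factor $T^{\alpha}$ in the estimates of the terms $I$ and $II$ in the proof of Lemma \ref{contraction}. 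The price is a linear operator with merely $BUC$ coefficients, which is why the paper invokes the Meyries--Schnaubelt theorem with its weak structural assumptions rather than constant-coefficient theory; your choice buys a nicer linear operator but loses the contraction. Two smaller points: Banach's theorem only gives uniqueness inside your ball $\Sigma$, whereas the theorem asserts uniqueness among all solutions satisfying \eqref{kleinrho}--\eqref{kleindrho}, which the paper obtains by an enlarge-the-ball-and-restart argument at the end of Lemma \ref{cml}; and the boundary nonlinearity must be estimated in $W^{\nicefrac{1}{8}}_{2,\mu}(J)$, for which the dedicated product lemmas of Lemma \ref{ban-alg} are needed --- ``each carrying a positive power $T^\varepsilon$'' is the right intuition but is exactly the part that has to be proved.
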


We give a sketch of the situation in Figure \ref{sketch}. 
\begin{center} \vspace{0,5 cm}
	%TODO to include graphics like this, create them in a separate document and include them as PDFs
	%In case you want to create graphics with TikZ, see figlasso.tex for an example
	%% Creator: Inkscape 0.48.3.1, www.inkscape.org
%% PDF/EPS/PS + LaTeX output extension by Johan Engelen, 2010
%% Accompanies image file '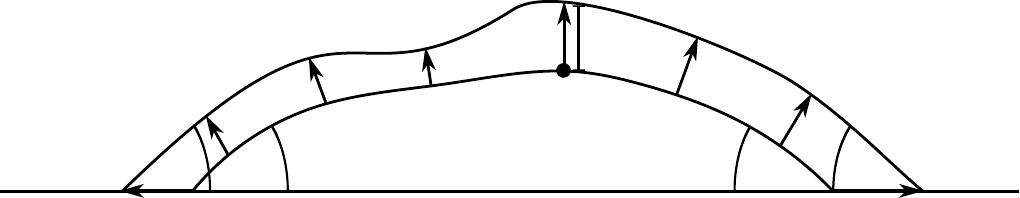' (pdf, eps, ps)
%%
%% To include the image in your LaTeX document, write
%%   \input{<filename>.pdf_tex}
%%  instead of
%%   \includegraphics{<filename>.pdf}
%% To scale the image, write
%%   \def\svgwidth{<desired width>}
%%   \input{<filename>.pdf_tex}
%%  instead of
%%   \includegraphics[width=<desired width>]{<filename>.pdf}
%%
%% Images with a different path to the parent latex file can
%% be accessed with the `import' package (which may need to be
%% installed) using
%%   \usepackage{import}
%% in the preamble, and then including the image with
%%   \import{<path to file>}{<filename>.pdf_tex}
%% Alternatively, one can specify
%%   \graphicspath{{<path to file>/}}
%% 
%% For more information, please see info/svg-inkscape on CTAN:
%%   http://tug.ctan.org/tex-archive/info/svg-inkscape
%%
\begingroup%
  \makeatletter%
  \providecommand\color[2][]{%
    \errmessage{(Inkscape) Color is used for the text in Inkscape, but the package 'color.sty' is not loaded}%
    \renewcommand\color[2][]{}%
  }%
  \providecommand\transparent[1]{%
    \errmessage{(Inkscape) Transparency is used (non-zero) for the text in Inkscape, but the package 'transparent.sty' is not loaded}%
    \renewcommand\transparent[1]{}%
  }%
  \providecommand\rotatebox[2]{#2}%
  \ifx\svgwidth\undefined%
    \setlength{\unitlength}{293.55bp}%
    \ifx\svgscale\undefined%
      \relax%
    \else%
      \setlength{\unitlength}{\unitlength * \real{\svgscale}}%
    \fi%
  \else%
    \setlength{\unitlength}{\svgwidth}%
  \fi%
  \global\let\svgwidth\undefined%
  \global\let\svgscale\undefined%
  \makeatother%
  \begin{picture}(1,0.19405713)%
    \put(0,0){\includegraphics[width=\unitlength]{param.pdf}}%
    \put(0.57541667,0.13683329){\color[rgb]{0,0,0}\makebox(0,0)[lb]{\smash{$\rho(t, \sigma)$}}}%
    \put(0.36497685,0.16992385){\color[rgb]{0,0,0}\makebox(0,0)[lb]{\smash{$\Gamma_t$}}}%
    \put(0.23568342,0.02423802){\color[rgb]{0,0,0}\makebox(0,0)[lb]{\smash{$\alpha$}}}%
    \put(0.52223256,0.08427848){\color[rgb]{0,0,0}\makebox(0,0)[lb]{\smash{$\Phi^*(\sigma)$}}}%
    \put(0.74640134,0.02154781){\color[rgb]{0,0,0}\makebox(0,0)[lb]{\smash{$\alpha$}}}%
    \put(0.16218341,0.02510184){\color[rgb]{0,0,0}\makebox(0,0)[lb]{\smash{$\alpha$}}}%
    \put(0.8308454,0.02315531){\color[rgb]{0,0,0}\makebox(0,0)[lb]{\smash{$\alpha$}}}%
  \end{picture}%
\endgroup%

	\captionof{figure}{Representation of a curve by a reference curve $\Phi*$, curvilinear coordinates, and a height function $\rho(t, \sigma)$.}
	\label{sketch}
\end{center}

We use the time weighted approach, as it enables us to apply the result to initial data with flexible regularity. Moreover, we can directly exploit the smoothing properties of parabolic equations, see Chapter 2 in \protect{\cite{meyries}}.

Note that in this work, we establish short time existence for curves which can be described as a graph over the reference curve with a height function, which is small in some sense. The corresponding result which allows for starting the flow for a fixed initial curve can be found in Theorem 4.1.3 in \protect{\cite{butzdiss}} and \cite{butzprint2}, where it is also used to establish a blow-up criterion for the geometric evolution equation \eqref{g1} -\eqref{g3}, cf.\ Theorem 4.1.4 in \protect{\cite{butzdiss}}. The latter result requires that the flow starts for initial data which are less regular than $W^2_2(I)$. \\

This article is organized as follows: In Section \ref{pre}, we summarize some notation and preliminary results. In the following, we present the main steps of the local well-posedness result for rough initial data in $W^{4(\mu - \nicefrac{1}{2})}_{2} (I)$ for $\mu \in (\tfrac{7}{8}, 1]$, cf.\ Theorem \ref{local}: In Section \ref{rog} we derive the partial differential equation (PDE) which corresponds to the geometric evolution equation (GEE) \eqref{g1} -\eqref{g3}. Then, in Section \ref{line} the linear problem is treated. Afterwards, we prove the main result in Section \ref{cont}.

%%%%%%%%%%%%%%%%%%%%%%%%%%%%%%%%%%%%%%%%%%%%%%%%%%%%%%%%%%%%%%%%%%%%%%%%%%%%%%%%%%%%%%%%%%%%%%%%
%%%%%%%%%%%%%%%%%%%%%%%%%%%%%%%%%%%%%%%%%%%%%%%%%%%%%%%%%%%%%%%%%%%%%%%%%%%%%%%%%%%%%%%%%%%%%%%%
%%%%%%%%%%%%%%%%%%%%%%%%%%%%%%%%%%%%%%%%%%%%%%%%%%%%%%%%%%%%%%%%%%%%%%%%%%%%%%%%%%%%%%%%%%%%%%%%
\section{Preliminaries and Fundamental Mathematical Tools} \label{pre} 

In this section, we want to present some preliminary results on fractional Sobolev spaces and maximal $L_2$-regularity. Additionally, a geometrical estimate is given.\\ 

\subsection{Fractional Sobolev Spaces and Some Properties} 

The following spaces will be crucial for our setting. Most facts and definitions stated in the first part of this section are derived in \protect{\cite{meyries_inter}}. For more results about these spaces, e.g.\ dense subsets, extension operators, trace theorems and embeddings, the reader is referred to \cite{meydiss} and \protect{\cite{meyries_inter}}. \\
General properties of real and complex interpolation theory can be found in \protect{\cite{lunardi}} or \protect{\cite{triebel}}.

\begin{definition} [Weighted Lebesgue Space]
	\label{defleb}
	Let $J = (0, T)$, $0 < T \leq \infty$ and $E$ be a Banach space. For $1 < p < \infty$ and $\mu \in \left( \frac{1}{p}, 1 \right]$ the \textbf{weighted Lebesgue space} is given by
	\begin{align*}
	L_{p, \mu}(J; E) := \left\{u: J \to E \textrm{ is strongly measurable}: \|u\|_{L_{p, \mu}(J; E)} <  \infty \right\},
	\end{align*} 
	where
	\begin{align*}
	\|u\|_{L_{p, \mu}(J; E)} := \left\| \left[t \mapsto t^{1- \mu} u(t) \right] \right\|_{L_{p}(J; E)} = \left( \int_{J} t^{(1-\mu)p} \|u(t)\|_E^{p} \dd t \right)^{\frac{1}{p}}.
	\end{align*}
\end{definition}

\begin{rem} \label{nummer}
	\begin{enumerate}
		\item $(L_{p, \mu}(J; E), \|u\|_{L_{p, \mu}(J; E)})$ is a Banach space.
		\item One easily sees that for $T < \infty$ it follows
		\begin{align*}
		L_{p}(J; E) \hookrightarrow L_{p, \mu}(J; E).
		\end{align*}
		This does not hold true for $T = \infty$.
		\item We have $L_{p, \mu}((0, T); E) \subset L_{p}((\tau, T); E)$ for $\tau \in (0, T)$.
		\item For $\mu = 1$ it holds $L_{p,1}(J; E) = L_{p}(J; E)$.
	\end{enumerate}
\end{rem}

Moreover, we define associated weighted Sobolev spaces.
\begin{definition} [Weighted Sobolev Space]
	\label{defsobo}
	Let $J = (0, T)$, $0 < T \leq \infty$ and $E$ be a Banach space. For $1 \leq p < \infty$, $k \in \mathbb{N}_0$, and $\mu \in \left( \frac{1}{p}, 1 \right]$ the \textbf{weighted Sobolev space} is given by
	\begin{align*}
	W^k_{p, \mu}(J; E) = H^k_{p, \mu}(J; E) := \left\{u \in W^k_{1, \textrm{loc}} (J; E): u^{(j)} \in L_{p, \mu}(J; E) \textrm{ for } \{0, \cdots, k\}  \right\}
	\end{align*}
	for $k \neq 0$, where $u^{(j)} := \big(\frac{\dd}{\dd t} \big)^j u$, and we set $W^0_{p, \mu}(J; E) := L_{p, \mu}(J; E)$. We equip it with the norm
	\begin{align*}
	\|u\|_{W^k_{p, \mu}(J; E)} := \left(\sum_{j=0}^{k} \left\|u^{(j)} \right\|^p_{L_{p, \mu}(J; E)} \right)^{\frac{1}{p}}.
	\end{align*}
\end{definition}

\begin{rem}
	$(W^k_{p, \mu}(J; E), \|u\|_{W^k_{p, \mu}(J; E)})$ is a Banach space, see Theorem in Section 3.2.2 of \protect{\cite{triebel}}.
\end{rem}

In the following, we introduce a generalization of the usual Sobolev spaces by the means of interpolation theory. By $(\cdot,\cdot)_{\theta, p}$ and $(\cdot,\cdot)_{[\theta]}$ we denote real and complex interpolation functor, respectively.
\begin{definition} [Weighted Slobodetskii Space, Weighted Bessel Potential Space]
	\label{defslobo} \label{defbessel}
	Let $J = (0, T)$, $0 < T \leq \infty$ and $E$ be a Banach space. For $1 \leq p < \infty$, $s \in \mathbb{R}^+ \backslash \mathbb{N}$, and $\mu \in \left( \frac{1}{p}, 1 \right]$ the \textbf{weighted Slobodetskii space} and the \textbf{weighted Bessel potential space}, respectively, are given by
	\begin{align*}
	W^s_{p, \mu} (J; E) &:= \left(W^{\lfloor s \rfloor}_{p, \mu}(J; E), W^{\lfloor s \rfloor + 1}_{p, \mu}(J; E)\right)_{s - \lfloor s \rfloor, p}, \\
	H^s_{p, \mu} (J; E) &:= \left(W^{\lfloor s \rfloor}_{p, \mu}(J; E), W^{\lfloor s \rfloor + 1}_{p, \mu}(J; E)\right)_{[s - \lfloor s \rfloor]}.
	\end{align*}
\end{definition}

\begin{rem} \label{nummer1}
	\begin{enumerate} 
		\item Both spaces are Banach spaces by interpolation theory, cf.\ Proposition 1.2.4 in \protect{\cite{lunardi}} and the Theorem in 1.9.1 in \protect{\cite{triebel}}. 
		\item We have $W^s_{p, 1}(J; E) =  W^s_{p}(J; E)$ and $H^s_{p, 1}(J; E) =  H^s_{p}(J; E)$ for all $s \geq 0$.
		\item For $p\in (1, \infty)$ we obtain by Lemma 2.1 in \protect{\cite{meyries_inter}} that the trace $u \mapsto u^{(j)}(0)$ is continuous from $W^k_{p, \mu}(J; E)$ to $E$ for all $j \in \{0, \dots, k-1\}$. Thus, for $k \in \mathbb{N}$ we can define
		\begin{align*}
		{}_0 W^k_{p, \mu}(J; E) = {}_0 H^k_{p, \mu}(J; E) := \left\{u \in W^k_{p, \mu}(J; E) : u^{(j)}(0) = 0 \textrm{ for } j \in \{0, \dots, k-1\} \right\},
		\end{align*}
		which are Banach spaces with the norm of $W^k_{p, \mu}(J; E)$. Moreover, we set for convenience
		\begin{align*}
		{}_0 W^0_{p, \mu}(J; E) = {}_0 H^0_{p, \mu}(J; E) := L_{p, \mu}(J; E).
		\end{align*}
		\item By Proposition 2.10 in \protect{\cite{meyries_inter}}, it follows for $k + 1 - \mu + \nicefrac{1}{p} < s < k + 2 - \mu + \nicefrac{1}{p}$ with $k \in \mathbb{N}_0$
		\begin{align*}
		W^s_{p, \mu} (J; E) &\hookrightarrow BUC^k (\bar{J}; E),\\
		H^s_{p, \mu} (J; E) &\hookrightarrow BUC^k (\bar{J}; E).
		\end{align*}
		If additionally one replaces the spaces $W^s_{p, \mu} (J; E)$ and $H^s_{p, \mu} (J; E)$ by the ${}_0 W^s_{p, \mu} (J; E)$ and ${}_0 H^s_{p, \mu} (J; E)$, respectively, and $s \in [0, 2]$, then the operator norms of the embeddings do not depend on $J$.
		\item We can define the corresponding fractional order spaces ${}_0 W^s_{p, \mu}(J; E)$ and ${}_0 H^s_{p, \mu}(J; E)$ ana\-logously to Definition \ref{defslobo}. By Proposition 2.10 in \protect{\cite{meyries_inter}}, we have for $p \in (1, \infty)$ the characterization
		\begin{align*}
		{}_0 W^s_{p, \mu}(J; E) &= \left\{u \in W^s_{p, \mu}(J; E) : u^{(j)}(0) = 0 \textrm{ for } j \in \{0, \dots, k\} \right\}, \\
		{}_0 H^s_{p, \mu}(J; E) &= \left\{u \in W^s_{p, \mu}(J; E) : u^{(j)}(0) = 0 \textrm{ for } j \in \{0, \dots, k\} \right\},
		\end{align*}
		if $k + 1 - \mu + \nicefrac1p < s < k + 1 + (1 - \mu + \nicefrac1p)$, $k \in \mathbb{N}_0$. 
		%\re{1.1.8 in meyries diss}
		\item By equation (2.7) and (2.8) in \protect{\cite{meyries_inter}}, we have for $s = \lfloor s \rfloor + s^*$
		\begin{align*}
		{}_0 W^s_{p, \mu}(J; E) &= \left\{u \in {}_0 W^{\lfloor s \rfloor}_{p, \mu}(J; E) : u^{(\lfloor s \rfloor)} \in {}_0 W^{s^*}_{p, \mu}(J; E) \right\}, \\
		W^s_{p, \mu}(J; E) &= \left\{u \in W^{\lfloor s \rfloor}_{p, \mu}(J; E) : u^{(\lfloor s \rfloor)} \in W^{s^*}_{p, \mu}(J; E) \right\},
		\end{align*}
		where the natural norms are equivalent with constants independent of $J$.
		\item By Proposition 2.10 in \protect{\cite{meyries_inter}}, we obtain that $W^s_{p, \mu}(J; E) = {}_0 W^s_{p, \mu}(J; E)$ for $1 - \mu + \nicefrac{1}{p} > s > 0$.
		\item By interpolation theory, see (2.6) in \protect{\cite{meyries_inter}}, we have the following representation of the Slobodetskii space: For $s \in (0, 1)$ it holds
		\begin{align*}
		W^s_{p, \mu} (J; E) =& \left\{ u \in L_{p, \mu} (J; E): [u]_{W^s_{p, \mu} (J; E)} < \infty \right\},
		\end{align*}
		where 
		\begin{align*}
		[u]_{W^s_{p, \mu} (J; E)} := \left( \int_0^T \int_0^t \tau^{(1 - \mu)p} \frac{\|u(t) - u(\tau)\|_E^p}{|t - \tau|^{1 + sp}} \dd \tau \dd t \right) ^{\frac{1}{p}}.
		\end{align*}
		Then, the norm given by
		\begin{align*}
		\|u\|_{W^s_{p, \mu} (J; E)} := \|u\|_{L_{p, \mu} (J; E)} + [u]_{W^s_{p, \mu} (J; E)} 
		\end{align*}
		is equivalent to the one induced by interpolation.
		\item If $E = \mathbb{R}$ is the image space, we omit it, e.g.\ $W^s_{p, \mu} (J) := W^s_{p, \mu} (J; \mathbb{R})$.
	\end{enumerate}
\end{rem}

\subsection{Embeddings with Uniform Operator Norms} 

The following proposition shows that the operator norms in Theorem \ref{embl1} are uniform in time for all $0 < T \leq T_0 < \infty$ if one uses a suitable norm.
\begin{prop} \label{unab}
	Let $0 < T_0 < \infty$ be fixed and $J = (0, T)$ for $0 < T \leq T_0$. Moreover, let $\mu \in \left(\frac{1}{p}, 1 \right]$ and $E$ be a Banach space. We set for $ s > 1 - \mu + \frac{1}{p}$
	\begin{align}
	\|\rho\|^{'}_{W^{s}_{p, \mu}(J; E)} := \|\rho\|_{W^{s}_{p, \mu}(J; E)} + \|\rho_{| t = 0}\|_{E}. 
	\label{strichnorm}
	\end{align}
	\begin{enumerate}
		\item{\label{unab0} Let $1 < p < q < \infty$, $2 \geq s> \tau \geq 0$, and $s - \frac{1}{p} > \tau - \frac{1}{q}$. \\ 
			Then $W^{s}_{p, \mu}(J; E) \hookrightarrow W^{\tau}_{q, \mu}(J; E)$ with the estimate
			\begin{align*}
			\|\rho\|_{W^{\tau}_{q, \mu}(J; E)} \leq C(T_0) \|\rho\|^{'}_{W^{s}_{p, \mu}(J; E)} && \textrm{ for } s > 1 - \mu + \frac{1}{p}, \\
			\|\rho\|_{W^{\tau}_{q, \mu}(J; E)} \leq C(T_0) \|\rho\|_{W^{s}_{p, \mu}(J; E)} && \textrm{ for } s < 1 - \mu + \frac{1}{p}.
			\end{align*}}
		\item{\label{unab0a} Let $1 < p < q < \infty$, $2 \geq s> \tau \geq 0$, and $s - (1 - \mu) - \frac{1}{p} > \tau - \frac{1}{q}$. \\ 
			Then
			$W^{s}_{p, \mu}(J; E) \hookrightarrow W^{\tau}_{q}(J; E)$ with the estimate
			\begin{align*}
			\|\rho\|_{W^{\tau}_{q}(J; E)} \leq C(T_0) \|\rho\|^{'}_{W^{s}_{p, \mu}(J; E)} && \textrm{ for } s > 1 - \mu + \frac{1}{p}, \\
			\|\rho\|_{W^{\tau}_{q}(J; E)} \leq C(T_0) \|\rho\|_{W^{s}_{p, \mu}(J; E)} && \textrm{ for } s < 1 - \mu + \frac{1}{p}.
			\end{align*}}
		\item{\label{unab0b} Let $1 < p < \infty$, $2 \geq s > 1 - \mu + \frac{1}{p}$, and $\alpha \in (0, 1)$.\\ 
			Then $W^{s}_{p, \mu}(J; E) \hookrightarrow C^{\alpha}(\bar{J}; E)$ for $s - \left(1 - \mu\right) + \frac{1}{p}  > \alpha > 0$ with the estimate
			\begin{align*}
			\|\rho\|_{C^{\alpha}(\bar{J}; E)} \leq C(T_0) \|\rho\|^{'}_{W^{s}_{p, \mu}(J; E)}.
			\end{align*}}
	\end{enumerate}
	Each of the constants $C$ does not depend on $T$.
\end{prop}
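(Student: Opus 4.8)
The three assertions are uniformity--in--$T$ refinements of the plain embeddings of Theorem~\ref{embl1}: on any \emph{fixed} interval these inclusions already hold with some constant, and the only new content is that the constant can be chosen independently of $T\in(0,T_0]$. My plan is to transfer a fixed--interval embedding to $(0,T)$ by means of the time rescaling $u\mapsto u_T$, $u_T(t):=u(Tt)$, which is an isomorphism between $W^s_{p,\mu}((0,T);E)$ and $W^s_{p,\mu}((0,1);E)$, and to keep careful track of the powers of $T$ it produces. A direct computation from Definition~\ref{defleb} and the seminorm representation in Remark~\ref{nummer1} gives, for every order $\sigma\in[0,2]$, the identity $\|u_T^{(\sigma)}\|_{L_{p,\mu}((0,1);E)}=T^{\sigma-(1-\mu)-\frac1p}\,\|u^{(\sigma)}\|_{L_{p,\mu}((0,T);E)}$, and the analogous one for the fractional Slobodetskii seminorms. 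Hence the temporal weight shifts the effective smoothness by $1-\mu$; this is exactly why the hypotheses are phrased through the shifted dimensions $s-(1-\mu)-\tfrac1p$ on the source and $\tau-(1-\mu)-\tfrac1q$, $\tau-\tfrac1q$, respectively $\alpha$ on the targets, and why in part~\ref{unab0}, where source and target carry the same weight $\mu$, the shift $1-\mu$ cancels and only $s-\tfrac1p>\tau-\tfrac1q$ remains.

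The top--order terms transfer immediately. Applying the fixed--interval embedding to $u_T$ and rescaling back, the leading contribution is accompanied by a factor $T^{d}$, where $d=(s-\tfrac1p)-(\tau-\tfrac1q)$ in part~\ref{unab0}, $d=(s-(1-\mu)-\tfrac1p)-(\tau-\tfrac1q)$ in part~\ref{unab0a}, and $d=(s-(1-\mu)-\tfrac1p)-\alpha$ in part~\ref{unab0b}. In each case the admissibility condition is precisely $d>0$, so $T^{d}\le T_0^{d}$ and the top order transfers with a constant depending only on $T_0$.

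The delicate point, and the reason the primed norm \eqref{strichnorm} is needed, is the uniform control of the \emph{lower}--order terms as $T\to0$: for these the rescaling produces \emph{negative} powers of $T$, and indeed the plain $W^s_{p,\mu}((0,T);E)$--norm of a constant--in--time (or, once $s>1$, a linear) profile tends to $0$ faster than its supremum-- or H\"older--footprint, so a naive constant degenerates. I would remove this difficulty by reducing to vanishing initial trace. If $s<1-\mu+\tfrac1p$, then $W^s_{p,\mu}=\,{}_0W^s_{p,\mu}$ by Remark~\ref{nummer1}, so $\rho$ already has vanishing trace; extending it by zero to the fixed interval $(0,T_0)$---an operation whose norm is independent of the interval for the ${}_0$--spaces with $s\in[0,2]$, again by Remark~\ref{nummer1}---then applying the fixed--interval embedding of Theorem~\ref{embl1} and finally restricting (restriction has norm $\le1$) yields the plain--norm estimate with a $T$--independent constant. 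If $s>1-\mu+\tfrac1p$ the trace $a:=\rho|_{t=0}\in E$ exists, and I split $\rho=\mathcal{E}a+w$, where $\mathcal{E}a$ is a fixed, time--independent extension of the initial value satisfying $\|\mathcal{E}a\|_{W^s_{p,\mu}((0,T);E)}\lesssim\|a\|_E$ uniformly in $T\le T_0$ (for the constant extension the short--interval weight even produces a favourable power of $T$), so that $w:=\rho-\mathcal{E}a$ lies in ${}_0W^s_{p,\mu}$ whenever $1-\mu+\tfrac1p<s<2-\mu+\tfrac1p$. The zero--trace part $w$ is treated as above and $\mathcal{E}a$ is estimated directly; the summand $\|\rho|_{t=0}\|_E$ in \eqref{strichnorm} is exactly this trace contribution. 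Once $s$ passes the next trace level $2-\mu+\tfrac1p$, one peels off the first--order initial trace in the same manner.

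I expect the whole difficulty to be concentrated in this last reduction, so the hard part is purely the lower--order and trace bookkeeping: one must verify, with constants independent of $T$, that subtracting the extended trace lands in ${}_0W^s_{p,\mu}$ with norm controlled by $\|\rho\|_{W^s_{p,\mu}}+\|\rho|_{t=0}\|_E$, and that extension by zero is bounded uniformly in $T$ on the ${}_0$--spaces for $s\in[0,2]$---the property quoted from Remark~\ref{nummer1}. Granting these two facts, the top--order matching of the second paragraph closes all three estimates at once, and recording in each step whether the primed or the plain norm is used reproduces the two alternatives $s\gtrless1-\mu+\tfrac1p$ listed in every part.
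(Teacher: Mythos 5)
Your strategy is essentially the paper's: the proof cited from \protect{\cite{butzdiss}} likewise reduces to the zero-trace case by subtracting a suitable extension of the initial value and then invokes Theorem \ref{embl1}, whose constants for the ${}_0W$-spaces with $s\in[0,2]$ are uniform in $T\le T_0$; your identification of the extra summand $\|\rho_{|t=0}\|_E$ in \eqref{strichnorm} as exactly the trace contribution, and the observation that for $s<1-\mu+\nicefrac1p$ no trace exists so the plain norm suffices, both match the intended argument. The rescaling computation in your second paragraph is dispensable scaffolding once the zero-trace reduction is in place, but it does no harm.

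One point is left genuinely open: for $s>2-\mu+\nicefrac1p$ (which is possible here, since $2-\mu+\nicefrac1p<2$) the function $\rho$ carries a second trace $\partial_t\rho_{|t=0}$, and subtracting only the constant extension of $\rho_{|t=0}$ does \emph{not} land you in ${}_0W^s_{p,\mu}(J;E)$. Your remark that one ``peels off the first-order initial trace in the same manner'' would force the quantity $\|\partial_t\rho_{|t=0}\|_E$ into the estimate, which the primed norm \eqref{strichnorm} does not control, and there is no $T$-uniform bound of this trace by $\|\rho\|_{W^s_{p,\mu}}+\|\rho_{|t=0}\|_E$ (test $\rho(t)=t$). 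So either restrict to $1-\mu+\nicefrac1p<s<2-\mu+\nicefrac1p$ --- which covers every application of Proposition \ref{unab} in this paper --- or supply a separate argument for the top range; as written, that case is not closed by your scheme.
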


\begin{rem} \label{afterunab}
	Based on item \ref{unab0b}, we can also prove the following statement:
	Let $1 < p < \infty$, $k \in \mathbb{N}$. Then $W^{s}_{p}(J; E) \hookrightarrow C^{k, \alpha}(\bar{J}; E)$ for $s - \nicefrac{1}{p} > k + \alpha > 0$ with the estimate
	\begin{align*}
	\|\rho\|_{C^{k, \alpha}(\bar{J}; E)} \leq C \|\rho\|_{W^{s}_{p}(J; E)},
	\end{align*}
	where $C$ depends on $J$. \\
	
	In order to show this, we use the characterization of Slobodetskii spaces in Lemma 1.1.8 in \protect{\cite{meydiss}}. Thus, we can apply the reasoning of the proof of \ref{unab0b} for $\mu = 1$ to $\partial^m_\sigma f \in W^{s-k}_{p}(J; E)$ for $m < s$, since $s - k > 0$.
\end{rem}

The proofs can be found in 2.1.15 in \protect{\cite{butzdiss}}: By using suitable extensions the claims can be reduced to the zero-trace case and the following embedding theorem for fractional Sobolev Spaces can be applied:
\begin{satz} 
	\label{embl1}
	Let $J = (0, T)$ be finite, $1 < p < q < \infty$, $\mu \in \left(\frac{1}{p}, 1\right]$ and $s > \tau \geq 0$ and $E$ be a Banach space. Then
	\begin{align}
	& W^{s}_{p, \mu}(J) \hookrightarrow W^{\tau}_{q, \mu}(J) &&\textrm{ holds if }&& s - \frac{1}{p} > \tau - \frac{1}{q}. \label{emb} \\
	& W^{s}_{p, \mu}(J; E) \hookrightarrow W^{\tau}_{q}(J; E) &&\textrm{ holds if }&& s - (1 - \mu) - \frac{1}{p} > \tau - \frac{1}{q}. \label{emb1}
	\end{align}
	These embeddings remain true if one replaces the $W$-spaces by the $H$-, the ${}_0 W$- and the ${}_0 H$-spaces, respectively. In the two latter cases, restricting to $s \in [0, 2]$, for given $T_0 > 0$ the embeddings hold with a uniform constant for all $0 < T \leq T_0$.
\end{satz}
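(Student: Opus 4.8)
The plan is to recognize both \eqref{emb} and \eqref{emb1} as instances of the sharp one-dimensional Sobolev embedding for spaces with a \emph{power weight}, and then to reach every fractional order and all four scales by interpolation. By Definition~\ref{defleb}, $\|u\|_{L_{p,\mu}(J)}=\|u\|_{L_p(J,w)}$ with $w(t)=t^{(1-\mu)p}$, so $L_{p,\mu}$ is the power-weighted space with weight exponent $\gamma:=(1-\mu)p$, while the target $L_{q,\mu}$ carries $\delta:=(1-\mu)q$. The hypothesis $\mu\in(\tfrac1p,1]$ means precisely $\gamma\in[0,p-1)$, i.e.\ $w$ is a Muckenhoupt $A_p$-weight — the structural fact driving the whole argument. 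Under this dictionary the abstract scaling condition $s-\tfrac{1+\gamma}{p}>\tau-\tfrac{1+\delta}{q}$ becomes, after inserting $\gamma$ and $\delta$, exactly $s-\tfrac1p>\tau-\tfrac1q$ in \eqref{emb} (same weight, $\gamma/p=\delta/q$) and $s-(1-\mu)-\tfrac1p>\tau-\tfrac1q$ in \eqref{emb1} (target weight $\delta=0$, with the compatibility $\gamma/p\ge\delta/q$ automatic). Thus both displayed conditions are one and the same Sobolev scaling inequality.

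First I would prove the embeddings on the half-line $\mathbb{R}_+$ and only afterwards transfer them to the finite interval $J$, working throughout in the Bochner setting so that the scalar case \eqref{emb} is simply $E=\mathbb{R}$. The building blocks are the integer endpoints: the fundamental theorem of calculus combined with the boundedness of the Hardy operator on $L_p(\mathbb{R}_+,w;E)$ — which holds because $w\in A_p$ — yields the first-order integrability gain $W^1_{p,\mu}(\mathbb{R}_+)\hookrightarrow L_{q,\mu}(\mathbb{R}_+)$ for every $q\in(p,\infty)$, and by differentiating also $W^k_{p,\mu}\hookrightarrow W^m_{q,\mu}$ at integer orders satisfying the scaling condition. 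The dilation invariance $t\mapsto\lambda t$ of $\mathbb{R}_+$ makes the exponent bookkeeping exact and, more importantly, forces the constants to be scale-invariant; this is the source of the $T$-independence asserted for the ${}_0$-spaces.

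Next I would fill in the fractional orders and the remaining scales by interpolation. Writing $W^s_{p,\mu}=(W^{\lfloor s\rfloor}_{p,\mu},W^{\lfloor s\rfloor+1}_{p,\mu})_{s-\lfloor s\rfloor,p}$ as in Definition~\ref{defslobo} for both source and target and invoking that the real functor preserves inclusions (together with a Gagliardo--Nirenberg step turning the interpolated Lorentz target into the required $L_{q,\mu}$), the integer building blocks upgrade to \eqref{emb} and \eqref{emb1} for all admissible non-integer $s,\tau$; reiteration absorbs any mismatch between $\lfloor s\rfloor$ and $\lfloor\tau\rfloor$. The same reasoning with the complex functor $(\cdot,\cdot)_{[\theta]}$ gives the $H$-scale, and the inclusions $W^s_{p,\mu}\hookrightarrow H^s_{p,\mu}\hookrightarrow W^{s-\varepsilon}_{p,\mu}$ — usable since the scaling inequality is strict, leaving room $\varepsilon>0$ — let me pass freely between Slobodetskii and Bessel-potential spaces. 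For the zero-trace spaces, extension by zero across $t=0$ maps ${}_0W^s_{p,\mu}(J)$ into the half-line space boundedly with a constant independent of $T$ for $s\in[0,2]$ (trace continuity and the ${}_0$-characterizations of Remark~\ref{nummer1}); restricting the half-line embedding back to $J$ yields the uniform statement. For the plain $W$- and $H$-spaces on finite $J$, I would subtract the Taylor polynomial generated by the continuous traces $u^{(j)}(0)$: the smooth remainder embeds trivially, the zero-trace part is handled as above, and $L_p(J)\hookrightarrow L_{p,\mu}(J)$ from Remark~\ref{nummer} closes the finite-$T$ cases.

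The main obstacle is the integrability gain $p\to q$ in the presence of the weight, carried out in the second step. Away from the origin this is a purely local, essentially unweighted Sobolev embedding, but near $t=0$ one must show that the weight costs exactly the shift $(1-\mu)$ and no more, and — decisively — that this can be done with a constant that does not blow up as $T\to0$ on the ${}_0$-spaces. It is here that the $A_p$-property of $w$ and the dilation invariance of $\mathbb{R}_+$ must be exploited with care; once that estimate is in hand, the fractional and multi-scale bookkeeping of the third step is routine. The detailed computations follow the lines of \cite{meyries_inter} and \cite{meydiss}.
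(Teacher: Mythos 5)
There is a genuine gap in the interpolation step, and it sits exactly where the real work of this theorem lies. Your integer-order building blocks are fine (and the translation of the hypothesis $\mu\in(\tfrac1p,1]$ into the $A_p$-condition $\gamma=(1-\mu)p\in[0,p-1)$ is the right way to see why the Hardy-type estimates go through), but real interpolation between the levels $s=0$ and $s=1$ cannot reproduce the claimed range of exponents at fractional order. The obstruction is the lower endpoint: at $s_0=0$ there is no integrability gain available, since $L_{p,\mu}(J)\not\hookrightarrow L_{q,\mu}(J)$ for $q>p$ on a bounded interval. Interpolating the pair $\bigl(L_{p,\mu},\,W^1_{p,\mu}\bigr)\to\bigl(L_{p,\mu},\,L_{\infty}\bigr)$ with parameter $\theta=s$ lands in a Lorentz-type space of integrability $\tfrac{1}{q_\theta}=\tfrac{1-s}{p}$, whereas \eqref{emb} asserts integrability up to $\tfrac{1}{q^*}=\tfrac1p-s$; since $\tfrac{1-s}{p}-\bigl(\tfrac1p-s\bigr)=s\bigl(1-\tfrac1p\bigr)>0$, one has $q_\theta<q^*$ strictly, and the strictness of the hypothesis $s-\tfrac1p>\tau-\tfrac1q$ does not rescue you: for instance $W^{\nicefrac18}_{2,\mu}\hookrightarrow L_{q,\mu}$ is claimed for all $q<\tfrac83$, while the interpolation route caps out at $q\le\tfrac{16}{7}$. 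Reiteration and the Gagliardo--Nirenberg step you invoke only reshuffle endpoints that already carry a gain; they cannot create the gain at low fractional order. So the ``main obstacle'' you correctly identify is not resolved by your third step --- the fractional embedding with integrability gain has to be proven directly on the dyadic/Besov level (Nikolskii-type inequalities blockwise, adapted to the power weight), which is precisely why the paper outsources this to Theorem 2.1.10 of \protect{\cite{butzdiss}} via weighted Besov spaces and calls \eqref{emb} a \emph{refinement} of Proposition 2.11 in \protect{\cite{meyries_inter}} rather than a corollary of it.

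A second, smaller point: you work ``throughout in the Bochner setting so that \eqref{emb} is simply $E=\mathbb{R}$'', but the paper deliberately states \eqref{emb} only for scalar functions, because the weighted Besov machinery used in its proof (Littlewood--Paley decompositions and the associated multiplier estimates) is not available for arbitrary Banach spaces $E$; only \eqref{emb1} is claimed vector-valued. If you intend to prove the $E$-valued version of \eqref{emb} you would need to justify every harmonic-analytic ingredient in that generality, which your sketch does not do. The parts of your plan concerning the reduction to the zero-trace case by extension, the $T$-independence of the zero-extension operator for $s\in[0,2]$, and the passage between the $W$- and $H$-scales via $H^{s+\varepsilon}_{p,\mu}\hookrightarrow W^s_{p,\mu}\hookrightarrow H^{s-\varepsilon}_{p,\mu}$ (legitimate here because the scaling inequalities are strict) are all consistent with how the paper proceeds.
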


The embedding \eqref{emb} is a refinement of Proposition 2.11, (2.18), in \protect{\cite{meyries_inter}}. The technical proof can be found in Theorem 2.1.10 in \protect{\cite{butzdiss}}: We use weighted Besov spaces which are another generalization of Sobolev spaces for the scalar valued case, see Subsection 2.1 in \protect{\cite{butzdiss}} for the definition. \\

Moreover, Propositions \ref{unab} enables us to deduce results for multiplication in Slobodetskii spaces. The proof can be found in Lemma 2.1.17 in \protect{\cite{butzdiss}}.
\begin{lem}
	\label{ban-alg} 
	Let $0 < T_0 < \infty$ be fixed and $J= (0, T)$ for $0 < T \leq T_0$. Moreover, let $\mu \in \left(\frac{7}{8},1 \right]$. 
	\begin{enumerate}
		\item{\label{lem1} Let $f \in {}_0 W^{\nicefrac{5}{8}}_{2, \mu} (J)$ and $g \in W^{\nicefrac{1}{8}}_{2, \mu} (J)$, then $fg \in W^{\nicefrac{1}{8}}_{2, \mu} (J)$ and 
			\begin{align*}
			\|f g\|_{W^{\nicefrac{1}{8}}_{2, \mu} (J)} \leq C(T)\|f\|_{W^{\nicefrac{5}{8}}_{2, \mu} (J)} \|g\|_{W^{\nicefrac{1}{8}}_{2, \mu} (J)},
			\end{align*}
			for a constant $C(T) \rightarrow 0$ monotonically as $T \rightarrow 0$.}
		\item{\label{lem2} Let $f, g \in {}_0 W^{\nicefrac{5}{8}}_{2, \mu} (J)$, then $fg \in {}_0 W^{\nicefrac{5}{8}}_{2, \mu} (J)$ and 
			\begin{align*}
			\|f g\|_{W^{\nicefrac{5}{8}}_{2, \mu} (J)} \leq C(T) \|f\|_{W^{\nicefrac{5}{8}}_{2, \mu} (J)} \|g\|_{W^{\nicefrac{5}{8}}_{2, \mu} (J)},
			\end{align*}
			for a constant $C(T) \rightarrow 0$ monotonically as $T \rightarrow 0$, i.e.\ the space ${}_0 W^{\nicefrac{5}{8}}_{2, \mu} (J)$ is a Banach algebra up to a constant in the norm estimate for the product.}
		\item{\label{lem2a} Let $f, g \in W^{\nicefrac{3}{8}}_{2, \mu} (J)$, then $fg \in W^{\nicefrac{1}{8}}_{2, \mu} (J)$ and 
			\begin{align*}
			\|f g\|_{W^{\nicefrac{1}{8}}_{2, \mu} (J)} \leq C(T) \|f\|_{W^{\nicefrac{3}{8}}_{2, \mu} (J)} \|g\|_{W^{\nicefrac{3}{8}}_{2, \mu} (J)},
			\end{align*}
			for a constant $C(T) \rightarrow 0$ monotonically as $T \rightarrow 0$.}
		\item{\label{lem3} Let $f \in W^{s}_{2, \mu} (J)$, $1 > s > \frac{1}{2} - \mu$ such that there exists a $\tilde{C} > 0$ with $|f| \geq \tilde{C}$. Then $\tfrac{1}{f} \in W^{s}_{2, \mu} (J)$ with
			\begin{align*}
			\left\|\tfrac{1}{f} \right\|_{W^{s}_{2, \mu} (J)} \leq C \left(\|f\|_{W^{s}_{2, \mu} (J)}, \tilde{C}, T_0 \right).
			\end{align*}}
	\end{enumerate}
\end{lem}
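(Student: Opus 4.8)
The plan is to reduce every claim to the two constituents of the weighted Slobodetskii norm, namely the $L_{2,\mu}(J)$-part and the Gagliardo seminorm $[\cdot]_{W^s_{2,\mu}(J)}$ of Remark \ref{nummer1}~(viii), and to exploit that on a finite interval any surplus in the defining exponents produces a positive power of $T$. For a product I would always start from the pointwise Leibniz splitting
\begin{align*}
f(t)g(t) - f(\tau)g(\tau) = f(t)\bigl(g(t)-g(\tau)\bigr) + \bigl(f(t)-f(\tau)\bigr)g(\tau),
\end{align*}
insert it into the seminorm, and estimate the two resulting weighted double integrals separately. The uniformity of the embedding constants in $T$, guaranteed by Proposition \ref{unab} together with the $\|\cdot\|'$-norm and the $T$-independent constants on the ${}_0$-spaces, is what ultimately lets me conclude $C(T)\to 0$.

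For (i) and (ii) the decisive structural fact is that the more regular factor $f\in{}_0W^{5/8}_{2,\mu}(J)$ embeds into $C^\alpha(\bar J)$ for $\alpha$ close to $1$ by Proposition \ref{unab}~(iii) (admissible since $\mu>\tfrac78$ forces $\mu+\tfrac18>1$), with a constant independent of $T$, and that its vanishing trace gives
\begin{align*}
\|f\|_{C(\bar J)} = \sup_{t\in\bar J}\bigl|f(t)-f(0)\bigr| \leq [f]_{C^\alpha(\bar J)}\,T^\alpha \leq C\,T^\alpha\,\|f\|_{W^{5/8}_{2,\mu}(J)}.
\end{align*}
In the first Leibniz term I pull out $\|f\|_{C(\bar J)}$ and retain the genuine $g$-seminorm ($[g]_{W^{1/8}}$ in (i), $[g]_{W^{5/8}}$ in (ii)); in the second term I use $|f(t)-f(\tau)|\leq [f]_{C^\alpha}|t-\tau|^\alpha$, which converts the singular kernel into a positive power of $|t-\tau|$, integrable in $t$ and bounded by a positive power of $T$ times $\|g\|_{L_{2,\mu}(J)}$. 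Choosing $\alpha$ sufficiently close to $1$ makes all remaining exponents positive, so both terms, as well as the $L_{2,\mu}$-part $\|fg\|_{L_{2,\mu}}\leq\|f\|_{C(\bar J)}\|g\|_{L_{2,\mu}}$, carry a factor $T^\beta$ with $\beta>0$, whence $C(T)\to 0$. For (ii) I additionally note $(fg)(0)=0$, so $fg$ indeed lies in ${}_0W^{5/8}_{2,\mu}(J)$.

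The main obstacle is (iii), where neither factor is bounded, since $W^{3/8}_{2,\mu}(J)$ does not embed into $C(\bar J)$ for $\mu\le 1$ and the $C^\alpha$-trick is unavailable. Here I would trade regularity for integrability: by Proposition \ref{unab}~(i) one has $W^{3/8}_{2,\mu}(J)\hookrightarrow W^{1/8}_{q,\mu}(J)$ and $W^{3/8}_{2,\mu}(J)\hookrightarrow L_{q,\mu}(J)$ for a suitable $q>2$, and then estimate the two weighted singular double integrals from the Leibniz splitting by Hölder's inequality, distributing the weight $\tau^{2(1-\mu)}$ and the kernel $|t-\tau|^{-5/4}$ carefully between the factors. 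The estimate closes precisely because of the Sobolev surplus $\tfrac38+\tfrac38-\tfrac18=\tfrac58>\tfrac12$; this surplus also leaves room to work with a strictly subcritical integrability exponent, and since on the finite interval $L_q(J)\hookrightarrow L_r(J)$ for $r<q$ costs a factor $T^{1/r-1/q}>0$, the bound again acquires a positive power of $T$ and hence $C(T)\to 0$. I expect the bookkeeping of the weight and the singular kernel in these bilinear integrals — in particular arranging that the $g$-difference always enters quadratically so that a true seminorm is recovered — to be the technically delicate step.

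Finally, (iv) follows from the pointwise Lipschitz estimate for $x\mapsto x^{-1}$ on $\{|x|\ge\tilde C\}$: from
\begin{align*}
\frac{1}{f(t)}-\frac{1}{f(\tau)} = \frac{f(\tau)-f(t)}{f(t)f(\tau)}, \qquad |f(t)f(\tau)|\ge\tilde C^2,
\end{align*}
the seminorm characterization immediately yields $\bigl[\tfrac1f\bigr]_{W^s_{2,\mu}(J)}\le\tilde C^{-2}[f]_{W^s_{2,\mu}(J)}$, while $\|\tfrac1f\|_{L_{2,\mu}(J)}\le\tilde C^{-1}\|1\|_{L_{2,\mu}(J)}\le C(T_0)\,\tilde C^{-1}$, since $\|1\|_{L_{2,\mu}(J)}^2=\int_0^T t^{2(1-\mu)}\dd t$ is finite for $\mu\le 1$. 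Summing both contributions gives the asserted bound depending only on $\|f\|_{W^s_{2,\mu}(J)}$, $\tilde C$ and $T_0$; the admissible range of $s$ is exactly what makes the Gagliardo characterization applicable, and no smallness in $T$ is claimed or needed for this reciprocal estimate.
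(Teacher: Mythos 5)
The paper does not reproduce the proof of this lemma (it refers to Lemma 2.1.17 of \cite{butzdiss} and indicates only that everything rests on the uniform embeddings of Proposition \ref{unab}), so your proposal has to stand on its own --- and for parts (i) and (ii) it does not. The critical error is the claim that $f\in{}_0W^{\nicefrac{5}{8}}_{2,\mu}(J)$ embeds into $C^\alpha(\bar J)$ ``for $\alpha$ close to $1$''. The H\"older exponent actually obtainable from Proposition \ref{unab}.\ref{unab0b} is $\alpha<\nicefrac{5}{8}-(1-\mu)-\nicefrac{1}{2}=\nicefrac{1}{8}-(1-\mu)\le\nicefrac{1}{8}$; already in the unweighted case $\mu=1$ the Sobolev--Morrey threshold for $W^{\nicefrac58}_2$ is $C^{0,\nicefrac18}$ (for instance $t^{0.13}\in W^{\nicefrac58}_2(0,1)$ but $t^{0.13}\notin C^{0,\alpha}$ for any $\alpha>0.13$). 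With $\alpha\le\nicefrac18$ your treatment of the second Leibniz term collapses: inserting $|f(t)-f(\tau)|\le[f]_{C^\alpha}|t-\tau|^\alpha$ into the seminorm with $s=\nicefrac18$ produces the kernel $|t-\tau|^{2\alpha-\nicefrac54}$, and $\int_\tau^T(t-\tau)^{2\alpha-\nicefrac54}\,\dd t$ converges only for $\alpha>\nicefrac18$; for $s=\nicefrac58$ in part (ii) one would even need $\alpha>\nicefrac58$. So the step ``converts the singular kernel into a positive power of $|t-\tau|$'' is not available, and the term $(f(t)-f(\tau))g(\tau)$ is simply not controlled by your argument.

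The two parts need different repairs. In (ii) both factors are bounded with zero trace, so the second Leibniz term should be handled symmetrically to the first: pull out $\|g\|_{C(\bar J)}$ (which again carries a factor $T^\alpha$) and keep the full seminorm $[f]_{W^{\nicefrac58}_{2,\mu}(J)}$ --- no pointwise H\"older bound on the increments of $f$ is needed at all. In (i) the factor $g$ is unbounded, so that option is closed as well; there you must argue as you yourself propose for (iii), trading regularity of $f$ for integrability via Theorem \ref{embl1} and Proposition \ref{unab}.\ref{unab0} and closing the bilinear estimate with H\"older's inequality inside the weighted double integral. In other words, (i) is of the same level of difficulty as (iii), not an easy consequence of a H\"older bound on $f$. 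Your sketch of (iii) has the right shape (the surplus $\nicefrac38+\nicefrac38-\nicefrac18>\nicefrac12$ is indeed what makes the exponents close), but it is a plan rather than a proof. Part (iv) is correct: the factorization $\nicefrac{1}{f(t)}-\nicefrac{1}{f(\tau)}=(f(\tau)-f(t))/(f(t)f(\tau))$ with $|f(t)f(\tau)|\ge\tilde C^2$, combined with the Gagliardo characterization of Remark \ref{nummer1}, gives exactly the stated bound, with no smallness in $T$ claimed.
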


\begin{rem} \label{banrem}
	Using Lemma \ref{ban-alg}.\ref{lem1} and \ref{ban-alg}.\ref{lem2}, we can state similar claims for functions which do not have trace zero, see Remark 2.1.18 in \protect{\cite{butzdiss}}: Let $0 < T_0 < \infty$ be fixed and $J= (0, T)$ for $0 < T \leq T_0$.
	Furthermore, let $f \in W^{\nicefrac{5}{8}}_{2, \mu} (J)$ and $g \in W^{\nicefrac{1}{8}}_{2, \mu} (J)$, then $fg \in W^{\nicefrac{1}{8}}_{2, \mu} (J)$ and 
	\begin{align*}
	\|f g\|^2_{W^{\nicefrac{1}{8}}_{2, \mu} (J)} \leq C(T_0)\|f\|^2_{W^{\nicefrac{5}{8}}_{2, \mu} (J)} \|g\|^2_{W^{\nicefrac{1}{8}}_{2, \mu} (J)} + C(T_0) |f(0)|^2 \| g \|^2_{W^{\nicefrac{1}{8}}_{2, \mu} (J)},
	\end{align*}
	for a uniform constant ${C}(T_0)$ for all $0 < T \leq T_0$.
\end{rem}

\subsection{Maximal Regularity Spaces with Temporal Weights and Related Embeddings}

An important tool to solve linear problems with optimal regularity is Theorem 2.1 in \protect{\cite{meyries}} on maximal $L_p$-regularity with temporal weights. The following statements are based on the results in \protect{\cite{meyries_inter}}, \protect{\cite{meyries}}, and \protect{\cite{meydiss}}. In the following, we use the notation
\begin{align}
\mathbb{E}_{\mu, T, E} &:= W^1_{2, \mu} \big(J; L_{2} (I; E)\big) \cap L_{2, \mu} \big(J; W^{2m}_{2} (I; E)\big) , \nonumber \\
\mathbb{E}_{0, \mu, E} &:= L_{2, \mu} \big(J; L_{2} (I; E)\big) , \nonumber \\
X_{\mu, E} &:= W_2^{2m\left(\mu - \nicefrac{1}{2}\right)} (I; E), \nonumber \\
\mathbb{F}_{j, \mu, E} &:= W^{\omega_j}_{2, \mu} \big(J; L_{2} (\partial I; E) \big) \cap L_{2, \mu} \big(J; W^{2m \omega_j}_{2} (\partial I; E)\big),
\label{nota}
\end{align}
where $J = (0, T)$ and $\omega_j := 1 - \nicefrac{m_j}{2m} - \nicefrac{1}{4m}$, $j = 1, \dots, m$. We set for convenience
\begin{align*}
\tilde{\mathbb{F}}_{\mu, E} :=  \mathbb{F}_{1, \mu, E} \times \cdots \times \mathbb{F}_{m, \mu, E}.
\end{align*}
All the spaces are equipped with their natural norms. We will omit the subscript $\cdot_E$ in the spaces if $E = \mathbb{R}$, e.g.\ $\mathbb{E}_{\mu, T} := \mathbb{E}_{\mu, T, \mathbb{R}}$. \\

In the following, we want to give some useful embeddings for the involved spaces based on results in \protect{\cite{meyries_inter}}, see Subsections 2.2.1 and 2.2.2 in \protect{\cite{butzdiss}} for the proofs.
\begin{lem}
	\label{unabneu}
	Let $T_0$ be fixed, $J = (0, T)$, $0 < T \leq T_0$, and $I$ a bounded open interval. Let $E$ be a Banach space.
	\begin{enumerate}
		\item{\label{unab1}
			Then 
			\begin{align*}
			\mathbb{E}_{\mu, T, E} \hookrightarrow BUC \big(\bar{J}, X_{\mu, E} \big)
			\end{align*}
			with the estimate
			\begin{align*}
			\|\rho\|_{BUC (\bar{J}, X_{\mu, E})} \leq C(T_0) \left(\|\rho\|_{\mathbb{E}_{\mu, T, E}} + \|\rho_{| t = 0}\|_{X_{\mu, E}} \right). 
			\end{align*}}
		\item{\label{unab1a} Let $m=2$ in \eqref{nota} and $\mu \in \left(\frac{7}{8}, 1 \right]$. Then there exists an $\bar{\alpha} \in (0, 1)$ such that 
			\begin{align*}
			\mathbb{E}_{\mu, T, \mathbb{R}^n} \hookrightarrow C^{\bar{\alpha}}\left(\bar{J}; C^1 (\bar{I}; \mathbb{R}^n) \right)
			\end{align*}
			with the estimate
			\begin{align*}
			\|\rho\|_{C^{\bar{\alpha}}([0, T]; C^1(\bar{I}; \mathbb{R}^n))} \leq C(T_0) \left(\|\rho\|_{\mathbb{E}_{\mu, T, \mathbb{R}^n}} + \|\rho_{| t = 0}\|_{X_{\mu, \mathbb{R}^n}} \right).
			\end{align*}}
		\item{\label{unab234} Let $m=2$ in \eqref{nota} and $\mu \in \left(\frac{7}{8}, 1 \right]$. Then the pointwise realization of the $k$-th spatial derivative $\partial^k_{\sigma}$, $k = 1, \dots, 4$, is a continuous map 
			\begin{align*}
			\mathbb{E}_{\mu, T, E} \hookrightarrow H^{\nicefrac{(4 - k)}{4}}_{2, \mu} \big(J; L_2(I; E) \big) \cap L_{2, \mu}\big(J; H^{4 - k}_{2}(I; E) \big)
			\end{align*}
			with the estimate 
			\begin{align*}
			\left\|\partial^k_{\sigma} \rho \right\|_{H^{\nicefrac{(4 - k)}{4}}_{2, \mu}(J; L_2(I; E)) \cap L_{2, \mu}(J; H^{4 - k}_{2}(I; E))} \leq C(T_0) \left(\|\rho\|_{\mathbb{E}_{\mu, T, E}}  + \|\rho_0\|_{{X_{\mu, E}}} \right).
			\end{align*}}
		\item{\label{unab5678}
			Let $m=2$ in \eqref{nota} and $\mu \in \left(\frac{7}{8}, 1 \right]$. Then the spatial trace operator applied to the $k$-th spatial derivative $tr_{|I} \partial^{k}_\sigma$, $k = 0, 1, 2, 3$, is a continuous map
			\begin{align*}
			\mathbb{E}_{\mu, T, E} \rightarrow W^{\nicefrac{(8 - 2k - 1)}{8}}_{2, \mu}\big(J; L_2(\partial I; E)\big) \cap L_{2, \mu}\big(J; W^{\nicefrac{(8 - 2k - 1)}{2}}_{2}(\partial I; E)\big)
			\end{align*}
			with the estimate 
			\begin{align*}
			\big\|tr_{|I} &\partial^{k}_\sigma \rho \big\|_{W^{\nicefrac{(8 - 2k - 1)}{8}}_{2, \mu}(J; L_2(\partial I; E)) \cap L_{2, \mu}(J; W^{\nicefrac{(8 - 2k - 1)}{2}}_{2}(\partial I; E))} \\ 
			&\hspace{5cm} \leq C(T_0) \left(\|\rho\|_{\mathbb{E}_{\mu, T, E}} + \|\rho_{| t = 0}\|_{{X_{\mu, E}}} \right).
			\end{align*}}
	\end{enumerate}
\end{lem}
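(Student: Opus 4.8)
The plan is to derive all four embeddings from the anisotropic structure of $\mathbb{E}_{\mu, T, E} = W^1_{2, \mu}(J; L_2(I; E)) \cap L_{2, \mu}(J; W^{2m}_2(I; E))$ by combining the mixed derivative theorem for temporally weighted maximal regularity spaces (as developed in \cite{meyries_inter}, \cite{meyries}, and \cite{meydiss}) with the one-dimensional Sobolev embeddings in space and the temporal embeddings collected in Proposition \ref{unab} and Remark \ref{nummer1}. The recurring device securing uniformity of the constants in $T$ is a reduction to vanishing temporal trace: one extends $\rho_{|t=0} \in X_{\mu, E}$ to a function on $J$ whose $\mathbb{E}_{\mu, T, E}$-norm is controlled by $\|\rho_{|t=0}\|_{X_{\mu, E}}$ uniformly in $T \leq T_0$, subtracts it, and applies to the remainder the $T$-uniform embeddings available for the ${}_0$-spaces in Theorem \ref{embl1} and Proposition \ref{unab}. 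This is exactly why the term $\|\rho_{|t=0}\|_{X_{\mu, E}}$ appears on the right-hand side of every estimate.

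For item \ref{unab1} I would use that, by the trace theory of \cite{meyries}, the initial trace space of $\mathbb{E}_{\mu, T, E}$ is precisely the real interpolation space $\big(L_2(I; E), W^{2m}_2(I; E)\big)_{\mu - \nicefrac{1}{2}, 2} = X_{\mu, E}$, together with $\mathbb{E}_{\mu, T, E} \hookrightarrow BUC(\bar{J}; X_{\mu, E})$; the zero-trace reduction then turns this into a bound with a $T$-independent constant. For item \ref{unab234} I would invoke the mixed derivative theorem directly: since $[L_2(I; E), W^4_2(I; E)]_{\theta} = H^{4\theta}_2(I; E)$, it gives $\mathbb{E}_{\mu, T, E} \hookrightarrow H^{1 - \theta}_{2, \mu}\big(J; H^{4\theta}_2(I; E)\big)$ for every $\theta \in [0, 1]$. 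Choosing $\theta = \nicefrac{k}{4}$ and using that $\partial^k_\sigma$ maps $H^k_2(I; E)$ boundedly into $L_2(I; E)$ yields the temporal component $H^{\nicefrac{(4-k)}{4}}_{2, \mu}(J; L_2(I; E))$, while the spatial component $L_{2, \mu}(J; H^{4-k}_2(I; E))$ is immediate from $\partial^k_\sigma : W^4_2 \to H^{4-k}_2$.

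Items \ref{unab1a} and \ref{unab5678} then build on \ref{unab234}. For the Hölder-in-time, $C^1$-in-space embedding \ref{unab1a} with $m = 2$, I would start again from $\mathbb{E}_{\mu, T, \mathbb{R}^n} \hookrightarrow H^{1 - \theta}_{2, \mu}\big(J; H^{4\theta}_2(I; \mathbb{R}^n)\big)$ and pick $\theta$ so that simultaneously the spatial factor embeds, $H^{4\theta}_2(I) \hookrightarrow C^1(\bar{I})$, i.e.\ $4\theta > \nicefrac{3}{2}$, and the temporal factor admits a Hölder embedding $H^{1-\theta}_{2, \mu}(J) \hookrightarrow C^{\bar\alpha}(\bar J)$, which by Proposition \ref{unab}.\ref{unab0b} — valid here since for $p = 2$ and a Hilbertian target the Bessel potential and Slobodetskii scales coincide — requires $1 - \theta > \nicefrac{3}{2} - \mu$; the admissible window $\theta \in \big(\nicefrac{3}{8}, \mu - \nicefrac{1}{2}\big)$ is nonempty exactly because $\mu > \nicefrac{7}{8}$. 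For item \ref{unab5678} I would apply the spatial trace theorem for these anisotropic spaces to the outcome of \ref{unab234}: evaluation on $\partial I$ costs $\nicefrac{1}{2}$ of a spatial derivative and, by the parabolic scaling $2m = 4$, the corresponding $\nicefrac{1}{8}$ of a temporal derivative, which turns the regularity pair $\big(\nicefrac{(4-k)}{4}, 4-k\big)$ into $\big(\nicefrac{(8 - 2k - 1)}{8}, \nicefrac{(8 - 2k - 1)}{2}\big)$.

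The main obstacle I anticipate is not any single embedding but keeping all constants uniform in $T \in (0, T_0]$ while staying on the sharp regularity thresholds: this forces the zero-trace reduction at every step and careful bookkeeping of which underlying estimates are genuinely $T$-independent, namely the ${}_0$-versions in Theorem \ref{embl1} restricted to $s \in [0, 2]$ and the primed-norm estimates of Proposition \ref{unab}. The second delicate point is the boundary value $\mu = \nicefrac{7}{8}$ in \ref{unab1a}: Hölder regularity in time and $C^1$ regularity in space can be achieved jointly only for $\mu$ strictly above $\nicefrac{7}{8}$, so the argument must exploit the strict inequality to guarantee a nonempty range of interpolation parameters $\theta$.
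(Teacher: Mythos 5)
Your proposal is correct and follows essentially the same route as the proofs the paper points to (Subsections 2.2.1–2.2.2 of \cite{butzdiss}, built on the interpolation, mixed-derivative, and trace results of \cite{meyries_inter} and \cite{meyries}): temporal trace theory for item $(i)$, the mixed derivative theorem combined with spatial Sobolev embeddings and trace theorems for items $(ii)$–$(iv)$, and the reduction to vanishing temporal trace to make all constants uniform in $T \leq T_0$. The parameter bookkeeping (the window $\theta \in (\nicefrac{3}{8}, \mu - \nicefrac{1}{2})$ requiring $\mu > \nicefrac{7}{8}$, and the loss of $\nicefrac{1}{2}$ spatial and $\nicefrac{1}{8}$ temporal derivatives under the boundary trace) matches the stated exponents.
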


The following proposition is used to estimate the non-linearities.
\begin{prop} \label{embed}
	Let $J = (0, T)$ let $I$ be a bounded open interval. Further let $\mu \in \left(\frac{1}{2}, 1 \right]$, $k \in \mathbb{N}$ and $q \in [2, \infty]$. Then 
	\begin{align*} 
	L_{\infty}\left(J, W_{2}^{4(\mu - \nicefrac{1}{2})} (I)\right) \cap L_{2 , \mu} \left(J, W^4_2 (I)\right) \hookrightarrow L_{l , \tilde{\mu}} \left(J, W^{k}_{q} (I)\right)
	\end{align*}
	for $\tilde{\mu} = \mu + (1 - \theta)(1 - \mu) \in [\mu, 1]$,
	if $k + \frac{1}{2} - \frac{1}{q} = 4\left(\mu - \frac{1}{2}\right) (1 - \theta) + 4 \theta$ and $l = \frac{2}{\theta}$ for a $\theta \in (0, 1)$. \\
	The operator norm does not depend on $T$.
\end{prop}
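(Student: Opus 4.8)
\emph{Proof plan.} The plan is to reduce this anisotropic space--time embedding to a purely spatial interpolation inequality applied for almost every fixed time, followed by an elementary weight-bookkeeping computation in the time variable. Throughout, write $s_0 := 4(\mu - \tfrac12) \in (0,2]$ and $s_1 := 4$ for the smoothness indices of the two endpoint spaces on $I$.

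First I would establish, for a fixed scalar function $f$ on the bounded interval $I$, the multiplicative Gagliardo--Nirenberg inequality
\[
\|f\|_{W_q^k(I)} \leq C\,\|f\|_{W_2^{s_0}(I)}^{1-\theta}\,\|f\|_{W_2^{s_1}(I)}^{\theta},
\]
with $C = C(I,k,q,\mu,\theta)$ independent of $T$. This is the step in which the index condition is consumed: the stated relation $k + \tfrac12 - \tfrac1q = (1-\theta)s_0 + \theta s_1$ is exactly the dimensional-balance identity $k - \tfrac1q = (1-\theta)(s_0 - \tfrac12) + \theta(s_1 - \tfrac12)$ required for such an inequality in one space dimension, while the accompanying smoothness constraint $k \leq (1-\theta)s_0 + \theta s_1$ holds because $\tfrac1q \leq \tfrac12$. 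For $q < \infty$ one may obtain the inequality by complex interpolation, $[W_2^{s_0}(I), W_2^{s_1}(I)]_{\theta} = W_2^{s_\theta}(I)$ with $s_\theta = (1-\theta)s_0 + \theta s_1 = k + \tfrac12 - \tfrac1q$, followed by the critical Sobolev embedding $W_2^{s_\theta}(I) \hookrightarrow W_q^k(I)$; the endpoint $q = \infty$, where the straight Sobolev embedding is borderline, is covered directly by the multiplicative form of the inequality.

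Next I would apply this inequality pointwise for almost every $t \in J$ to $\rho(t)$, raise both sides to the power $l = \tfrac2\theta$, multiply by the temporal weight $t^{(1-\tilde{\mu})l}$, and integrate over $J$. Since $\theta l = 2$, the $W_2^{s_1}$-factor enters with exponent $2$, and bounding the $W_2^{s_0}$-factor by its essential supremum extracts $\|\rho\|_{L_\infty(J; W_2^{s_0}(I))}^{(1-\theta)l}$ from the integral. The decisive identity is
\[
(1-\tilde{\mu})\,l = 2(1-\mu),
\]
which is equivalent to $\tilde{\mu} = 1 - \theta(1-\mu) = \mu + (1-\theta)(1-\mu)$; it makes the remaining weighted integral coincide exactly with $\|\rho\|_{L_{2,\mu}(J; W_2^{s_1}(I))}^{2}$. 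Taking $l$-th roots and using $\tfrac2l = \theta$ then yields
\[
\|\rho\|_{L_{l, \tilde{\mu}}(J; W_q^k(I))} \leq C\,\|\rho\|_{L_\infty(J; W_2^{s_0}(I))}^{1-\theta}\,\|\rho\|_{L_{2,\mu}(J; W_2^{s_1}(I))}^{\theta},
\]
which is precisely the asserted embedding; the constant is $T$-independent because $C$ originates solely from the spatial inequality. Measurability of $t \mapsto \rho(t) \in W_q^k(I)$ and the admissibility $\tilde\mu \in (\tfrac1l, 1]$ (needed for the weighted target space to be defined) follow from $\tilde\mu \geq \mu > \tfrac12 > \tfrac1l$ together with the same pointwise bound.

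I expect the only genuine obstacle to be the first step: fixing the spatial Gagliardo--Nirenberg inequality with the precise index and smoothness bookkeeping, in particular so that it covers the fractional endpoint smoothness $s_0 = 4(\mu-\tfrac12)$ and the limiting integrability $q = \infty$ with a constant independent of $T$. Once that inequality is secured, the time-direction estimate is the routine computation above, whose entire content is the weight-matching identity $(1-\tilde\mu)l = 2(1-\mu)$ together with $\theta l = 2$.
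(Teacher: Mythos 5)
Your proof is correct and follows essentially the same route as the paper: a spatial interpolation inequality $\|f\|_{W^k_q(I)} \leq C\|f\|_{W_2^{4(\mu-\nicefrac{1}{2})}(I)}^{1-\theta}\|f\|_{W_2^4(I)}^{\theta}$ (the paper obtains it via real interpolation $(\cdot,\cdot)_{\theta,2}$ plus the Besov--Sobolev embedding, which for $p=2$ is equivalent to your complex-interpolation version), followed by the same weight bookkeeping $1-\tilde{\mu}=\theta(1-\mu)$ and the splitting of the temporal $L_l$-norm into $L_\infty^{1-\theta}\cdot L_2^{\theta}$, which the paper phrases as H\"older's inequality with exponents $\nicefrac{\infty}{1-\theta}$ and $\nicefrac{2}{\theta}$. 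The only cosmetic difference is that the paper finishes with Young's inequality to pass from the multiplicative to the additive bound.
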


\begin{proof}
	To show the embedding, we use 
	\begin{align}
	W_{2}^{s} (I) \hookrightarrow W^{k}_{q} (I) && \textrm{ for } s - \frac{1}{2} \geq k - \frac{1}{q} \textrm{ with } q \geq 2,
	\label{besov}
	\end{align}
	which follows by the definition of the spaces and by a standard embedding theorem for Besov spaces, see for example Theorem 6.5.1 in \protect{\cite{bergh}}.
	Furthermore, by Theorem 1 in Section 4.3.1 of \protect{\cite{triebel}}, we have
	\begin{align*}
	W_{2}^{s} (I) = \left(W_{2}^{4(\mu - \nicefrac{1}{2})} (I), W^4_2 (I)\right)_{\theta, 2} &&\textrm{ for } s = 4 \left(\mu - \frac{1}{2}\right) (1 - \theta) + 4 \theta, \theta \in (0, 1)
	\end{align*}
	with
	\begin{align}
	\| \phi \|_{W_{2}^{s} (I)} \leq C \| \phi \|_{W_{2}^{4(\mu - \nicefrac{1}{2})}(I)}^{1 - \theta} \| \phi \|_{W^4_2 (I)}^{\theta} && \textrm{ for } \phi \in W_{2}^{4(\mu - \nicefrac{1}{2})}(I) \cap W^4_2 (I) = W^4_2 (I).
	\label{inter1}
	\end{align}
	
	In the following, we consider $\|\rho\|_{L_{l , \tilde{\mu}} (J, W^{k}_{q} (I))}$ for $\rho \in L_{\infty} (J, W_{2}^{4(\mu - \nicefrac{1}{2})} (I)) \cap L_{2 , \mu} (J, W^4_2 (I))$ for $l = \nicefrac{2}{\theta}$ and $\tilde{\mu} = \mu + (1 - \theta)(1 - \mu)$. To this end, we use $s = k + \nicefrac{1}{2} - \nicefrac{1}{q} = 4 (\mu - \nicefrac{1}{2}) (1 - \theta) + 4 \theta$ and combine \eqref{besov} and \eqref{inter1}. We obtain 
	\begin{align*}
	\|\rho\|_{L_{l , \tilde{\mu}} (J, W^{k}_{q} (I))} \leq C \|\rho\|_{L_{l , \tilde{\mu}} (J, W_{2}^{s} (I))} \leq  C \left\| \| \rho (t) \|_{W_{2}^{4(\mu - \nicefrac{1}{2})}(I)}^{1 - \theta} \| \rho (t) \|_{ W^4_2 (I)}^{\theta} \right\|_{L_{l , \tilde{\mu}} (J)}.
	\end{align*}
	Taking care of the time weight, we use ${1-\tilde{\mu}} = \theta (1 - \mu)$ to deduce
	\begin{align*}
	\|\rho\|_{L_{l , \tilde{\mu}} (J, W^{k}_{q} (I))} \leq
	C \left\| \| \rho (t) \|_{W_{2}^{4(\mu - \nicefrac{1}{2})}(I)}^{1 - \theta} \left(t^{1-{\mu}} \| \rho (t) \|_{ W^4_2 (I)} \right)^{\theta} \right\|_{L_{l} (J)}.
	\end{align*}
	By Hölder's inequality for $\tilde{p} = \nicefrac{\infty}{1 - \theta}$ and $\tilde{q} = \nicefrac{2}{\theta}$,
	it follows
	\begin{align*}
	\|\rho\|_{L_{l , \tilde{\mu}} (J, W^{k}_{q} (I))} &\leq C \left\| \| \rho (t) \|_{W_{2}^{4(\mu - \nicefrac{1}{2})}(I)} \right\|_{{L_{\infty}(J)}}^{1 - \theta}  \left\| t^{1- \mu} \| \rho (t) \|_{ W^4_2 (I)} \right\|_{L_{^2}(J)}^{\theta} \\
	&\leq C \left(\| \rho \|_{L_{\infty}(J, W_{2}^{4(\mu - \nicefrac{1}{2})}(I))} + \| \rho \|_{L_{2, \mu}(J, W^4_2 (I))}\right),
	\end{align*}
	where Young's inequality yields the last estimate. The constant does not depend on $T$.
\end{proof}

\subsection{An Estimate for the Reciprocal Length by the Curvature}
 
\begin{lem} \label{kappabound}
	Let $\alpha \in (0, \pi)$. Furthermore, let $c: [0, 1] \rightarrow \mathbb{R}^2, \sigma \mapsto c(\sigma),$ be a regular curve of class $C^2$ parametrized proportional to arc length. Moreover, let the unit tangent $\tau := \frac{\partial_\sigma c}{\mathcal{L}[c]}$ fulfill
	\begin{align*}
	\tau (\sigma) = \begin{pmatrix} \cos \alpha \\ \pm \sin \alpha \end{pmatrix} && \text{ for } \sigma = 0, 1,
	\end{align*}
	where $\mathcal{L}[c]$ denotes the length of the curve.
	Then it holds
	\begin{align*}
	\frac{1}{\mathcal{L}[c]} \leq \frac{1}{\sqrt{2}\sin \alpha} \left\|\kappa[c] \right\|_{C([0, 1])}.
	\end{align*}
\end{lem}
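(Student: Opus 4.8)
The plan is to compare the unit tangent at the two endpoints and to convert the pointwise size of $\partial_\sigma\tau$ into curvature via the constant-speed parametrization. Since $c$ is parametrized proportionally to arc length, the speed $|\partial_\sigma c|$ is constant and equals $\mathcal{L}[c]$; writing $s = \mathcal{L}[c]\,\sigma$ for the arc length and using the Frenet relation $\partial_s^2 c = \kappa[c]\,n$ with $n$ the unit normal, one obtains $\partial_\sigma\tau = \mathcal{L}[c]\,\kappa[c]\,n$. In particular the pointwise identity $|\partial_\sigma\tau(\sigma)| = \mathcal{L}[c]\,|\kappa[c](\sigma)|$ holds for every $\sigma\in[0,1]$, whence $|\partial_\sigma\tau(\sigma)| \le \mathcal{L}[c]\,\|\kappa[c]\|_{C([0,1])}$. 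This elementary identity, valid because the parametrization has constant speed, is the whole analytic content.

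First I would record the endpoint data. The boundary condition forces $\tau(0),\tau(1)\in\{(\cos\alpha,\sin\alpha),(\cos\alpha,-\sin\alpha)\}$, and the prescribed contact angle makes the two tangents point to opposite sides of the supporting line, i.e.\ their second components carry opposite signs. Consequently $\tau(1)-\tau(0) = (0,\pm 2\sin\alpha)$, so that $|\tau(1)-\tau(0)| = 2\sin\alpha$ (recall $\sin\alpha>0$ since $\alpha\in(0,\pi)$).

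Next, applying the fundamental theorem of calculus componentwise to the $C^1$ map $\tau$ and inserting the pointwise bound gives
\[
2\sin\alpha = |\tau(1)-\tau(0)| = \Big|\int_0^1 \partial_\sigma\tau(\sigma)\,d\sigma\Big| \le \int_0^1 |\partial_\sigma\tau(\sigma)|\,d\sigma \le \mathcal{L}[c]\,\|\kappa[c]\|_{C([0,1])}.
\]
Dividing by $2\sin\alpha\,\mathcal{L}[c]>0$ yields $\tfrac{1}{\mathcal{L}[c]} \le \tfrac{1}{2\sin\alpha}\|\kappa[c]\|_{C([0,1])} \le \tfrac{1}{\sqrt2\sin\alpha}\|\kappa[c]\|_{C([0,1])}$, which is the claim (indeed with room to spare, as $2\ge\sqrt2$). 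To sidestep the discussion of signs one may run the same estimate on the single scalar function $\tau_2 := \langle\tau,(0,1)\rangle$: the mean value theorem produces $\xi\in(0,1)$ with $|\partial_\sigma\tau_2(\xi)| = |\tau_2(1)-\tau_2(0)| = 2\sin\alpha$, and then $2\sin\alpha = |\partial_\sigma\tau_2(\xi)| \le |\partial_\sigma\tau(\xi)| \le \mathcal{L}[c]\,\|\kappa[c]\|_{C([0,1])}$.

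The only genuinely delicate point is the determination of the endpoint configuration: the estimate is vacuous if $\tau(0)=\tau(1)$ (a straight segment has $\kappa[c]\equiv 0$ and would otherwise contradict the inequality), so one must use that the two prescribed boundary tangents differ, lying on opposite sides of the line. Everything else is the constant-speed identity $|\partial_\sigma\tau| = \mathcal{L}[c]\,|\kappa[c]|$ together with a one-line integration, and the factor $\sqrt2$ in the statement is comfortably absorbed.
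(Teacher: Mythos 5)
Your proof is correct and follows essentially the same route as the paper: the fundamental theorem of calculus applied to $\tau$, the constant-speed identity $|\partial_\sigma\tau|=\mathcal{L}[c]\,|\kappa[c]|$, and the evaluation of $|\tau(1)-\tau(0)|$ from the boundary data (the paper likewise tacitly uses that the second components of $\tau(0)$ and $\tau(1)$ have opposite signs). You in fact obtain the sharper constant $\tfrac{1}{2\sin\alpha}$, whereas the paper only bounds $|\tau(1)-\tau(0)|^2\geq 2\sin^2\alpha$ to get $\tfrac{1}{\sqrt{2}\sin\alpha}$; both yield the stated inequality.
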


\begin{proof}
	We denote by $\vec{\kappa} = \nicefrac{\partial^2_\sigma c}{(\mathcal{L}[c])^2}$ and $\kappa = \langle \nicefrac{\partial^2_\sigma c}{(\mathcal{L}[c])^2}, R \tau \rangle$ the curvature vector and the scalar curvature, respectively. Here, $\langle \cdot, \cdot \rangle$ denotes the euclidean inner product on $\mathbb{R}^2$ and $R$ the matrix which rotates vectors counterclockwise in $\mathbb{R}^2$ by the angle $\nicefrac{\pi}{2}$. We deduce 
	\begin{align*}
	|\tau(1) - \tau(0)| = \left|\int_0^1 \partial_\sigma \tau(x) \dd x \right| = \left|\int_0^1 \mathcal{L}[c] \vec{\kappa}[c](x) \dd x \right| \leq \mathcal{L}[c] \|\kappa[c]\|_{C([0, 1])},
	\end{align*}
	where we used that $\langle \vec{\kappa}, \tau \rangle = 0$. 
	Moreover, we have
	\begin{align*}
	|\tau(1) - \tau(0)|^2 = 1 - 2 \langle \tau(1), \tau(0) \rangle + 1 = 2 - 2 (\cos \alpha)^2 + 2 (\sin \alpha)^2 \geq 2 (\sin \alpha)^2 > 0, 
	\end{align*}
	for $\alpha \in (0, \pi)$. Combining both estimates, we deduce the claim.
\end{proof}
%%%%%%%%%%%%%%%%%%%%%%%%%%%%%%%%%%%%%%%%%%%%%%%%%%%%%%%%%%%%%%%%%%%%%%%%%%%%%%%%%%%%%%%%%%%%%%%%%%%%%%
%%%%%%%%%%%%%%%%%%%%%%%%%%%%%%%%%%%%%%%%%%%%%%%%%%%%%%%%%%%%%%%%%%%%%%%%%%%%%%%%%%%%%%%%%%%%%%%%%%%%%%
%%%%%%%%%%%%%%%%%%%%%%%%%%%%%%%%%%%%%%%%%%%%%%%%%%%%%%%%%%%%%%%%%%%%%%%%%%%%%%%%%%%%%%%%%%%%%%%%%%%%%%

\section{Reduction of the Geometric Evolution Equation to a Partial Differential Equation} \label{rog}
In order to reduce the geometric evolution equation to a partial differential equation on a fixed interval, we employ a parametrization which is similar to the one established in \protect{\cite{vogel}}: Let $\Phi^*: [0, 1] \rightarrow \mathbb{R}^2$ be a regular $C^5$-curve parametrized proportional to arc length, such that $\Lambda := \Phi^* ([0,1])$ fulfill the conditions
\begin{align}
\Phi^*(\sigma) &\in \mathbb{R} \times  \{0\} && \text{ for } \sigma \in \{0, 1\}, \nonumber \\
\measuredangle \left({n}_{\Lambda}(\sigma),  \begin{pmatrix} 0 \\ -1\end{pmatrix} \right) &= \pi - \alpha && \text{ for } \sigma \in \{0, 1\}, \nonumber \\
{\kappa}_{\Lambda} (\sigma) &= 0 && \text{ for } \sigma \in \{0, 1\},
\label{bound1}
\end{align}
where $\tau_{\Lambda}(\sigma) := \nicefrac{\partial_\sigma \Phi^* (\sigma)}{\mathcal{L}[\Phi^* ]}$
and $n_{\Lambda}(\sigma) := R\tau_{\Lambda}(\sigma)$ are the unit tangent and unit normal vector of $\Lambda$ at the point $\Phi^*(\sigma)$ for $\sigma \in [0, 1]$, respectively. Here, $R$ denotes the counterclockwise $\nicefrac{\pi}{2}$-rotation matrix. Furthermore, the curvature vector of $\Lambda$ at $\Phi^*(\sigma)$ is given by $\vec{\kappa}_{\Lambda}(\sigma) := \nicefrac{\partial^2_\sigma \Phi^* (\sigma)}{(\mathcal{L}[\Phi^* ])^2}$ for $\sigma \in [0, 1]$. \\

For a sufficiently small $d$, curvilinear coordinates are defined as
\begin{align}
\Psi: [0, 1] \times (- d, d) & \rightarrow \mathbb{R}^2 \nonumber \\
(\sigma, q) & \mapsto \Phi^*(\sigma) + q \left(n_{\Lambda}(\sigma) + {\cot{\alpha}} \eta(\sigma) \tau_{\Lambda} (\sigma) \right), \label{curvilin}
\end{align}
where the function $\eta: [0, 1] \rightarrow [-1, 1]$ is given by
\begin{align}
\eta(x) := \begin{cases}
-1 & \textrm{ for } 0\leq x < \frac{1}{6}\\
0 & \textrm{ for } \frac{2}{6} \leq x < \frac{4}{6}\\
1 & \textrm{ for } \frac{5}{6} \leq x \leq 1 \\
\textrm{arbitrary} & \textrm{ else},
\end{cases}
\label{eta}
\end{align}
such that it is monotonically increasing and smooth. Note, that if $\alpha = \nicefrac{\pi}{2}$, then $\cot \alpha =0$ and the second summand in the definition of $\Psi$ vanishes, cf.\ \eqref{curvilin}. 

\begin{rem} \label{smooth}
	\begin{enumerate}
		\item \label{linf} It is trivial that $q \mapsto \Psi(\sigma, q)$ is smooth for $\sigma \in [0, 1]$. Since $\Phi^*$ and $\eta$ are functions of class $C^5$, it follows that $\Psi \in C^4([0, 1] \times (-d, d))$ and
		\begin{align*}
		\|\Psi\|_{C^4([0, 1] \times (-d, d))} \leq C(\alpha, \Phi^*, \eta, |d|).
		\end{align*}
		\item The tangential part is weighted by the function $\eta$, which assures that $[\Psi(\sigma, q)]_2 = 0$ for $\sigma \in \{0, 1\}$ and each $q \in (-d, d)$. This is important, since we want the solution to have its boundary points on the real axis, cf.\ \eqref{g11}. The tangential part is constant in a neighborhood of the boundary points and it vanishes in the middle of the curve.
	\end{enumerate}
\end{rem}

In the following, we consider functions
\begin{align*}
\rho: [0,T) \times [0, 1] \rightarrow (- d, d), \;\; (t, \sigma) \mapsto \rho(t, \sigma)
\end{align*}
and define 
\begin{align}
\Phi(t, \sigma) := \Psi(\sigma, \rho(t, \sigma)). \label{kurve}
\end{align}
An evolving curve is now given by 
\begin{align}
\Gamma_t := \{\Phi(t, \sigma) | \; \sigma \in [0, 1] \},
\label{par}
\end{align}
where we obtain $\Phi(t, \sigma) \in \mathbb{R} \times \{0\}$ for $\sigma \in \{0, 1\}$, $t \in [0, T)$, i.e.\ \eqref{g11}, by construction. \\ 

Next, we want to express \eqref{g1}--\eqref{g3} with the help of the parametrization induced by \eqref{par}. 
In order to improve readability, we omit the arguments at some points, e.g.\ $\Psi_\sigma = \Psi_\sigma(\sigma, \rho(t, \sigma))$ and $\rho = \rho(t, \sigma)$. Assuming $|\Phi_{\sigma}(t, \sigma)| \neq 0$, we derive for the arc length parameter $s$ of $\Gamma_t$
\begin{align*}
\frac{ds}{d\sigma} = |\Phi_\sigma| = \sqrt{|\Psi_\sigma|^2 + 2\langle \Psi_\sigma, \Psi_q \rangle \partial_{\sigma} \rho + |\Psi_q|^2(\partial_{\sigma} \rho)^2} =: J(\rho) = J(\sigma, \rho, \partial_\sigma \rho),
\end{align*}
where $|\cdot|$ and $\langle \cdot, \cdot \rangle$ denote the Euclidean norm and the inner product in $\mathbb{R}^2$, respectively. 
Thus, the unit tangent $\tau_{\Gamma_t}$ and the outer unit normal $n_{\Gamma_t}$ of the curve $\Gamma_t$ are given by
\begin{align*}
\tau_{\Gamma_t} = \frac{1}{J(\rho)} \Phi_{\sigma}= \frac{1}{J(\rho)} (\Psi_\sigma + \Psi_q \partial_{\sigma} \rho), && n_{\Gamma_t} = R\tau_{\Gamma_t} = \frac{1}{J(\rho)} R \Phi_\sigma.
\end{align*}
For the scalar normal velocity $V$ of $\Gamma_t$, we have
\begin{align*}
V &= \langle \Phi_t , n_{\Gamma_t} \rangle = \frac{1}{J(\rho)} \langle \Phi_t , R \Phi_{\sigma}\rangle 
= \frac{1}{J(\rho)} \langle \Psi_q , R \Psi_\sigma \rangle \rho_t.
\end{align*}
Moreover, the Laplace-Beltrami operator on $\Gamma_t$ as a function in $\rho$ is defined by
\begin{align*}
\Delta(\rho) := \partial_s^2 = \frac{1}{J(\rho)} \partial_\sigma \left(\frac{1}{J(\rho)} \partial_\sigma \right) = \frac{1}{J(\rho)} \partial_\sigma \left(\frac{1}{J(\rho)} \right) \partial_\sigma + \frac{1}{(J(\rho))^2}  \partial_\sigma^2.
\end{align*}
Thereby, the scalar curvature of $\Gamma_t$ as function in $\rho$ can be expressed by
\begin{align}
\kappa(\rho) &= \frac{1}{(J(\rho))^3} \langle \Psi_q , R \Psi_\sigma \rangle \partial^2_{\sigma} \rho + U,
\label{kappa}
\end{align}
where $U = U(\sigma, \rho, \partial_{\sigma} \rho)$ denotes terms of the form
\begin{align}
U(\sigma, \rho, \partial_{\sigma} \rho) = C \left(J(\rho)\right)^k \left(\prod_{i = 0}^{p} \left\langle \Psi^{\beta_i}_{(\sigma, q)} , R \Psi^{\gamma_i}_{(\sigma, q)} \right\rangle (\sigma, \rho) \right) (\partial_{\sigma} \rho)^r,
\label{u}
\end{align}
with $C \in \mathbb{R}$, $k \in \mathbb{Z}$, $p, r \in \mathbb{N}_0$, and $\beta_i, \gamma_i \in \mathbb{N}_0^2$, such that $|\beta_i|, |\gamma_i| \geq 1$ and $|\beta_i| + |\gamma_i| \leq 3$ for all $i \in \{0, \dots, p\}$, cf.\ Appendix A.1 of \protect{\cite{butzdiss}} for more details on this derivation and the following ones. 
Consequently, the first derivative of the curvature is given by
\begin{align}
\partial_{s} \kappa(\rho) &= \frac{1}{(J(\rho))^4} \langle \Psi_q , R \Psi_\sigma \rangle \partial^3_{\sigma} \rho 
+ T \left(\partial^2_{\sigma} \rho \right)^2 + T \partial^2_{\sigma} \rho + T,
\label{ks}
\end{align}
where the prefactors $T = T(\sigma, \rho, \partial_{\sigma} \rho)$ denote terms of the form
\begin{align}
T(\sigma, \rho, \partial_{\sigma} \rho) = C \left(J(\rho)\right)^k \left(\prod_{i = 0}^{p} \left\langle \Psi^{\beta_i}_{(\sigma, q)} , R \Psi^{\gamma_i}_{(\sigma, q)} \right\rangle (\sigma, \rho) \right) (\partial_{\sigma} \rho)^r,
\label{t}
\end{align}
with $C \in \mathbb{R}$, $k \in \mathbb{Z}$, $p, r \in \mathbb{N}_0$, and $\beta_i, \gamma_i \in \mathbb{N}_0^2$, such that $|\beta_i|, |\gamma_i| \geq 1$ and $|\beta_i| + |\gamma_i| \leq 4$ for all $i \in \{0, \dots, p\}$. Moreover, we have
\begin{align}
\partial^2_{s} \kappa(\rho) &= \frac{1}{(J(\rho))^5} \langle \Psi_q , R \Psi_\sigma \rangle \partial^4_{\sigma} \rho 
+ \tilde{S} \partial^3_{\sigma} \rho \partial^2_{\sigma} \rho 
+ \tilde{S} \partial^3_{\sigma} \rho 
+ \tilde{S} \left(\partial^2_{\sigma} \rho \right)^3 + \tilde{S} \left(\partial^2_{\sigma} \rho \right)^2 + \tilde{S} \partial^2_{\sigma} \rho + \tilde{S},  \label{kss}
\end{align}
with the prefactors of the form
\begin{align}
\tilde{S}(\sigma, \rho, \partial_{\sigma} \rho) := C \left(J(\rho)\right)^k \left(\prod_{i = 0}^{p} \left\langle \Psi^{\beta_i}_{(\sigma, q)} , R \Psi^{\gamma_i}_{(\sigma, q)} \right\rangle (\sigma, \rho) \right) (\partial_{\sigma} \rho)^r,
\label{tildes}
\end{align}
with $C \in \mathbb{R}$, $k \in \mathbb{Z}$, $p, r \in \mathbb{N}_0$, and $\beta_i, \gamma_i \in \mathbb{N}_0^2$, such that $|\beta_i|, |\gamma_i| \geq 1$ and $|\beta_i| + |\gamma_i| \leq 5$ for all $i \in \{0, \dots, p\}$.
We assume that $\nicefrac{1}{J(\rho)} \langle \Psi_q , R \Psi_\sigma \rangle \neq 0$ and obtain by \eqref{g1} the equation
\begin{align}
\rho_t = - \frac{J(\rho)}{\langle \Psi_q, R \Psi_\sigma \rangle} \Delta (\rho) \kappa(\rho) && \textrm{ for } \sigma \in (0, 1) \textrm{ and } t > 0.
\label{divide}
\end{align}
Furthermore, the boundary condition \eqref{g2} is represented by 
\begin{align}
\cos (\pi - \alpha) &= \left\langle n_{\Gamma_t}, \begin{pmatrix} 0 \\ -1\end{pmatrix} \right\rangle = \left\langle \tau, R^T \begin{pmatrix} 0 \\ -1\end{pmatrix} \right\rangle 
= \frac{1}{J(\rho)} \left\langle (\Psi_\sigma + \Psi_q \partial_{\sigma} \rho), \begin{pmatrix} -1 \\ 0 \end{pmatrix} \right\rangle. 
\label{schlecht}
\end{align}
Straightforward calculations together with the assumptions on the reference curve $\Phi^*$ show that this is fulfilled if and only if
\begin{align}
\partial_{\sigma} \rho (t, \sigma) = 0 && \textrm{ for } \sigma  \in \{0, 1\} \textrm{ and } t > 0.
\label{anglecon} 
\end{align}
This gives the reformulation of the angle condition.\\

In summary, we have deduced that the problem \eqref{g1}-\eqref{g3} can be expressed by the partial differential equation given in \eqref{prob}, if $\Phi_\sigma(\sigma, t) \neq 0$ holds true. To this end, we introduce the following lemma:
\begin{lem} \label{regu}
	Let $\rho: [0, 1] \rightarrow \mathbb{R}$ satisfy the bound
	\begin{align}
	\|\rho \|_{C([0, 1])} &< \frac{1}{2 \|\kappa_{\Lambda}\|_{C(\bar{I})} \left( 1 + (\cot \alpha)^2 + \hat{C}|\cot{\alpha}| \|\eta' \|_{C([0, 1])}  \right)} =: K_0(\alpha, \Phi^*, \eta),
	\label{kleinrho} \\
	\intertext{and in the case $\alpha \neq \frac{\pi}{2}$ additionally}
	\|\partial_\sigma \rho \|_{C([0, 1])} &< \frac{\mathcal{L}[\Phi^*]}{12 |\cot{\alpha}|} =: K_1(\alpha, \Phi^*), \label{kleindrho}
	\end{align}
	where $\hat{C} := \sqrt{2}\sin \alpha > 0$. Then $J(\rho) > 0$ and $[0, 1] \ni \sigma \mapsto \Psi(\sigma, \rho(\sigma))$ is a regular parametrization.\\ 
	In particular, if \eqref{kleinrho} and \eqref{kleindrho} are fulfilled for $\tfrac{2}{3}K_0$ and $\tfrac{2}{3}K_1$, respectively, then there exists a $C(\alpha, \Phi^*, \eta) > 0$, such that 
	\begin{align}
	J(\rho) > C(\alpha, \Phi^*, \eta) > 0.
	\label{jrho}
	\end{align}
\end{lem}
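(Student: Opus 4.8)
The plan is to read off $J(\rho)$ as the length of the tangent vector $\Phi_\sigma$ of the mapped curve $\sigma\mapsto\Psi(\sigma,\rho(\sigma))$ and to show that this length stays strictly positive; since a $C^1$-curve is a regular parametrization exactly when its derivative never vanishes, the regularity assertion is equivalent to $J(\rho)>0$. First I would differentiate \eqref{curvilin} composed with $\rho$, i.e.\ compute $\Phi_\sigma=\Psi_\sigma+\Psi_q\,\partial_\sigma\rho$, and expand in the orthonormal moving frame $\{\tau_\Lambda,n_\Lambda\}$ of the reference curve. Using that $\Phi^*$ is parametrized proportionally to arc length together with the Frenet-type identities $\partial_\sigma\tau_\Lambda=\mathcal{L}[\Phi^*]\,\kappa_\Lambda\,n_\Lambda$ and $\partial_\sigma n_\Lambda=-\mathcal{L}[\Phi^*]\,\kappa_\Lambda\,\tau_\Lambda$, one obtains
\begin{align*}
\Phi_\sigma = \big(\mathcal{L}[\Phi^*](1-\rho\,\kappa_\Lambda)+\cot\alpha\,\partial_\sigma(\eta\,\rho)\big)\,\tau_\Lambda + \big(\partial_\sigma\rho+\cot\alpha\,\eta\,\rho\,\mathcal{L}[\Phi^*]\,\kappa_\Lambda\big)\,n_\Lambda,
\end{align*}
where I have collected $\rho\cot\alpha\,\eta'+\cot\alpha\,\eta\,\partial_\sigma\rho=\cot\alpha\,\partial_\sigma(\eta\rho)$. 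Since $J(\rho)=|\Phi_\sigma|\geq|\langle\Phi_\sigma,\tau_\Lambda\rangle|$, it suffices to bound the $\tau_\Lambda$-component from below.

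The estimate then proceeds by a reverse triangle inequality applied to this $\tau_\Lambda$-component: the leading term is $\mathcal{L}[\Phi^*]$, and the corrections are $\mathcal{L}[\Phi^*]\,\rho\,\kappa_\Lambda$, controlled by $\|\rho\|_{C([0,1])}\|\kappa_\Lambda\|_{C([0,1])}$, and $\cot\alpha\,\partial_\sigma(\eta\rho)$, controlled by $|\cot\alpha|\big(\|\rho\|_{C([0,1])}\|\eta'\|_{C([0,1])}+\|\partial_\sigma\rho\|_{C([0,1])}\big)$ after using $\|\eta\|_\infty\leq1$. Here the smallness hypotheses \eqref{kleinrho}--\eqref{kleindrho}, namely $\|\rho\|_{C([0,1])}<K_0$ and (for $\alpha\neq\tfrac\pi2$) $\|\partial_\sigma\rho\|_{C([0,1])}<K_1$, enter: $K_1$ is tailored so that $|\cot\alpha|\,\|\partial_\sigma\rho\|_{C([0,1])}$ is a fixed fraction of $\mathcal{L}[\Phi^*]$, while $K_0$ is designed so that the $\rho$-proportional corrections are absorbed into the leading term. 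Because $K_0$ is phrased through $\|\kappa_\Lambda\|_{C([0,1])}$, whereas the corrections are naturally measured against $\mathcal{L}[\Phi^*]$, the decisive input is the geometric estimate Lemma \ref{kappabound}: the boundary conditions \eqref{bound1} force $\tau_\Lambda=(\cos\alpha,\pm\sin\alpha)^{\mathrm T}$ at $\sigma\in\{0,1\}$, so Lemma \ref{kappabound} applies to $\Phi^*$ and yields $\mathcal{L}[\Phi^*]\,\|\kappa_\Lambda\|_{C([0,1])}\geq\sqrt2\sin\alpha=\hat C$, i.e.\ $1/\mathcal{L}[\Phi^*]\leq\|\kappa_\Lambda\|_{C([0,1])}/\hat C$. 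Substituting this into the correction terms rewrites them as multiples of $\|\rho\|_{C([0,1])}\|\kappa_\Lambda\|_{C([0,1])}$, and the definition of $K_0$ (whose denominator carries precisely the summands $1$, $(\cot\alpha)^2$ and $\hat C|\cot\alpha|\|\eta'\|$) is then exactly what makes $\langle\Phi_\sigma,\tau_\Lambda\rangle>0$; hence $J(\rho)>0$ and the parametrization is regular.

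For the quantitative lower bound \eqref{jrho} I would rerun the identical chain of inequalities with the scaled thresholds $\tfrac23K_0$ and $\tfrac23K_1$ in place of $K_0,K_1$. Each correction term then acquires an extra factor $\tfrac23$, so the negative contributions to $\langle\Phi_\sigma,\tau_\Lambda\rangle$ leave a definite gap below $\mathcal{L}[\Phi^*]$; this yields a constant $C(\alpha,\Phi^*,\eta)>0$, depending only on the stated data, with $J(\rho)\geq\langle\Phi_\sigma,\tau_\Lambda\rangle\geq C>0$.

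I expect the main obstacle to be the uniform-in-$\alpha$ control of the tangential correction $\cot\alpha\,\partial_\sigma(\eta\rho)$ produced by the tilt in the curvilinear coordinates \eqref{curvilin}. This term degenerates as $\alpha\to0,\pi$, and it is exactly what forces the precise shape of $K_0$ and $K_1$ and what renders the comparison of $\mathcal{L}[\Phi^*]$ with $\|\kappa_\Lambda\|_{C([0,1])}$ through Lemma \ref{kappabound} indispensable; by contrast the case $\alpha=\tfrac\pi2$ is immediate, since then $\cot\alpha=0$ and only the curvature correction $\mathcal{L}[\Phi^*]\,\rho\,\kappa_\Lambda$ remains, handled by $K_0=\tfrac{1}{2\|\kappa_\Lambda\|_{C([0,1])}}$ alone.
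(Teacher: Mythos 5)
Your argument is correct and matches the paper's own (which only sketches the computation and defers the details to Lemma 5.1.2 of the thesis): expand $\Phi_\sigma=\Psi_\sigma+\Psi_q\partial_\sigma\rho$ in the frame $\{\tau_\Lambda,n_\Lambda\}$ via the Frenet identities, estimate directly, and use Lemma \ref{kappabound} to trade $1/\mathcal{L}[\Phi^*]$ for $\|\kappa_\Lambda\|_{C(\bar I)}$; bounding only the $\tau_\Lambda$-component from below is a harmless simplification of the paper's ``$|\Phi_\sigma|^2>0$''. One small caveat: Lemma \ref{kappabound} gives $1/\mathcal{L}[\Phi^*]\le \|\kappa_\Lambda\|_{C(\bar I)}/\hat C$, so the $\eta'$-correction is naturally controlled by $\hat C^{-1}|\cot\alpha|\,\|\eta'\|_{C([0,1])}$ rather than by the term $\hat C|\cot\alpha|\,\|\eta'\|_{C([0,1])}$ appearing in the stated denominator of $K_0$ -- an apparent typo in the paper that your write-up silently inherits when you assert that the definition of $K_0$ is \emph{exactly} what closes the estimate.
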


\begin{proof}
Using the identities
\begin{align*}
\partial_{\sigma} \Phi^* = \mathcal{L}[\Phi^* ] \tau_{\Lambda},  && \partial_{\sigma} \tau_{\Lambda}  = \mathcal{L}[\Phi^* ] \kappa n_{\Lambda}, && \partial_{\sigma} n_{\Lambda} = - \mathcal{L}[\Phi^* ] \kappa \tau_{\Lambda},
\end{align*}	
we see by direct estimates that $|\Phi_{\sigma}(\sigma)|^2 > 0$, see Lemma 5.1.2 in \protect{\cite{butzdiss}} for details.  
\end{proof}

We give a sketch of the representation in Figure \ref{4}.
\begin{center}\vspace{0 cm}
	\scalebox{1}{%% Creator: Inkscape 0.48.3.1, www.inkscape.org
%% PDF/EPS/PS + LaTeX output extension by Johan Engelen, 2010
%% Accompanies image file '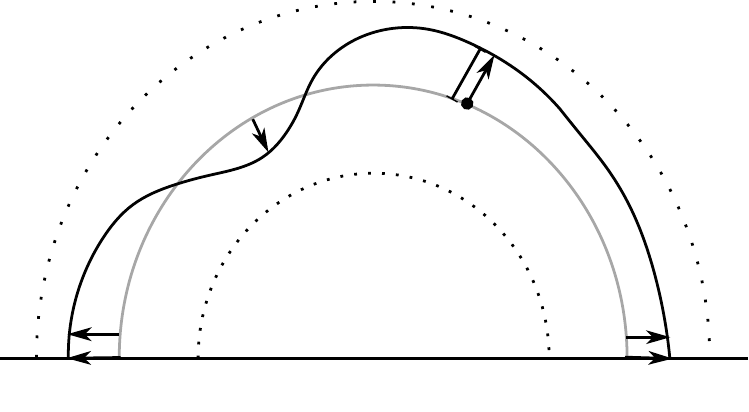' (pdf, eps, ps)
%%
%% To include the image in your LaTeX document, write
%%   \input{<filename>.pdf_tex}
%%  instead of
%%   \includegraphics{<filename>.pdf}
%% To scale the image, write
%%   \def\svgwidth{<desired width>}
%%   \input{<filename>.pdf_tex}
%%  instead of
%%   \includegraphics[width=<desired width>]{<filename>.pdf}
%%
%% Images with a different path to the parent latex file can
%% be accessed with the `import' package (which may need to be
%% installed) using
%%   \usepackage{import}
%% in the preamble, and then including the image with
%%   \import{<path to file>}{<filename>.pdf_tex}
%% Alternatively, one can specify
%%   \graphicspath{{<path to file>/}}
%% 
%% For more information, please see info/svg-inkscape on CTAN:
%%   http://tug.ctan.org/tex-archive/info/svg-inkscape
%%
\begingroup%
  \makeatletter%
  \providecommand\color[2][]{%
    \errmessage{(Inkscape) Color is used for the text in Inkscape, but the package 'color.sty' is not loaded}%
    \renewcommand\color[2][]{}%
  }%
  \providecommand\transparent[1]{%
    \errmessage{(Inkscape) Transparency is used (non-zero) for the text in Inkscape, but the package 'transparent.sty' is not loaded}%
    \renewcommand\transparent[1]{}%
  }%
  \providecommand\rotatebox[2]{#2}%
  \ifx\svgwidth\undefined%
    \setlength{\unitlength}{215.55bp}%
    \ifx\svgscale\undefined%
      \relax%
    \else%
      \setlength{\unitlength}{\unitlength * \real{\svgscale}}%
    \fi%
  \else%
    \setlength{\unitlength}{\svgwidth}%
  \fi%
  \global\let\svgwidth\undefined%
  \global\let\svgscale\undefined%
  \makeatother%
  \begin{picture}(1,0.54952665)%
    \put(0,0){\includegraphics[width=\unitlength]{height.pdf}}%
    \put(0.54663831,0.34323015){\color[rgb]{0,0,0}\makebox(0,0)[lb]{\smash{$\Phi^*(\sigma)$}}}%
    \put(0.48816363,0.45689833){\color[rgb]{0,0,0}\makebox(0,0)[lb]{\smash{$\rho(\sigma)$}}}%
  \end{picture}%
\endgroup%
}
	\captionof{figure}{Representation of a curve by curvilinear coordinates and a height function.}
	\label{4}
\end{center}\vspace{0 cm}

In the following, we use the notation from \eqref{nota} with $m = 2$. By construction, it follows:
\begin{rem} \label{initial}
	\begin{enumerate}
		\item \label{initial1} Let $\rho_0$ fulfill the assumptions of Theorem \ref{local}. Then $\rho_0 \in X_{\mu}$, $\mu \in (\nicefrac78, 1 ]$, implies $\rho_0 \in C^{1}\big(\bar{I}\big)$ by Remark \ref{afterunab},
		as $4(\mu - \nicefrac{1}{2}) - \nicefrac{1}{2} > 1$. It follows by Lemma \ref{regu} that 
		$J(\rho_0) > C(\alpha, \Phi^*, \eta) > 0$ and that
		\begin{align*}
		[0, 1] \ni \sigma \mapsto \Phi(0, \sigma) = \Psi(\sigma, \rho_0(\sigma))
		\end{align*} 
		is a regular parametrization of the initial curve corresponding to $\rho_0$.
		It holds by construction that $[\Phi(0, \sigma)]_2 = [\Psi(\sigma, \rho_0(\sigma))]_2 = 0$ for $\sigma \in \{0, 1\}$.
		\item Note that by \eqref{anglecon} the compatibility condition \eqref{komp} implies
		\begin{align*}
		\measuredangle \left({n}_{\Gamma_0}(\sigma),  \begin{pmatrix} 0 \\ -1\end{pmatrix} \right) &= \pi - \alpha && \text{ for } \sigma \in \{0, 1\}, \nonumber
		\end{align*}
		where $\Gamma_0:= \Phi(0, \bar{I})$. Thus, the $\alpha$-angle condition holds true for the initial curve. 
	\end{enumerate}
\end{rem}

Using the previous considerations, we are able to reduce the geometric evolution equation to a quasilinear partial differential equation on a fixed interval: To this end, the representations derived in \eqref{kss}, \eqref{ks}, and \eqref{anglecon} are used and plugged into \eqref{prob}. Putting the highest order terms on the left-hand side and remaining ones on the right-hand side, we formally obtain that problem \eqref{prob} is an initial value problem for a quasilinear parabolic partial differential equation of the form
\begin{align}
\rho_t + a(\sigma, \rho, \partial_\sigma \rho) \partial_\sigma^4 \rho &= f(\rho, \partial_\sigma \rho, \partial_\sigma^2 \rho, \partial_\sigma^3 \rho) && \textrm{ for } (t, \sigma) \in (0, T) \times [0, 1],\nonumber \\
b_1(\sigma) \partial_\sigma \rho&= - g_1 && \textrm{ for } \sigma \in \{0, 1\} \textrm{ and } t \in (0, T),\nonumber  \\
b_2(\sigma, \rho, \partial_\sigma \rho) \partial_\sigma^3 \rho &= - g_2(\rho, \partial_\sigma \rho, \partial^2_\sigma \rho) && \textrm{ for } \sigma \in \{0, 1\} \textrm{ and } t \in (0, T) ,\nonumber  \\
\rho |_{t=0} &= \rho_0 &&  \textrm{ in } [0, 1].
\label{pde}
\end{align}
Here, the coefficients on the left-hand side are given by
\begin{align}
a(\sigma, \rho, \partial_\sigma \rho) : = \frac{1}{(J(\rho))^4}, &&
b_1(\sigma) := 1, &&  b_2(\sigma, \rho, \partial_\sigma \rho) := \frac{1}{(J(\rho))^4} \langle \Psi_q , R \Psi_\sigma \rangle.
\label{coeffs}
\end{align}
The right-hand side of the first equation is defined by
\begin{align}
f(\sigma, \rho, \partial_\sigma \rho, \partial_\sigma^2 \rho, \partial_\sigma^3 \rho) &:=
S \partial^3_{\sigma} \rho \partial^2_{\sigma} \rho 
+ S \partial^3_{\sigma} \rho + S \left(\partial^2_{\sigma} \rho \right)^3 + S \left(\partial^2_{\sigma} \rho \right)^2 + S \partial^2_{\sigma} \rho + S,
\label{darf}
\end{align}
where the prefactors $S = S(\sigma, \rho, \partial_{\sigma} \rho)$ denote terms of the form
\begin{align}
S(\sigma, \rho, \partial_{\sigma} \rho) := \frac{- J(\rho)}{\langle \Psi_q , R \Psi_\sigma \rangle (\sigma, \rho) } \tilde{S}(\sigma, \rho, \partial_{\sigma} \rho) && \textrm{ for } \tilde{S} \textrm{ of the form } \eqref{tildes}.
\label{s}
\end{align}
Moreover, the right-hand sides of the second and third equation are given by
\begin{align}
g_1(\sigma) := 0, &&
g_2(\sigma, \rho, \partial_\sigma \rho, \partial^2_\sigma \rho ) := T \left(\partial^2_{\sigma} \rho \right)^2 + T \partial^2_{\sigma} \rho + T,
\label{darg}
\end{align}
where the prefactors $T=T(\sigma, \rho, \partial_\sigma \rho)$ are introduced in \eqref{t}. \\

Deriving the motion law, we assumed $\nicefrac{1}{J(\rho)} \langle \Psi_q , R \Psi_\sigma \rangle(\sigma, \rho) \neq 0$, see \eqref{divide}. As confirmation, the following lemma is proven:
\begin{lem} \label{regu1}
	Let $\rho_0 \in X_{\mu}$, such that the condition \eqref{small} is satisfied. Furthermore, let $K \in \mathbb{R}^+$.
	Then, there exists a $\tilde{T} = \tilde{T}(\alpha, \Phi^*, \eta, K, R)$  with $\|\rho_0\|_{X_{\mu}} \leq R$, such that for $\rho \in \mathbb{E}_{\mu, T}$ fulfilling $\rho_{|t=0} = \rho_0$ and $\|\rho\|_{\mathbb{E}_{\mu, T}} \leq K$ for $0 < T < \tilde{T}$ it holds: $\rho$ fulfills the bounds \eqref{kleinrho} and \eqref{kleindrho} 
	for $\tfrac{2}{3}K_0$ and $\tfrac{2}{3}K_1$. In particular,
	\begin{align}
	J(\rho) > C(\alpha, \Phi^*, \eta) > 0 && \textrm{ for } \sigma \in [0, 1] \textrm{ and all } 0 \leq t \leq T \textrm{ with } 0 < T < \tilde{T},
	\label{klein}
	\end{align}
	where $C(\alpha, \Phi^*, \eta)$ is given by \eqref{jrho}. Additionally, $[0, 1] \ni \sigma \mapsto \Psi(\sigma, \rho(t, \sigma))$ is a regular parametri\-zation for all $0 \leq t \leq T$ with $0 < T < \tilde{T}$. 
\end{lem}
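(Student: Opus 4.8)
The plan is to exploit the time-continuity of $\rho$ with values in a space that controls $C^1(\bar I)$ together with the fact that $\rho_{|t=0}=\rho_0$ already satisfies the strict bounds in \eqref{small}. The strategy is a standard continuity/smallness argument: I would first invoke Lemma~\ref{unabneu}.\ref{unab1a}, which gives the embedding $\mathbb{E}_{\mu,T}\hookrightarrow C^{\bar\alpha}(\bar J; C^1(\bar I))$ with an operator-norm constant $C(T_0)$ that is \emph{uniform} in $T\in(0,T_0]$. This is the key point: because the constant does not blow up as $T\to 0$, the H\"older-in-time seminorm of $\rho$ with values in $C^1(\bar I)$ is controlled by $\|\rho\|_{\mathbb{E}_{\mu,T}}\le K$, so for small $T$ the quantities $\rho(t,\cdot)$ and $\partial_\sigma\rho(t,\cdot)$ cannot move far (in the $C(\bar I)$-norm) away from their initial values $\rho_0$ and $\partial_\sigma\rho_0$.

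Concretely, I would estimate for $t\in[0,T]$
\begin{align*}
\|\rho(t)-\rho_0\|_{C^1(\bar I)} \le [\rho]_{C^{\bar\alpha}(\bar J; C^1(\bar I))}\, t^{\bar\alpha} \le C(T_0)\big(K + R\big)\, T^{\bar\alpha},
\end{align*}
using $\rho(0)=\rho_0$, the uniform embedding constant, and $\|\rho_0\|_{X_\mu}\le R$. Since $\rho_0$ satisfies $\|\rho_0\|_{C(\bar I)}<\tfrac{K_0}{3}$ and $\|\partial_\sigma\rho_0\|_{C(\bar I)}<\tfrac{K_1}{3}$ strictly by \eqref{small}, the triangle inequality gives $\|\rho(t)\|_{C(\bar I)}<\tfrac{K_0}{3}+C(T_0)(K+R)T^{\bar\alpha}$ and likewise for $\partial_\sigma\rho$. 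Choosing $\tilde T=\tilde T(\alpha,\Phi^*,\eta,K,R)$ small enough that $C(T_0)(K+R)\tilde T^{\bar\alpha}\le\tfrac{K_0}{3}$ and $\le\tfrac{K_1}{3}$ simultaneously, I obtain for all $0<T<\tilde T$ and all $t\in[0,T]$ the bounds $\|\rho(t)\|_{C(\bar I)}<\tfrac{2}{3}K_0$ and $\|\partial_\sigma\rho(t)\|_{C(\bar I)}<\tfrac{2}{3}K_1$, which are exactly the hypotheses of the second part of Lemma~\ref{regu}.

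Having established these bounds, the conclusions follow directly from Lemma~\ref{regu}: applying its ``in particular'' statement with $\tfrac{2}{3}K_0$ and $\tfrac{2}{3}K_1$ in place of $K_0$ and $K_1$ yields the strict positivity $J(\rho(t,\cdot))>C(\alpha,\Phi^*,\eta)>0$ uniformly for $\sigma\in[0,1]$ and all $t\in[0,T]$, with $C(\alpha,\Phi^*,\eta)$ the constant from \eqref{jrho}. The same lemma then guarantees that $\sigma\mapsto\Psi(\sigma,\rho(t,\sigma))$ is a regular parametrization for each such $t$, which is precisely \eqref{klein} and the final assertion.

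The main obstacle is ensuring that the time-dependence is genuinely controlled uniformly down to $T=0$: a naive embedding into $C(\bar J; C^1(\bar I))$ would only give boundedness, not the \emph{closeness} to the initial data that lets me pass from the factor $\tfrac{1}{3}$ in \eqref{small} to the factor $\tfrac{2}{3}$ needed for Lemma~\ref{regu}. This is resolved by using the H\"older-in-time modulus from Lemma~\ref{unabneu}.\ref{unab1a} (rather than mere continuity), whose gain of the positive power $T^{\bar\alpha}$ is what forces the deviation to be small for small $T$, and by the uniformity of the embedding constant $C(T_0)$ over $T\in(0,T_0]$, so that shrinking $T$ never enlarges the constant. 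Everything else is bookkeeping with the triangle inequality and the strict initial inequalities.
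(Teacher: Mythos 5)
Your argument is correct and coincides with the paper's own proof: both invoke the $T$-uniform embedding $\mathbb{E}_{\mu,T}\hookrightarrow C^{\bar\alpha}(\bar J;C^1(\bar I))$ from Lemma \ref{unabneu}.\ref{unab1a} to bound $\|\partial_\sigma^i\rho(t)-\partial_\sigma^i\rho_0\|_{C(\bar I)}$ by $C\,T^{\bar\alpha}(K+R)$ for $i=0,1$, shrink $\tilde T$ so that this deviation is below $\min\{K_0,K_1\}/3$, and then conclude via the triangle inequality and the addendum of Lemma \ref{regu}. No gaps.
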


\begin{proof}
	By the embedding 
	\begin{align*}
	\mathbb{E}_{\mu, T} \hookrightarrow C^{\bar{\alpha}}\left(\bar{J}; C^1 (\bar{I}) \right),
	\end{align*}
	see Lemma \ref{unabneu}.\ref{unab1a}, we have 
	\begin{align*}
	\|\partial^i_\sigma \rho(t) - \partial^i_\sigma \rho_0\|_{C(\bar{I})} \leq T^{\bar{\alpha}} C \left(\|\rho\|_{\mathbb{E}_{\mu, T}} + \|\rho_{| t = 0}\|_{X_\mu} \right), 
	\end{align*}
	where $\bar{\alpha} > 0$, $i \in \{0, 1\}$, and $t \in [0, T]$. Note that the constant $C$ does not depend on $T$. Thus, by choosing $\tilde{T}$ small enough, it holds for $i = 0, 1$
	\begin{align*}
	\|\partial^i_\sigma \rho(t) - \partial^i_\sigma \rho_0\|_{C(\bar{I})} \leq \tilde{T}^{\bar{\alpha}} C \left(K + \|\rho_{| t = 0}\|_{X_\mu} \right) < \frac{\min \{K_0, K_1\}}{3}.
	\end{align*}
	Consequently, 
	\begin{align*}
	\|\partial^i_\sigma \rho(t)\|_{C(\bar{I})} \leq \|\partial^i_\sigma \rho(t) - \partial^i_\sigma \rho_0\|_{C(\bar{I})} + \|\partial^i_\sigma \rho_0\|_{C(\bar{I})} < \frac{2K_i}{3}
	\end{align*}
	is obtained for $i \in \{0, 1\}$. The addendum follows directly by \eqref{jrho} in Lemma \ref{regu}.
\end{proof}

\begin{definition} \label{ass}
	For fixed $K$, we consider the corresponding $\tilde{T}(\alpha, \Phi^*, \eta, K, R) > 0$ with $\|\rho_0\|_{X_{\mu}} \leq R$, which is given in Lemma \ref{regu1}. We set 
	\begin{align*}
	\mathcal{B}_{K, T} := \left\{ \rho \in  \mathbb{E}_{\mu, T} : \|\rho\|_{ \mathbb{E}_{\mu, T}} \leq K, \rho_{|t=0} = \rho_0 \right\} && \textrm{ for } 0 < T < \tilde{T}.
	\end{align*}
\end{definition}

Then, we obtain:
\begin{rem} \label{wohldef}
	\begin{enumerate}
		\item \label{wd1} There exists a $C(\alpha, \Phi^*, \eta, K) > 0$, such that
		\begin{align*}
		\langle \Psi_q , R \Psi_\sigma \rangle(\sigma, \rho)  > C(\alpha, \Phi^*, \eta, K) > 0 &&\textrm{ for } \sigma \in [0, 1] \textrm{ and } \rho \in \mathcal{B}_{K, T} \textrm{ with }  0 < T < \tilde{T}.
		\end{align*}
		Moreover, by direct calculations, we obtain
		\begin{align*}
		\langle \Psi_q, R \Psi_\sigma \rangle (\sigma, \rho) 
		&= \mathcal{L}[\Phi^*] \left[1- \rho \left(\kappa_{\Lambda} - \frac{1}{\mathcal{L}[\Phi^*]}\cot{\alpha} \eta' - (\cot{\alpha})^2 \eta^2 \kappa_{\Lambda} \right) \right]. 
		\end{align*}
		Exploiting Lemma \ref{kappabound}, we deduce for $\rho \in \mathcal{B}_{K, T}$
		\begin{align*}
		\langle \Psi_q, R \Psi_\sigma \rangle (\sigma, \rho)
		&\geq \mathcal{L}[\Phi^*] \left[1- |\rho| \; \|\kappa_{\Lambda}\|_{C(\bar{I})} \left( 1 + \hat{C}|\cot{\alpha} \eta'| + (\cot {\alpha})^2 \right) \right] \\
		&> C(\alpha, \Phi^*, \eta, K) > 0.
		\end{align*}
		\item \label{gross} By the expressions for the derivatives of $\Psi$, we additionally derive
		\begin{align*}
		J(\rho) < C(\alpha, \Phi^*, \eta, K) &&\textrm{ for } \sigma \in [0, 1] \textrm{ and } \rho \in \mathcal{B}_{K, T} \textrm{ with }  0 < T < \tilde{T}.
		\end{align*}
		\item \label{wohldef0} The previous statements hold also true if $\rho \in \mathcal{B}_{K, T}$ is replaced by an initial datum $\rho_0$ fulfilling the assumptions of Theorem \ref{local}.
	\end{enumerate}
\end{rem}

In order to solve the partial differential equation \eqref{pde}, it is linearized and the generated linear problem is solved with optimal regularity.

\section{The Linear Problem} \label{line}

In order to solve the problem \eqref{pde}, we linearize it around the initial datum $\rho_0$. In the following, we want to show that, for a suitable choice of the function spaces, the linearized problem
\begin{align}
\rho_t +  \mathcal{A} \rho &= F(t, \sigma) && \textrm{ for } (t, \sigma) \in (0, T) \times [0, 1], \nonumber \\
\mathcal{B}_1 \rho &= G_1 (t, \sigma) && \textrm{ for } \sigma \in \{0, 1\} \textrm{ and } t \in (0, T), \nonumber \\
\mathcal{B}_2 \rho &= G_2 (t, \sigma) && \textrm{ for } \sigma \in \{0, 1\} \textrm{ and } t \in (0, T), \nonumber \\
\rho(0, \sigma) &= \rho_0(\sigma) && \textrm{ for } \sigma \in [0, 1], \label{linprob}
\end{align}
has a unique solution $\rho$ with optimal regularity. The linear operators $\mathcal{A}$, $\mathcal{B}_1$ and $\mathcal{B}_2$ are given by
\begin{align}
\mathcal{A} &:= 
a(\sigma, \rho_0, \partial_\sigma \rho_0) D^{4} && \textrm{ for } \sigma \in [0, 1],
\nonumber \\
\mathcal{B}_1 &:= 
i b_{1}(\sigma) \operatorname{tr}_{\partial I} D^{1} && \textrm{ for } \sigma \in \{0, 1\},
\nonumber \\
\mathcal{B}_2 &:= 
i^3 b_{2}(\sigma, \rho_0, \partial_\sigma \rho_0) \operatorname{tr}_{\partial I} D^{3} && \textrm{ for } \sigma \in \{0, 1\}, 
\label{operator}
\end{align}
where $D = -i \partial_{\sigma}$ and 
$a, b_1$, and $b_2$ are given by \eqref{coeffs}. Moreover, for a fixed but arbitrary $\bar{\rho} \in \mathcal{B}_{K, T}$, cf.\ Definition \ref{ass}, we set
\begin{align*}
F(t, \sigma) &:= - \left(\frac{1}{(J(\bar{\rho}))^4} - \frac{1} {(J(\rho_0))^4} \right) \partial_\sigma^4 \bar{\rho} + f(\bar{\rho}, \partial_\sigma \bar{\rho}, \partial_\sigma^2 \bar{\rho}, \partial_\sigma^3 \bar{\rho}), \\ 
G_1(t, \sigma) &:= 0, \\
G_2(t, \sigma) &:= - \left( b_2(\bar{\rho}, \partial_\sigma \bar{\rho}) - b_2(\rho_0, \partial_\sigma \rho_0) \right) \partial_\sigma^3 \bar{\rho} - g_2(\bar{\rho}, \partial_\sigma \bar{\rho}, \partial_\sigma^2 \bar{\rho}),
\end{align*}
and $\tilde{G} := (G_1, G_2)$. After solving the linear problem \eqref{linprob} for a general $\bar{\rho} \in \mathcal{B}_{K, T}$, we will prove the well-posedness of the partial differential equation \eqref{pde} by a fixed point argument. \\

The existence of a unique solution for the linearized problem \eqref{linprob} will be proven by applying the maximal regularity result stated in Theorem 2.1 of \protect{\cite{meyries}}, see Theorem 2.2.1 in \protect{\cite{butzdiss}} for a simplified version of this theorem, which is adapted to this situation.
Applying this, we deduce the following theorem:
\begin{satz} [Existence for the Linear Problem \eqref{linprob}] \label{linex}
	Let the assumptions of Theorem \ref{local} hold true and let $K$ and $\tilde{T}$ be given by Definition \ref{ass}.
	Then the linearized problem \eqref{linprob} possesses a unique solution $\rho \in \mathbb{E}_{\mu, T}$ for $0 < T < \tilde{T}$, if and only if 
	\begin{align*}
	F \in \mathbb{E}_{0, \mu},\; \tilde{G} \in \tilde{\mathbb{F}}_{\mu},
	\end{align*}
	and the compatibility condition
	\begin{align}
	\mathcal{B}_1 (0, \cdot, D) \rho_0 = G_1 (0, \cdot) = 0 && \textrm{ for } \sigma \in \{0, 1\}
	\label{comp}
	\end{align}
	is fulfilled.
\end{satz}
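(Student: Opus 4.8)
The plan is to apply the maximal $L_2$-regularity theorem with temporal weights from \cite{meyries} (Theorem 2.1), in the adapted form stated as Theorem 2.2.1 in \cite{butzdiss}. The strategy is to verify that the operator $\mathcal{A} = a(\sigma, \rho_0, \partial_\sigma \rho_0) D^4$ together with the boundary operators $\mathcal{B}_1, \mathcal{B}_2$ from \eqref{operator} forms an admissible system in the sense required by that theorem, so that $\mathcal{L} := (\partial_t + \mathcal{A}, \mathcal{B}_1, \mathcal{B}_2, \operatorname{tr}_{t=0})$ is an isomorphism from $\mathbb{E}_{\mu, T}$ onto $\mathbb{E}_{0, \mu} \times \tilde{\mathbb{F}}_{\mu} \times X_{\mu}$, restricted to the affine subspace cut out by the compatibility condition \eqref{comp}. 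The necessity of $F \in \mathbb{E}_{0, \mu}$, $\tilde{G} \in \tilde{\mathbb{F}}_{\mu}$, and \eqref{comp} is immediate from the mapping properties of the trace operators recorded in Lemma \ref{unabneu}, so the substance of the proof lies in sufficiency.

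First I would check the structural hypotheses on the coefficients. The top-order coefficient $a(\sigma, \rho_0, \partial_\sigma \rho_0) = (J(\rho_0))^{-4}$ must be continuous and bounded away from zero on $\bar I$; this follows from Remark \ref{initial}, which gives $\rho_0 \in C^1(\bar I)$ and $J(\rho_0) > C(\alpha, \Phi^*, \eta) > 0$, together with the upper bound on $J$ from Remark \ref{wohldef}.\ref{gross} (applied in the form of Remark \ref{wohldef}.\ref{wohldef0}). Likewise $b_2(\sigma, \rho_0, \partial_\sigma \rho_0) = (J(\rho_0))^{-4} \langle \Psi_q, R\Psi_\sigma \rangle$ is continuous and strictly positive by Remark \ref{wohldef}.\ref{wd1}, and $b_1 \equiv 1$. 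Next I would verify the parameter-ellipticity (the Lopatinskii--Shapiro condition) of the principal symbol. Since the spatial operator is the one-dimensional fourth-order operator $a(\sigma) D^4$ with $D = -i\partial_\sigma$, its principal symbol is $a(\sigma)\xi^4$, and parabolicity amounts to $a(\sigma) > 0$, which we have just established. The boundary symbols associated with $\mathcal{B}_1, \mathcal{B}_2$ are, up to the nonvanishing scalar factors $i b_1$ and $i^3 b_2$, the traces of $D^1$ and $D^3$; the factors $i, i^3$ are precisely the normalization making the normal-derivative symbols match the convention under which the Lopatinskii--Shapiro condition is verified, so one checks that at each boundary point the associated boundary-value ODE on the half-line has only the trivial stable solution. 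This is a constant-coefficient one-dimensional computation that decouples at $\sigma = 0$ and $\sigma = 1$.

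Having confirmed the normal ellipticity and the Lopatinskii--Shapiro condition, the remaining hypotheses of the maximal-regularity theorem are the regularity of the coefficients and the correct weight range $\mu \in (\tfrac78, 1]$, both of which are built into the assumptions of Theorem \ref{local} and the choice $m = 2$ in \eqref{nota}. The orders $m_1 = 1$, $m_2 = 3$ of the boundary operators yield exactly the exponents $\omega_j = 1 - \tfrac{m_j}{2m} - \tfrac{1}{4m}$ defining $\mathbb{F}_{j,\mu}$, so the data space $\tilde{\mathbb{F}}_\mu$ matches the theorem's trace space, and $X_\mu = W_2^{4(\mu - \nicefrac12)}(I)$ is the correct initial-trace space. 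The theorem then furnishes a unique $\rho \in \mathbb{E}_{\mu, T}$ solving \eqref{linprob} whenever the data lie in these spaces and \eqref{comp} holds.

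The main obstacle is the verification that the number and orders of the boundary conditions are compatible with the Lopatinskii--Shapiro condition in the weighted setting, and in particular that no \emph{higher-order} compatibility conditions beyond \eqref{comp} are forced on the data. Because $\mu > \tfrac78$, the solution space $\mathbb{E}_{\mu, T}$ embeds into $C(\bar J; X_\mu)$ with $X_\mu \hookrightarrow C^1(\bar I)$, so only the first boundary operator $\mathcal{B}_1$ (of order one, with order strictly below the regularity threshold of the initial trace) produces a genuine compatibility constraint at $t = 0$, namely \eqref{comp}; the third-order condition $\mathcal{B}_2$ lies above the threshold $2m(\mu - \tfrac12) + \tfrac12 = 4(\mu - \tfrac12) + \tfrac12$ governing which traces of $\rho_0$ are defined, and therefore imposes no compatibility condition. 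Making this counting precise — i.e.\ confronting the boundary orders $m_j$ against the weighted trace exponent to see exactly which compatibility relations survive — is the delicate point, and it is precisely where the low regularity of the initial datum and the temporal weight $\mu$ interact; everything else reduces to quoting the cited theorem once its hypotheses are checked.
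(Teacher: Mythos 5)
Your proposal follows essentially the same route as the paper: both reduce the statement to Theorem 2.1 of Meyries--Schnaubelt and verify the non-degeneracy of the coefficients (via Remarks \ref{initial} and \ref{wohldef}), normal ellipticity, the Lopatinskii--Shapiro condition, and the comparison of $\omega_1,\omega_2$ with $1-\mu+\nicefrac{1}{2}$ showing that only $\mathcal{B}_1$ generates a compatibility condition. Two small points: the Lopatinskii--Shapiro check, which you defer, is the one genuine computation in the paper's proof (explicitly, $\det M=\mu_1\mu_2(\mu_1^2-\mu_2^2)\neq 0$ for the two decaying modes of the half-line ODE), and your trace threshold should be $4(\mu-\nicefrac{1}{2})-\nicefrac{1}{2}$ rather than $4(\mu-\nicefrac{1}{2})+\nicefrac{1}{2}$ --- with the correct value $m_1=1$ lies below it exactly when $\mu>\nicefrac{7}{8}$, which is where that hypothesis is used.
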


\begin{rem} \label{comprem}
	Note that the compatibility condition \eqref{comp} is fulfilled immediately if \eqref{komp} holds true.
\end{rem}

\begin{proof} [Proof of Theorem \ref{linex}]
	We have to check the respective assumptions of Theorem 2.1 in \protect{\cite{meyries}} for the operators given in \eqref{operator}, where the order of $\mathcal{A}$ is $4 = 2m$, thus $m = 2$. Moreover, we work with the integration order $p = 2$ and image space $E := \mathbb{C}$, which is a Banach space of class $\mathcal{HT}$, as it is a Hilbert space. \\
	
	All the involved operators are well-defined and non-trivial: First of all, by Remark \ref{initial}.\ref{initial1} a lower bound on $J(\rho_0)$ for $\sigma \in [0, 1]$ is obtained. Thus, $a(\sigma, \rho_0, \partial_\sigma \rho_0)$ is well-defined and positive for $\sigma \in [0, 1]$. Consequently, the operator $\mathcal{A}$ is well-defined and non-trivial.
	
	Now, set $m_1 = 1$ and $m_2 = 3$. It is obvious that the boundary operator $\mathcal{B}_1$ is non-trivial as well, as its coefficient is $i b_1 = i$.
	We proceed with the non-triviality of $\mathcal{B}_2$: Like previously mentioned, we have $J(\rho_0) > 0$ for $\sigma \in \{0, 1\}$. Moreover, by Remark \ref{wohldef}.\ref{wohldef0} we obtain $\langle \Psi_q, R \Psi_\sigma \rangle (\sigma, \rho_0) > 0$ for $\sigma \in \{0, 1\}$. Thus, the coefficient $i^3 b_{2}(\sigma, \rho_0, \partial_\sigma \rho_0)$ is well-defined and non-zero for $\sigma \in \{0, 1\}$, and $\mathcal{B}_2$ is non-trivial.\\
	
	It remains to check the structural assumptions on the operators $(SD)$ and $(SB)$ and the ellipticity conditions $(E)$ and $(LS_{stat})$:
	For $(SD)$ we show that $a \in BUC(\bar{J} \times \bar{I})$. As the initial datum $\rho_0$ and accordingly the coefficient of $\mathcal{A}$, given by $a (\sigma, \rho_0, \partial_\sigma \rho_0)$, does not depend on $t$, the time regularity is trivial. Remark \ref{initial}.\ref{initial1} and Remark \ref{wohldef}.\ref{wohldef0} give a lower and upper bound on $J(\rho_0)$ for $\sigma \in [0 ,1]$, respectively. Moreover, we observe that for $0< a < b < \infty$, the mapping $[a, b] \ni x \mapsto \left(\nicefrac{1}{x}\right)^4$ is continuous. Additionally, as a sum, product and concatenation of continuous functions, the mapping $[0, 1] \ni \sigma \mapsto J(\rho_0)$ is continuous. Thus, 
	$[0, 1] \ni a(\sigma, \rho_0, \partial_\sigma \rho_0)$ is continuous on a compact interval, hence uniformly continuous and bounded. \\ \\
	Next, we want to verify condition $(SB)$: For $j = 1$ we show that the "or"-case is fulfilled, i.e.\ $ib_{1} \in \mathbb{F}_{1, \mu}$. The regularity is trivial and we observe for $\mu \in (\nicefrac78, 1]$
	\begin{align*}
	\omega_1 = \frac{5}{8} = 1 - \frac{7}{8} + \frac{1}{2} > \ 1 - \mu + \frac{1}{2}.
	\end{align*}
	
	For $j = 2$ we prove that the assumptions of the "either"-case hold true, i.e.\ $i^3 b_{2} \in  C^{\tau_2, 4 \tau_2}(\bar{J} \times \partial I)$ with some $\tau_2 > \omega_2 = \nicefrac{1}{8}$. We fix $\tau_2$ such that $1 > 4 \tau_2 > 4 \omega_2$ is fulfilled. The time-regularity is trivial again. As $\partial I$ consists only of two points, the spatial regularity follows directly as well. \\
	
	For verifying the ellipticity condition $(E)$, we prove that for all $t \in \bar{J}$, $x \in \bar{I}$ and $|\xi| = 1$, it holds for the spectrum $\Sigma(\mathcal{A}(t, \sigma, \xi)) \subset \mathbb{C}_+ := \{\Re{z} > 0\}$. The symbol of $\mathcal{A}$ is given by
	\begin{align*}
	\mathcal{A}(t, \sigma, \xi) = i^4 \frac{1}{(J(\rho_0))^4} \xi^4 = \frac{1}{(J(\rho_0))^4} > 0.
	\end{align*}
	Thus, it fulfills the condition by Remark \ref{initial}.\ref{initial1}. \\
	
	Lastly, we check the Lopatinskii-Shapiro-condition $(LS_{stat})$: for each fixed $t \in \bar{J}$ and $\sigma \in \{0, 1\}$, for each $\lambda \in \overline{\mathbb{C}_+} \backslash \{ 0 \}$ and each $h \in \mathbb{C}^{2}$ the ordinary boundary value problem
	\begin{align}
	\lambda v(y) + a(\sigma) \partial_y^4 v(y) &= 0 && y > 0, \label{ode1}\\
	\tilde{b}_2(\sigma) \partial_y^3 v(y) |_{y = 0} &= h_2, && \label{ode2}\\
	\tilde{b}_1(\sigma) \partial_y v(y)  |_{y = 0} &= h_1, && \label{ode3}\\
	\lim_{y \rightarrow \infty} v(y) &= 0, \label{ode4}
	\end{align}
	with 
	\begin{align*}
	\begin{rcases}
	\tilde{b}_2(\sigma) &:= \frac{\pm 1}{(J(\rho_0(\sigma)))^3} \langle \Psi_q, R \Psi_\sigma \rangle (\sigma, \rho_0(\sigma)) \\
	\tilde{b}_1(\sigma) &:= \pm 1,
	\end{rcases} && \textrm{ for } \sigma = 0, 1,
	\end{align*}
	has a unique solution $v \in C_0([0, \infty); \mathbb{C})$, where
	\begin{align*}
	C_0 \left([0, \infty \right); E) := \left\{ f: [0, \infty) \rightarrow E \textrm{ is continuous with } \lim_{t \rightarrow \infty} f(t) = 0 \right\}.
	\end{align*}
	Note that we already proved before, that $\tilde{b}_2$ and $\tilde{b}_1$ do not vanish due to our assumptions on $\rho_0$. Solving the corresponding characteristic equation to \eqref{ode1}, we obtain that a solution is given by
	\begin{align*}
	v(y) =  c_{0} e^{\mu_0 y} + c_{1} e^{\mu_1 y} + c_{2} e^{\mu_2 y} + c_{3} e^{\mu_3 y},
	\end{align*}
	where $c_0, \dots, c_3 \in \mathbb{C}$ and
	\begin{align*}
	\mu_0 &= \sqrt[4]{r} (\bar{a} + i\bar{b}), \hspace{1 cm} &\mu_1 &= \sqrt[4]{r} (-\bar{b} + i\bar{a}), \\
	\mu_2 &= \sqrt[4]{r} (-\bar{a} + i(-\bar{b})), \hspace{1 cm} &\mu_3 &= \sqrt[4]{r} (\bar{b} + i(-\bar{a})).
	\end{align*}
	Here the numbers $r, \bar{a}$, and $\bar{b}$ are determined by the identity
	\begin{align*}
	\frac{- \lambda}{a(\sigma)} = r [\cos \theta + i \sin \theta] && \textrm{ for } \theta \in \left[\frac{\pi}{2}, \frac{3 \pi}{2} \right],
	\end{align*}
	where $\lambda \in \overline{\mathbb{C}_+} \backslash \{ 0 \}$ and $\sigma = 0, 1$. More precisely, it holds
	\begin{align*}
	r &:= \frac{|\lambda|}{a(\sigma)} > 0, && \bar{a} := \cos(\nicefrac{\theta}{4}) > 0, && \bar{b} := \sin(\nicefrac{\theta}{4}) > 0,
	\end{align*}
	for $\nicefrac{\theta}{4} \in [\nicefrac{\pi}{8}, \nicefrac{3 \pi}{8} ]$.
	Due to condition \eqref{ode4} we see immediately that $c_{0} = c_{3} = 0$.
	Thus, the first and third derivative of the solution is given by
	\begin{align*}
	v'(0) =  c_{1} \mu_1 + c_{2} \mu_2  && \textrm{ and } && v'''(0) = c_{1} \mu_1^3 + c_{2} \mu_2^3.
	\end{align*}
	Combining this with \eqref{ode2} and \eqref{ode3}, we have to show that for each $(h_2, h_1) \in \mathbb{C}^{2}$
	\begin{align*}
	M \begin{pmatrix}
	c_{1} \\
	c_{2}
	\end{pmatrix}
	= 
	\begin{pmatrix}
	\nicefrac{h_{2}}{\tilde{b}_{2}} \\
	\nicefrac{h_{1}}{\tilde{b}_{1}}
	\end{pmatrix}, && \textrm{ with } M:= \begin{pmatrix}
	\mu_1^3 & \mu_2^3 \\
	\mu_1 & \mu_2
	\end{pmatrix},
	\end{align*}
	has a unique solution $(c_{1}, c_{2})$. We observe that $\det M = \mu_1 \mu_2 (\mu_1^2 - \mu_2^2) \neq 0$, as $\mu_1 \neq 0 \neq \mu_2$ and 
	\begin{align*}
	\mu_1^2 - \mu_2^2 = \sqrt[2]{r} \left[\left(2 \bar{b}^2 - 2 \bar{a}^2 \right) + i \left(-4 \bar{a}\bar{b}\right) \right] \neq 0, && \textrm{ as } \bar{a}, \bar{b} > 0.
	\end{align*}
	This proves condition $(LS)$.\\ \\
	Finally, we observe 
	%by \eqref{omega} 
	for $\mu \in \left(\frac78, 1 \right]$
	\begin{align}
	\omega_1 = \frac{5}{8} = 1 - \frac{7}{8} + \frac{1}{2} > 1 - \mu + \frac{1}{2}, &&
	\omega_2 = \frac{1}{8} < 1 - 1 + \frac{1}{2} \leq 1 - \mu + \frac{1}{2}, \label{dataspace}
	\end{align}
	and therefore, $1 - \mu + \nicefrac{1}{p} \neq \omega_1, \omega_2$. 
\end{proof}

\begin{rem} \label{glm}
	By \eqref{dataspace} it follows directly by the addendum of Theorem 2.1 in \protect{\cite{meyries}} that
	the data space is given by
	\begin{align*}
	\mathcal{D} = \left\{ \left(F, \tilde{G}, \rho_0 \right) \in \mathbb{E}_{0, \mu} \times \tilde{\mathbb{F}}_\mu \times X_{\mu} : \mathcal{B}_1 (0, \cdot, D) \rho_0 = G_1 (0, \cdot) \textrm{ on } \partial I \right\}.
	\end{align*}
	The initial datum $\rho_0$ does not have to fulfill further compatibility conditions than \eqref{comp}.
	Moreover, we observe that the compatibility condition is fulfilled for right-hand side zero, i.e.\ $G_1 (0, \cdot) = 0$, hence $\mathcal{D} = \mathcal{D}_0$ in Theorem 2.1 in \protect{\cite{meyries}}. Consequently, we immediately get a uniform estimate for the solution operator for all $T \in (0, T_0]$, $T_0$ given.
\end{rem}

In order to obtain a solution to the quasilinear problem \eqref{prob} a contraction mapping argument is applied. We consider the linear operator $\mathcal{L}:  \mathbb{E}_{\mu, T} \rightarrow \mathbb{E}_{0, \mu} \times \tilde{\mathbb{F}}_{\mu} \times X_{\mu} $, which is given by the left-hand side of \eqref{linprob}. Moreover, we set
\begin{align}
\mathcal{F}({\rho}) := \begin{pmatrix}
F(t, \sigma) \\
G_1(t, \sigma) \\
G_2(t, \sigma)
\end{pmatrix} =
\begin{pmatrix}
- \left(\frac{1}{(J({\rho}))^4} - \frac{1}{(J(\rho_0))^4}\right) \partial_\sigma^4 {\rho} + f({\rho}, \partial_\sigma {\rho}, \partial_\sigma^2 {\rho}, \partial_\sigma^3 {\rho}) \\
0 \\
- \left(b_2({\rho}, \partial_\sigma {\rho}) - b_2(\rho_0, \partial_\sigma \rho_0)\right) \partial_\sigma^3 {\rho} - g_2({\rho}, \partial_\sigma {\rho}, \partial_\sigma^2 {\rho})
\end{pmatrix},
\label{nonlin}
\end{align}
which corresponds to the right-hand side of \eqref{linprob}. The equation \eqref{linprob} is represented by
\begin{align}
\mathcal{L}(\rho) = (\mathcal{F}({{\rho}}), \rho_0) && \textrm{ for } {\rho} \in \mathcal{B}_{K, T} \textrm{ with } 0 < T < \tilde{T}.
\label{equ}
\end{align}
In order to receive a fixed point equation, Theorem \ref{linex} is used for inverting the linear operator $\mathcal{L}$. Thus, it remains to show that $(F, \tilde{G}) \in \mathbb{E}_{0, \mu} \times \tilde{\mathbb{F}}_{\mu}$.
\begin{lem} \label{lemab}
	Let the assumptions of Theorem \ref{local} hold true and let $K$ and $\tilde{T}$ be given by Definition \ref{ass}. Then it holds 
	\begin{align*}
	\mathcal{F}({\rho}) \in \mathbb{E}_{0, \mu} \times \tilde{\mathbb{F}}_{\mu} &&\textrm{ for } {\rho} \in \mathcal{B}_{K, T} \textrm{ with } 0 < T < \tilde{T}.
	\end{align*}
\end{lem}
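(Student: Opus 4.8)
The plan is to verify the three components of $\mathcal{F}(\rho)$ separately. Since $G_1 \equiv 0 \in \mathbb{F}_{1, \mu}$ trivially, the work reduces to showing $F \in \mathbb{E}_{0, \mu} = L_{2, \mu}(J; L_2(I))$ and $G_2 \in \mathbb{F}_{2, \mu}$. As a preliminary step I would record that every scalar prefactor is uniformly bounded: by Lemma \ref{unabneu}.\ref{unab1a} we have $\rho \in C^{\bar\alpha}(\bar J; C^1(\bar I))$, so $\rho$ and $\partial_\sigma \rho$ are bounded on $\bar J \times \bar I$; together with the lower bound $J(\rho) > C > 0$ from Lemma \ref{regu1}, the lower bound on $\langle \Psi_q, R\Psi_\sigma\rangle$ from Remark \ref{wohldef}.\ref{wd1}, and $\Psi \in C^4$ from Remark \ref{smooth}.\ref{linf}, each coefficient of the form \eqref{s}, \eqref{t} and each factor $1/(J(\rho))^4$, $b_2(\rho,\partial_\sigma\rho)$ lies in $L_\infty(J \times I)$ with a bound depending only on $\alpha, \Phi^*, \eta, K$.

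For $F$ the leading contribution $\bigl(\tfrac{1}{(J(\rho))^4} - \tfrac{1}{(J(\rho_0))^4}\bigr)\partial_\sigma^4\rho$ is an $L_\infty$ coefficient times $\partial_\sigma^4\rho \in L_{2,\mu}(J; L_2(I))$ (the latter by definition of $\mathbb{E}_{\mu, T}$), hence it lies in $\mathbb{E}_{0,\mu}$. For the semilinear terms collected in \eqref{darf}, each monomial is a bounded coefficient $S$ times a product $\prod_i \partial_\sigma^{k_i}\rho$ with $k_i \in \{2,3\}$, the worst being $\partial_\sigma^3\rho\,\partial_\sigma^2\rho$ and $(\partial_\sigma^2\rho)^3$ (single-factor terms like $S\partial_\sigma^3\rho$ are immediate from Lemma \ref{unabneu}.\ref{unab234}). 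Using $\rho \in L_\infty(J; W^{4(\mu-\nicefrac12)}_2(I)) \cap L_{2,\mu}(J; W^4_2(I))$ (Lemma \ref{unabneu}.\ref{unab1} and the definition of $\mathbb{E}_{\mu,T}$), I would apply Proposition \ref{embed} to each factor, obtaining $\partial_\sigma^{k_i}\rho \in L_{l_i, \tilde\mu_i}(J; L_{q_i}(I))$ for a free parameter $\theta_i \in (0,1)$ with $l_i = 2/\theta_i$, $\tilde\mu_i = \mu + (1-\theta_i)(1-\mu)$, so that $1 - \tilde\mu_i = \theta_i(1-\mu)$. Choosing the $\theta_i$ with $\sum_i \theta_i = 1$ makes the accumulated temporal weight equal $t^{1-\mu}$ and forces $\sum_i 1/l_i = \tfrac12$, matching $L_{2,\mu}(J)$ under Hölder's inequality in time, while Hölder in space requires $\sum_i 1/q_i \le \tfrac12$. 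Summing the index identity of Proposition \ref{embed} over the factors gives $\sum_i 1/q_i = 4(1-\mu)$ for the two-factor term and $\tfrac{15}{2} - 8\mu$ for the three-factor term, both $\le \tfrac12$ precisely because $\mu > \tfrac78$; a short check shows the per-factor exponents $q_i \in [2,\infty]$ can be realized with these $\theta_i$ as well.

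For $G_2$, since $\partial I$ consists of two points the space $W^{2m\omega_2}_2(\partial I) = W^{\nicefrac12}_2(\partial I)$ carries no spatial regularity, so $\mathbb{F}_{2,\mu}$ reduces to $W^{\nicefrac18}_{2,\mu}(J; L_2(\partial I))$, and it suffices to place the boundary traces of the two summands of $G_2$ in $W^{\nicefrac18}_{2,\mu}(J)$. By the trace estimate Lemma \ref{unabneu}.\ref{unab5678}, $\operatorname{tr}_{\partial I}\partial_\sigma^k\rho \in W^{(7-2k)/8}_{2,\mu}(J)$, i.e.\ of temporal order $\tfrac78, \tfrac58, \tfrac38, \tfrac18$ for $k = 0,1,2,3$. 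The prefactors $T$ from \eqref{t} and $b_2(\rho,\partial_\sigma\rho)$ are smooth functions of $(\sigma,\rho,\partial_\sigma\rho)$; since composition with a Lipschitz function preserves the Slobodetskii seminorm of a $W^s_{2,\mu}(J)$-function for $s \in (0,1)$, since $W^{\nicefrac58}_{2,\mu}(J)$ is a Banach algebra up to a constant (Lemma \ref{ban-alg}.\ref{lem2}), and since reciprocals are controlled by Lemma \ref{ban-alg}.\ref{lem3}, their traces lie in $W^{\nicefrac58}_{2,\mu}(J)$. For $g_2$ I then use Lemma \ref{ban-alg}.\ref{lem2a} to obtain $(\partial_\sigma^2\rho)^2 \in W^{\nicefrac18}_{2,\mu}$ and Lemma \ref{ban-alg}.\ref{lem1} in the non-zero-trace form of Remark \ref{banrem} to multiply by $T \in W^{\nicefrac58}_{2,\mu}$, landing in $W^{\nicefrac18}_{2,\mu}$; the lower-order terms follow the same way via $W^{\nicefrac38}_{2,\mu} \hookrightarrow W^{\nicefrac18}_{2,\mu}$. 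For the leading term of $G_2$, the difference $b_2(\rho,\partial_\sigma\rho) - b_2(\rho_0,\partial_\sigma\rho_0) \in W^{\nicefrac58}_{2,\mu}$ times $\operatorname{tr}_{\partial I}\partial_\sigma^3\rho \in W^{\nicefrac18}_{2,\mu}$ again closes in $W^{\nicefrac18}_{2,\mu}$ by the same product rule.

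The main obstacle I expect is the $F$-estimate: forcing the two- and three-fold products of second- and third-order spatial derivatives into $L_{2,\mu}(J; L_2(I))$ requires simultaneously balancing spatial integrability, temporal integrability, and the temporal weight in Proposition \ref{embed}, and the admissibility of the resulting exponents is exactly the borderline condition $\mu > \tfrac78$. By contrast, the bookkeeping for $G_2$ — checking that the smooth nonlinear coefficients inherit $W^{\nicefrac58}_{2,\mu}$ regularity through composition and that the products close in $W^{\nicefrac18}_{2,\mu}$ — is more routine once the product estimates of Lemma \ref{ban-alg} and Remark \ref{banrem} are available.
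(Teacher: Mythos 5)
Your proposal is correct and follows essentially the same route as the paper: $L_\infty$-bounds on the interior prefactors combined with H\"older in space and time plus the interpolation embedding of Proposition \ref{embed} for the critical products $\partial_\sigma^2\rho\,\partial_\sigma^3\rho$ and $(\partial_\sigma^2\rho)^3$ (with the admissibility of the exponents reducing to $\mu>\nicefrac{7}{8}$), and the trace regularity of Lemma \ref{unabneu} together with the product and reciprocal estimates of Lemma \ref{ban-alg} and Remark \ref{banrem} for $G_2$. The only cosmetic difference is your normalization $\sum_i\theta_i=1$ where the paper deliberately arranges $\sum_i\theta_i<1$ to extract a factor $C(T)\to 0$ for later reuse in the contraction estimate, and your appeal to a general Lipschitz-composition principle where the paper proves the specific Claims \ref{lem3a} and \ref{lem4}; neither affects the validity of the membership statement.
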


\begin{rem}
	Even though it is not important for the proof of this claim, we will keep track of the dependencies of the constants, especially if they depend on $T$. This will enable us to use the derived estimates in the proof of the contraction property of the operator $\mathcal{F}$.
\end{rem}

\begin{proof}[Proof of Lemma \ref{lemab}]
	Firstly, we want to take care of the first component of $\mathcal{F}({\rho})$, cf.\ \eqref{nonlin}: More precisely, we want to show that $F \in \mathbb{E}_{0, \mu}$ for $\rho \in \mathcal{B}_{K, T}$ with $0 < T < \tilde{T}$. To this end, the following claim is proven.
	\begin{claim} \label{lip}
		Let $K$ and $\tilde{T}$ be given by Definition \ref{ass}. For ${\rho} \in \mathcal{B}_{K, T}$ with $0 < T < \tilde{T}$, there exists a constant $C(\alpha, \Phi^*, \eta, K) > 0$, such that
		\begin{align*}
		\left\|\left( \frac{1}{(J(\rho))^4} - \frac{1}{(J(\rho_0))^4} \right) \right\|_{C(\bar{J}, C(\bar{I}))} \leq C(\alpha, \Phi^*, \eta, K) \|{J(\rho)} - J(\rho_0) \|_{C(\bar{J}, C(\bar{I}))},
		\end{align*}
	\end{claim}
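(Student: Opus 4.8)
The plan is to reduce the claim to an elementary pointwise estimate for the scalar map $x \mapsto x^{-4}$ on a compact subinterval of $(0, \infty)$, combined with the two-sided bounds on the Jacobian $J$ that are already at our disposal.

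First I would collect the bounds on $J$. Since $\rho_0$ satisfies the hypotheses of Theorem \ref{local} and $\rho \in \mathcal{B}_{K,T}$ with $0 < T < \tilde{T}$, Remark \ref{initial}.\ref{initial1} together with the addendum \eqref{klein} of Lemma \ref{regu1} yields a uniform lower bound $J(\rho), J(\rho_0) \geq c_0 > 0$ with $c_0 = c_0(\alpha, \Phi^*, \eta)$, valid for all $\sigma \in \bar{I}$ and \emph{all} $t \in \bar{J}$. On the other hand, Remark \ref{wohldef}.\ref{gross} (and its extension \ref{wohldef}.\ref{wohldef0} to the initial datum $\rho_0$) gives a uniform upper bound $J(\rho), J(\rho_0) \leq c_1$ with $c_1 = c_1(\alpha, \Phi^*, \eta, K)$. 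Moreover, $J(\rho)$ and $J(\rho_0)$ are continuous on $\bar{J} \times \bar{I}$ by the embedding $\mathbb{E}_{\mu, T} \hookrightarrow C^{\bar{\alpha}}(\bar{J}; C^1(\bar{I}))$ of Lemma \ref{unabneu}.\ref{unab1a}, so the $C(\bar{J}, C(\bar{I}))$-norms below are well-defined and the bounds hold everywhere pointwise.

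Next, at each fixed $(t, \sigma)$ I would employ the algebraic factorization
\begin{align*}
\frac{1}{(J(\rho))^4} - \frac{1}{(J(\rho_0))^4}
= \frac{(J(\rho_0) - J(\rho)) \, (J(\rho_0) + J(\rho)) \, \big((J(\rho_0))^2 + (J(\rho))^2\big)}{(J(\rho))^4 \, (J(\rho_0))^4}.
\end{align*}
Using $c_0 \leq J(\rho), J(\rho_0) \leq c_1$, the numerator apart from the factor $J(\rho_0) - J(\rho)$ is bounded by $2c_1 \cdot 2 c_1^2 = 4 c_1^3$, while the denominator is bounded below by $c_0^8$, whence pointwise
\begin{align*}
\left| \frac{1}{(J(\rho))^4} - \frac{1}{(J(\rho_0))^4} \right|
\leq \frac{4 c_1^3}{c_0^8} \, \big| J(\rho) - J(\rho_0) \big|.
\end{align*}
Equivalently, one may invoke the mean value theorem for $x \mapsto x^{-4}$, whose derivative is bounded in modulus by $4 c_0^{-5}$ on $[c_0, c_1]$.

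Finally I would take the supremum over $\sigma \in \bar{I}$ and $t \in \bar{J}$, which yields the assertion with $C(\alpha, \Phi^*, \eta, K) := 4 c_1^3 / c_0^8$ (or $4 c_0^{-5}$ via the mean value theorem), a constant depending only on the stated data and, in particular, independent of $T$. There is no substantial obstacle here; the only point demanding care is to verify that the lower and upper bounds on $J$ hold \emph{uniformly in time} on all of $\bar{J}$ rather than merely at $t = 0$, which is precisely what the addendum \eqref{klein} of Lemma \ref{regu1} supplies.
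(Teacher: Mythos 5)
Your proof is correct and follows essentially the same route as the paper: both establish uniform-in-time lower and upper bounds on $J(\rho)$ and $J(\rho_0)$ via \eqref{klein}, Remark \ref{initial}.\ref{initial1}, and Remark \ref{wohldef}.\ref{gross}/\ref{wohldef0}, and then exploit the Lipschitz continuity of $x \mapsto x^{-4}$ on the resulting compact interval bounded away from zero. Your explicit factorization (or mean value theorem) merely makes the Lipschitz constant concrete, which the paper leaves implicit.
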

	
	\begin{proof}[Proof of the claim:]
		By \eqref{klein}, there exists a  $C(\alpha, \Phi^*, \eta, K)$, such that
		\begin{align}
		\left\| \frac{1}{J(\rho)} \right\|_{C(\bar{J}, C(\bar{I}))} \leq C(\alpha, \Phi^*, \eta, K) && \textrm{ for } {\rho} \in \mathcal{B}_{K, T} \textrm{ with } 0 < T < \tilde{T}.
		\label{1j}
		\end{align}
		Using Remark \ref{wohldef}.\ref{gross}, we have a $\bar{C}(\alpha, \Phi^*, \eta, K) > 0$ such that
		\begin{align}
		\|{J(\rho)} \|_{C(\bar{J}, C(\bar{I}))} \leq \bar{C}(\alpha, \Phi^*, \eta, K) && \textrm{ for } {\rho} \in \mathcal{B}_{K, T}  \textrm{ with } 0 < T < \tilde{T}.
		\label{j}
		\end{align}
		Now, the Lipschitz continuity of $x \mapsto \nicefrac{1}{x^4}$ on the interval $[C^{-1}, \bar{C}]$ is exploited: We denote the corresponding Lipschitz constant by $C(\alpha, \Phi^*, \eta, K)$ and the claim follows directly.
	\end{proof}
	
	For the first summand of $F$, we have by Claim \ref{lip} and Remark \ref{wohldef}.\ref{gross} and \ref{wohldef}.\ref{wohldef0},
	\begin{align*}
	\left\|- \left(\frac{1}{(J(\rho))^4} - \frac{1}{(J(\rho_0))^4} \right) \partial_\sigma^4 \rho \right\|_{L_{2, \mu} (J; L_{2} (I))} &\leq L \|{J(\rho)} - J(\rho_0) \|_{C(\bar{J}, C(\bar{I}))} \|\rho\|_{L_{2, \mu} (J; W^4_{2} (I))} \\
	&\leq C(\alpha, \Phi^*, \eta, K), 
	\end{align*}
	for all $\rho \in \mathcal{B}_{K, T}$ with  $0 < T < \tilde{T}$. \\
	
	We proceed with the estimate of $\|f\|_{\mathbb{E}_{0, \mu}}$ by $\|\rho\|_{ \mathbb{E}_{\mu, T}}$, cf.\ \eqref{darf} for the representation of $f$.
	First, we take care of the prefactors $S(\sigma, \rho, \partial_{\sigma} \rho)$: By considering the structure their structure, cf.\ \eqref{s} and \eqref{tildes}., we want to prove that they are bounded in $L_\infty(\bar{J} \times \bar{I})$ by a constant depending on $\alpha, \Phi^*, \eta$, and $K$. 
	By Remark \ref{wohldef}.\ref{wd1}, we obtain a bound on $\nicefrac{1}{\langle \Psi_q , R \Psi_\sigma \rangle(\sigma, \rho)}$. For the factor $(J(\rho))^n$, $n \in \mathbb{Z}$ we can directly use the bounds established in \eqref{1j} and \eqref{j}, respectively. Moreover, we control $\rho$ and $\partial_{\sigma} \rho$ by Lemma \ref{regu1}. Finally, we take care of the scalar products: Note that there are at most four derivatives on $\Psi$. Combining the $C^4$-bound on $[\sigma \mapsto \Psi(\sigma, q)]$ established in Remark \ref{smooth}.\ref{linf} with the previously discussed bound on $\rho$, we obtain 
	\begin{align*}
	\left\| \left[(t, \sigma) \mapsto \Psi^{\gamma}_{(\sigma, q)} (\sigma, \rho(t, \sigma)) \right] \right\|_{L_{\infty} (\bar{J} \times \bar{I})} \leq C(\alpha, \Phi^*, \eta, K),
	\end{align*}
	for $|\gamma| \leq 4$. In summary, a suitable bound is given by
	\begin{align*}
	\|S(\sigma, \rho, \partial_{\sigma} \rho)\|_{L_{\infty} (\bar{J} \times \bar{I})} \leq C(\alpha, \Phi^*, \eta, K).
	\end{align*}
	
	Next, we find $L_{2, \mu} (J; L_{2} (I))$-bounds for the summands of $f({\rho}, \partial_\sigma {\rho}, \partial_\sigma^2 {\rho}, \partial_\sigma^3 {\rho})$, where the prefactors can be neglected due to the previously established bound. In the following, we will use $\rho_1, \rho_2 \in \mathcal{B}_{K, T}$ to keep the calculations general. We have 
	\begin{align*}
	\left\|\partial^2_\sigma \rho_1 \partial_\sigma^3 \rho_2 \right\|_{L_{2, \mu} (J; L_{2} (I))} &\leq \left\| \left\|\partial^2_\sigma \rho_1 (t)\right\|_{L_{q_1} (I)} \left\|\partial_\sigma^3 \rho_2 (t)\right\|_{L_{q_2} (I)} \right\|_{L_{2, \mu} (J)} \nonumber \\
	&\leq \left\| \|\rho_1(t) \|_{W^2_{q_1} (I)} \|\rho_2(t)\|_{W^3_{q_2} (I)} \right\|_{L_{2 , \mu} (J)},
	\end{align*}
	where the estimates follow for $q_1, q_2 \in [2, \infty]$ fulfilling $\nicefrac{1}{q_1} + \nicefrac{1}{q_2} = \nicefrac{1}{2}$ by Hölder's inequality. 
	By expanding the integrand, we obtain for $\tilde{\mu}_i \in [\mu, 1]$, $i = 1, 2$, 
	\begin{align}
	\left\|\partial^2_\sigma \rho_1 \partial_\sigma^3 \rho_2 \right\|_{L_{2, \mu} (J; L_{2} (I))} &\leq \left\| t^{1-\tilde{\mu}_1}\|\rho_1(t)\|_{W^2_{q_1} (I)} t^{1-\tilde{\mu}_2} \|\rho_2(t)\|_{W^3_{q_2} (I)} t^{1 - \mu - (1-\tilde{\mu}_1 + 1-\tilde{\mu}_2)} \right\|_{L_{2} (J)} \nonumber\\
	&\leq C_1 \left\| t^{1-\tilde{\mu}_1}\|\rho_1(t)\|_{W^2_{q_1} (I)} t^{1-\tilde{\mu}_2} \|\rho_2(t)\|_{W^3_{q_2} (I)} \right\|_{L_{2} (J)} \nonumber \\
	&\leq C_1 C_2 \|\rho_1 \|_{L_{l_1 , \tilde{\mu}_1} (J; W^3_{q_1} (I))} \|\rho_2 \|_{L_{l_2, \tilde{\mu}_2} (J; W^3_{q_2} (I))}, \label{contra}
	\end{align}
	for $\nicefrac{1}{l_1} + \nicefrac{1}{l_2} \leq \nicefrac{1}{2}$. Here 
	\begin{align*}
	C_1 &= \begin{cases}
	C(T) \rightarrow 0 \textrm{ as } T \rightarrow 0 \hspace{3cm} & \textrm{ if } 1 - \mu - (1-\tilde{\mu}_1 + 1-\tilde{\mu}_2) > 0, \\
	1 & \textrm{ if } 1 - \mu - (1-\tilde{\mu}_1 + 1-\tilde{\mu}_2) = 0,
	\end{cases} \\
	C_2 &= \begin{cases}
	C(T) \rightarrow 0 \textrm{ as } T \rightarrow 0 \hspace{3cm} & \textrm{ if } \frac{1}{l_1} + \frac{1}{l_2} < \frac{1}{2}, \\
	1 & \textrm{ if } \frac{1}{l_1} + \frac{1}{l_2} = \frac{1}{2}.
	\end{cases}
	\end{align*}
	In order to control $\|\partial^2_\sigma \rho_1 \partial_\sigma^3 \rho_2 \|_{L_{2, \mu} (J; L_{2} (I))}$ in terms of $\|\rho_i\|_{\mathbb{E}_{\mu, T}}$, Proposition \ref{embed} is used to estimate the right-hand side of \eqref{contra}, where $\nicefrac{1}{q_1} + \nicefrac{1}{q_2} = \nicefrac{1}{2}$, $k_1 = 2, k_2 = 3$, and $l_i$ and the time-weights $\tilde{\mu}_i$ have to be chosen carefully: By the assumptions of the Proposition \ref{embed}, we obtain by $\nicefrac{\theta_i}{2} = \nicefrac{1}{l_i}$, $i =1, 2$,
	\begin{align*}
	\theta_1 + \theta_2 < 1  && \Leftrightarrow && \frac{1}{l_1} + \frac{1}{l_2} < \frac{1}{2} && \textrm{ and } && \theta_1 + \theta_2 = 1  && \Leftrightarrow && \frac{1}{l_1} + \frac{1}{l_2} = \frac{1}{2}.
	\end{align*}
	Moreover, we have by $\tilde{\mu}_i = \mu + (1 - \theta_i)(1 - \mu) \in [\mu, 1]$ the equality
	\begin{align}
	1 - \mu - (1-\tilde{\mu}_1 + 1-\tilde{\mu}_2)
	%1 - \mu &- (1-\tilde{\mu}_1 + 1-\tilde{\mu}_2) = \tilde{\mu}_1 + \tilde{\mu}_2 - 1 - \mu \nonumber \\
	%&= \mu + (1 - \theta_1)(1 - \mu) + \mu + (1 - \theta_2)(1 - \mu) - 1 - \mu \nonumber \\
	%&= -(1 - \mu) + (2 - (\theta_1 + \theta_2))(1 - \mu) 
	= (1 - (\theta_1 + \theta_2))(1 - \mu),
	\label{hilfe1}
	\end{align}
	thus, for $\mu \in (\nicefrac{7}{8}, 1)$
	\begin{align*}
	\theta_1 + \theta_2 &< 1 \; \Leftrightarrow \; 1 - \mu - (1-\tilde{\mu}_1 + 1-\tilde{\mu}_2) > 0, \\
	\theta_1 + \theta_2 &= 1 \; \Leftrightarrow \; 1 - \mu - (1-\tilde{\mu}_1 + 1-\tilde{\mu}_2) = 0,
	\end{align*}
	and $1 - \mu - (1-\tilde{\mu}_1 + 1-\tilde{\mu}_2) = 0$ for $\mu = 1$.
	Consequently, to produce a constant $C(T) \rightarrow 0$ for $T \rightarrow 0$, it remains to prove that for $\nicefrac{1}{q_1} + \nicefrac{1}{q_2} = \nicefrac{1}{2}$, $k_1 = 2, k_2 = 3$ and $\mu \in (\nicefrac{7}{8}, 1]$, it is possible to fulfill $\theta_1 + \theta_2 < 1$.
	Using $s_i := k_i + \nicefrac{1}{2} - \nicefrac{1}{q_i} = 4\left(\mu - \nicefrac{1}{2}\right) (1 - \theta_i) + 4 \theta_i$, we directly calculate
	\begin{align*}
	\theta_1 + \theta_2 
	= 2 - \frac{5}{8 \left(1 - \mu + \frac{1}{2}\right)} < 2 - \frac{5}{8 \left(1 - \frac{7}{8} + \frac{1}{2}\right)} = 1. 
	\end{align*}
	Hence, applying Proposition \ref{embed} to \eqref{contra}, it follows
	\begin{align}
	\left\|\partial^2_\sigma \rho_1 \partial_\sigma^3 \rho_2 \right\|_{L_{2, \mu} (J; L_{2} (I))}
	&\leq C(T) \prod_{i=1}^{2} \left(\|\rho_i\|_{L_{\infty}(J, W_{2}^{4(\mu - \nicefrac{1}{2})}(I))} + \| \rho_i \|_{L_{2, \mu}(J, W^4_2 (I))} \right),
	\label{contr}
	\end{align}
	where $C(T) \rightarrow 0$ as  $T \rightarrow 0$, as the constant of the embedding in Proposition \ref{embed} does not depend on $T$. Setting $\rho_i = \rho$, $i = 1, 2$, in \eqref{contr} and using the embedding of the solution space into the temporal trace space, see Lemma \ref{unabneu}.\ref{unab1}, we deduce
	\begin{align*}
	\left\|S(\sigma, \rho, \partial_\sigma \rho) \partial^2_\sigma \rho \partial_\sigma^3 \rho \right\|_{L_{2, \mu} (J; L_{2} (I))} \leq C(\alpha, \Phi^*, \eta, K, T) && \textrm{ for } \rho \in \mathcal{B}_{K, T} \textrm{ with } 0 < T < \tilde{T}.
	\end{align*}
	Note that the constant on the right-hand side does in general no longer fulfill $C(T) \rightarrow 0$ for $T \rightarrow 0$.
	
	By inspection of the remaining summands, we obtain that all terms besides $S(\partial^2_\sigma \rho)^3$ can be treated as in \eqref{contr}, where an additional $T^s$ with $s>0$ is possibly produced. Therefore, it just remains to show that $(\partial^2_\sigma \rho)^3 \in L_{2, \mu} (J; L_{2} (I))$ for $\rho \in \mathcal{B}_{K, T}$ with $0 < T < \tilde{T}$. To keep the calculations general for employing them later, we use again $\rho_i$, $i = 1, 2, 3$. By Hölder's inequality and direct estimates, we deduce
	\begin{align*}
	\left\|\prod_{i=1}^{3}\partial^2_\sigma \rho_i \right\|_{L_{2, \mu} (J; L_{2} (I))} 
	&\leq \left\| t^{1 - \mu} \prod_{i=1}^{3} \left\|\partial^2_\sigma \rho_i (t) \right\|_{ L_{6} (I)} \right\|_{L_{2} (J)} \leq \left\| t^{1 - \mu} \prod_{i=1}^{3} \| \rho_i (t) \|_{ W^2_{6} (I)} \right\|_{L_{2} (J)}.
	\end{align*}
	Again, we want to find bounds for the right-hand side by Proposition \ref{embed}, where $k = k_i = 2$ and $q = q_i = 6$, $i = 1, 2, 3$. A direct calculation shows that $\theta = \theta_i $ is given by
	\begin{align}
	\theta = \frac{k + \frac{1}{2} - \frac{1}{q} - 4 \left(\mu - \frac{1}{2}\right)}{4 \left(\frac{3}{2} - \mu \right)} = \frac{4 + \frac{1}{3} - 4 \mu}{4 \left(\frac{3}{2} - \mu \right)} \in \left[\frac{1}{6}, \frac{1}{3}\right) && \textrm{ for } \mu \in \left(\frac{7}{8}, 1 \right].
	\label{theta}
	\end{align}
	Moreover, by $\nicefrac{\theta}{2} = \nicefrac{1}{l}$ and analogously to \eqref{hilfe1}, we have for $\mu \in (\nicefrac{7}{8}, 1)$
	\begin{align*}
	\frac{3}{l} < \frac{1}{2} && \Leftrightarrow && 3 \theta < 1 && \Leftrightarrow && 1 - \mu - 3(1 - \tilde{\mu}) > 0,
	\end{align*}
	and $ 1 - \mu - 3(1 - \tilde{\mu}) = 0$ for $\mu = 1$, where $\tilde{\mu} = \tilde{\mu}_i = \mu + (1 - \theta_i)(1 - \mu) \in [\mu, 1]$.
	Using \eqref{theta}, we deduce $3 \theta < 1$ for $\mu \in (\nicefrac{7}{8}, 1]$ and consequently we obtain for $l > 6$
	\begin{align*}
	\left\|\prod_{i=1}^{3}\partial^2_\sigma \rho_i \right\|_{L_{2, \mu} (J; L_{2} (I))} 
	&\leq \left\| t^{1 - \mu-3(1 - \tilde{\mu})} \left(t^{1 - \tilde{\mu}} \right)^3 \prod_{i=1}^{3} \| \rho_i (t) \|_{ W^2_{6} (I)} \right\|_{L_{2} (J)}  \nonumber \\
	&\leq C(T) \prod_{i=1}^{3} \left\|t^{1 - \tilde{\mu}}\| \rho_i (t) \|_{ W^2_{6} (I)} \right\|_{L_{l} (J)} = C(T) \prod_{i=1}^{3} \|\rho_i \|_{L_{l, \tilde{\mu}} (J;  W^2_{6} (I))}, 
	\end{align*}
	where $C(T) \rightarrow 0$ as  $T \rightarrow 0$.
	By Proposition \ref{embed}, it follows
	\begin{align}
	\left\|\prod_{i=1}^{3}\partial^2_\sigma \rho_i \right\|_{L_{2, \mu} (J; L_{2} (I))} \leq C(T) \prod_{i=1}^{3} \left(\|\rho_i\|_{L_{\infty}(J, W_{2}^{4(\mu - \nicefrac{1}{2})}(I))} \| + \| \rho_i \|_{L_{2, \mu}(J, W^4_2 (I))} \right),
	\label{contr1}
	\end{align}
	where the constant still fulfills $C(T) \rightarrow 0$ for $T \rightarrow 0$, since the operator norm of the embedding in Proposition \ref{embed} does not depend on $T$. Setting $\rho_i = \rho$, $i = 1, 2, 3$, and using again the embedding in Lemma \ref{unabneu}.\ref{unab1}, we obtain
	\begin{align*}
	\left\|S(\sigma, \rho, \partial_\sigma \rho) (\partial^2_\sigma \rho)^3 \right\|_{L_{2, \mu} (J; L_{2} (I))} \leq C(\alpha, \Phi^*, \eta, K, T) && \textrm{ for } \rho \in \mathcal{B}_{K, T} \textrm{ with } 0 < T < \tilde{T}.
	\end{align*}
	\\
	Furthermore, we have to verify that $(G_1, G_2) \in \tilde{\mathbb{F}}_{\mu}$ for  $\rho \in \mathcal{B}_{K, T}$:
	We observe that it suffices to verify $G_1 \in W^{\nicefrac{5}{8}}_{2, \mu} (J)$ and $G_2 \in W^{\nicefrac{1}{8}}_{2, \mu} (J)$ for $\sigma \in \{0, 1\}$, as the boundary $\partial I$ only consists of two points. Note that the claim for $G_1 \equiv 0$ is trivially fulfilled. 
	
	First, we consider the regularity of $\rho$ and its derivatives at the boundary $\partial I$: Combining Lemma \ref{unabneu}.\ref{unab234} and \ref{unabneu}.\ref{unab5678}, we obtain $\partial^k_\sigma \rho \in W^{\nicefrac{7 - 2k}{8}}_{2, \mu} (J; L_{2} (\partial I) ) \cap L_{2, \mu} (J; W^{\nicefrac{7 - 2k}{2}}_{2} (\partial I) )$ for $k = 0, \dots, 3$ as $\rho \in  \mathbb{E}_{\mu, T}$,
thus,  
	\begin{align}
	\partial^k_\sigma \rho(\cdot, \sigma) &\in W^{\nicefrac{(7-2k)}{8}}_{2, \mu} (J)  
	&&\textrm{ for } k = 0, \dots, 3 \textrm{ and } \sigma \in \{0, 1\}.
	\label{rhoreg}
	\end{align} 
	This enables us to prove $G_2 \in W^{\nicefrac{1}{8}}_{2, \mu} (J)$, see \eqref{nonlin}, for $\sigma \in \{0, 1\}$.
	see \eqref{nonlin}. 
	The summand $b_2(\sigma, \rho_0, \partial_\sigma \rho_0) \partial_\sigma^3 {\rho}$ is clearly an element of $W^{\nicefrac{1}{8}}_{2, \mu} (J)$, since the coefficient does not depend on $t$. In order to prove the regularity for the other summands, Lemma \ref{ban-alg}.\ref{lem1} and Remark \ref{banrem} are used: The previously mentioned results state that the product of an element of $W^{\nicefrac{5}{8}}_{2, \mu} (J)$ with one of $W^{\nicefrac{1}{8}}_{2, \mu} (J)$ is again in $W^{\nicefrac{1}{8}}_{2, \mu} (J)$. Therefore, it is helpful to prove
	\begin{align}
	\begin{rcases}
	b_2(\sigma, \rho, \partial_\sigma \rho) &\in W^{\nicefrac{5}{8}}_{2, \mu} (J), \\
	T(\sigma, \rho, \partial_\sigma \rho) &\in W^{\nicefrac{5}{8}}_{2, \mu} (J)
	\end{rcases}
	&& \textrm{ for } \sigma \in \{0, 1\}, 
	\label{beh}
	\end{align}
	see \eqref{coeffs} for the definition of $b_2$ and \eqref{darg} for the representation of $g_2$ with the coefficients $T$, cf.\ \eqref{t}. In order to achieve this, we show the following claims. 
	
	\begin{claim} \label{lem3a}
		For $\rho \in \mathcal{B}_{K, T}$ with $0 < T < \tilde{T}$, it holds $J(\rho) \in W^{\nicefrac{5}{8}}_{2, \mu} (J)$ and $\tfrac{1}{J(\rho)} \in W^{\nicefrac{5}{8}}_{2, \mu} (J)$ for $\sigma \in \{0, 1\}$ with the estimates
		\begin{align*}
		\begin{rcases}
		\left\| J(\rho) \right\|_{W^{\nicefrac{5}{8}}_{2, \mu} (J)} \leq C(\alpha, \Phi^*, \eta, K, \tilde{T}), \\
		\left\|\tfrac{1}{J(\rho)} \right\|_{W^{\nicefrac{5}{8}}_{2, \mu} (J)} \leq C(\alpha, \Phi^*, \eta, K, \tilde{T}) 
		\end{rcases}
		&& \textrm{ for } \rho \in \mathcal{B}_{K, T} \textrm{ with } 0 < T < \tilde{T}  \textrm{ and } \sigma \in \{0, 1\}.
		\end{align*}
	\end{claim}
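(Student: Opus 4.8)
The idea is that at a fixed boundary point $\sigma \in \{0,1\}$ the quantity $J(\rho)$ is a fixed, smooth function of the two scalar time-traces $\rho(\cdot,\sigma)$ and $\partial_\sigma\rho(\cdot,\sigma)$, so the statement will follow from a substitution (Nemytskii) estimate in the weighted Slobodetskii space $W^{\nicefrac{5}{8}}_{2,\mu}(J)$ combined with the trace regularity \eqref{rhoreg}.

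The plan is to first record the regularity of the inner functions. By \eqref{rhoreg} we have $\rho(\cdot,\sigma) \in W^{\nicefrac{7}{8}}_{2,\mu}(J) \hookrightarrow W^{\nicefrac{5}{8}}_{2,\mu}(J)$ and $\partial_\sigma\rho(\cdot,\sigma) \in W^{\nicefrac{5}{8}}_{2,\mu}(J)$ for $\sigma \in \{0,1\}$, with norms controlled by $\|\rho\|_{\mathbb{E}_{\mu,T}} + \|\rho_0\|_{X_\mu} \leq K + R_1$ via Lemma \ref{unabneu}. Writing $G(\sigma, r, p) := \sqrt{|\Psi_\sigma(\sigma,r)|^2 + 2\langle \Psi_\sigma, \Psi_q\rangle(\sigma,r)\, p + |\Psi_q(\sigma,r)|^2 p^2}$, we have $J(\rho)(t,\sigma) = G(\sigma, \rho(t,\sigma), \partial_\sigma\rho(t,\sigma))$. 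By Lemma \ref{regu1} the pair $(\rho, \partial_\sigma\rho)$ stays in a fixed bounded set on which, thanks to \eqref{klein}, the radicand is bounded below by $C(\alpha,\Phi^*,\eta)^2 > 0$ and, by Remark \ref{wohldef}.\ref{gross}, bounded above; since $\Psi \in C^4$ (Remark \ref{smooth}.\ref{linf}), the map $G$ is therefore $C^1$, hence Lipschitz on this set, with Lipschitz constant and sup-norm controlled by $\alpha, \Phi^*, \eta, K$.

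Next I would carry out the substitution estimate. Using the seminorm characterization of $W^{\nicefrac{5}{8}}_{2,\mu}(J)$ from Remark \ref{nummer1}, the Lipschitz bound $|G(\cdots(t)) - G(\cdots(\tau))| \leq L\bigl(|\rho(t,\sigma)-\rho(\tau,\sigma)| + |\partial_\sigma\rho(t,\sigma)-\partial_\sigma\rho(\tau,\sigma)|\bigr)$ passes directly into the weighted difference-quotient integral, since the temporal weight $\tau^{(1-\mu)p}$ sits outside the difference; this yields $[J(\rho)]_{W^{\nicefrac{5}{8}}_{2,\mu}(J)} \leq C\,L\bigl([\rho(\cdot,\sigma)]_{W^{\nicefrac{5}{8}}_{2,\mu}(J)} + [\partial_\sigma\rho(\cdot,\sigma)]_{W^{\nicefrac{5}{8}}_{2,\mu}(J)}\bigr)$, while boundedness of $G$ controls the $L_{2,\mu}(J)$-part. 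Hence $J(\rho) \in W^{\nicefrac{5}{8}}_{2,\mu}(J)$ with the asserted bound depending on $\alpha, \Phi^*, \eta, K, \tilde{T}$.

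Finally, for $\tfrac{1}{J(\rho)}$, I would invoke Lemma \ref{ban-alg}.\ref{lem3}: since $\tfrac{5}{8} < 1$, $\tfrac{5}{8} > \tfrac{1}{2} - \mu$ (as $\mu > \tfrac{7}{8}$), and $J(\rho) \geq C(\alpha,\Phi^*,\eta) > 0$ by \eqref{klein}, that lemma gives $\tfrac{1}{J(\rho)} \in W^{\nicefrac{5}{8}}_{2,\mu}(J)$ with norm bounded in terms of $\|J(\rho)\|_{W^{\nicefrac{5}{8}}_{2,\mu}(J)}$, $C$, and $T_0$ — alternatively, one may simply repeat the substitution argument with the smooth function $1/G$, which remains Lipschitz on the same bounded set by the lower bound \eqref{klein}. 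The main obstacle is the substitution step itself: one must check that composing a $C^1$-Lipschitz function with the weighted fractional traces preserves $W^{\nicefrac{5}{8}}_{2,\mu}(J)$ and, crucially, that every constant depends only on $\alpha, \Phi^*, \eta, K, \tilde{T}$, which hinges on the \emph{uniform} lower bound \eqref{klein}, the uniform upper bound from Remark \ref{wohldef}.\ref{gross}, and the $C^4$-control of $\Psi$.
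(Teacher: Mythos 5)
Your argument is correct, and it reaches the same two conclusions by the same final step for $\tfrac{1}{J(\rho)}$ (the lower bound from \eqref{klein} combined with Lemma \ref{ban-alg}.\ref{lem3}), but the route to $J(\rho)\in W^{\nicefrac{5}{8}}_{2,\mu}(J)$ is genuinely different in mechanism. The paper does not use a general substitution estimate: it writes $J(\rho)=|\Psi_\sigma+\Psi_q\partial_\sigma\rho|$ and exploits the fact that, for the curvilinear coordinates \eqref{curvilin}, $\Psi_\sigma(\sigma,\rho(t,\sigma))$ is \emph{affine} in $\rho(t,\sigma)$ with $t$-independent vector coefficients and $\Psi_q$ is $t$-independent, so the vector $\Phi_\sigma(\cdot,\sigma)$ lies in $W^{\nicefrac{5}{8}}_{2,\mu}(J;\mathbb{R}^2)$ by linearity and \eqref{rhoreg}; the scalar $J(\rho)$ then inherits this because the Euclidean norm is $1$-Lipschitz, so the reverse triangle inequality passes through the intrinsic weighted seminorm. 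This needs no lower bound on the radicand for the first assertion. You instead run a generic Nemytskii argument for $G=\sqrt{\,\cdot\,}$ composed with the traces, which forces you to invoke the uniform positivity \eqref{klein} already at this stage to make the square root Lipschitz on the compact range supplied by Lemma \ref{regu1}; the rest (the weight $\tau^{(1-\mu)p}$ sitting outside the difference quotient, boundedness of $G$ controlling the $L_{2,\mu}$-part, constants depending only on $\alpha,\Phi^*,\eta,K,\tilde{T}$) is sound and is exactly the characterization from Remark \ref{nummer1} that the paper also relies on. What your version buys is robustness: it would apply verbatim to any $C^1$ coefficient function of $(\sigma,\rho,\partial_\sigma\rho)$, and it is consistent with the Lipschitz-composition estimates the paper itself uses later (Claim \ref{lip} and the proof of Lemma \ref{contraction}). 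What the paper's version buys is economy: no positivity of $J$ is needed for the membership of $J(\rho)$ itself, and the constants come directly from $\|\Psi\|_{C^4}$ and the trace norms in \eqref{rhoreg}.
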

	
	\begin{proof}[Proof of the claim:]
		First, we show $J(\rho) \in W^{\nicefrac{5}{8}}_{2, \mu} (J)$ for $\sigma \in \{0, 1\}$. 
		Recall that $J(\rho)$ is given by
		\begin{align*}
		J(\rho) = |\Phi_\sigma| = |\Psi_\sigma + \Psi_q \partial_{\sigma} \rho| = \sqrt{|\Psi_\sigma|^2 + 2\langle \Psi_\sigma, \Psi_q \rangle \partial_{\sigma} \rho + |\Psi_q|^2(\partial_{\sigma} \rho)^2}.  
		\end{align*}
		We notice that $\Psi_\sigma(\sigma, \rho(\cdot, \sigma)) \in W^{\nicefrac{5}{8}}_{2, \mu} (J)$ for $\sigma \in \{0, 1\}$, since its summands are either independent of $t$ or are given by $\rho(t, \sigma) v$ for
		$v \in \mathbb{R}^2$, which does not depend on $t$. 
		Due to the regularity of $\rho$ in \eqref{rhoreg}, the claim follows directly by the definition of the norm. Besides, $\Psi_q(\sigma) \partial_\sigma \rho(\cdot, \sigma) \in W^{\nicefrac{5}{8}}_{2, \mu} (J)$ for $\sigma \in \{0, 1\}$ as well by \eqref{rhoreg}, since $\Psi_q$ does not depend on $t$. Moreover, we have 
		\begin{align*}
		|\langle \Psi_\sigma, \Psi_q \rangle \partial_\sigma \rho| \leq |\Psi_\sigma| |\Psi_q| \partial_\sigma \rho| \leq |\Psi_\sigma|^2 + (|\Psi_q| \partial_\sigma \rho)^2
		\end{align*}
		by Cauchy-Schwartz-inequality and Young's inequality. Using this, we obtain directly
		\begin{align*}
		\|J(\rho)\|^2_{L_{2, \mu} (J)} 
		&\leq 2 \int_{J} t^{2(1 - \mu)} \left(|\Psi_\sigma|^2 + |\Psi_q|^2(\partial_\sigma \rho)^2 \right) \dd t
		\end{align*}
		for $\sigma \in \{0, 1\}$. For the semi-norm, the properties of the scalar product for $\Psi_\sigma(t) = \Psi_\sigma(\sigma, \rho(t, \sigma))$ yield
		\begin{align*}
		[J(\rho)]^2_{W^{\nicefrac{5}{8}}_{2, \mu} (J)} 
		&\leq C \int_{0}^{T} \int_{0}^{t} \tau^{2 (1 - \mu)} \frac{|\Psi_\sigma(t) - \Psi_\sigma(\tau)|^2 + |\Psi_q(\partial_\sigma \rho(t) - \partial_\sigma \rho(\tau))|^2}{|t - \tau|^{1 + 2 \frac{5}{8}}} \dd \tau \dd t, 
		\end{align*}
		where we used $(a+b)^2 \leq 2(a^2 + b^2)$. Note that $\Psi_q$ does not depend on $t$, as it is independent of $\rho$. This shows the first claim and the first estimate. Combining this result with Lemma \ref{regu1}, the second claim and the estimate follows directly by Lemma \ref{ban-alg}.\ref{lem3}.
	\end{proof}
	
	\begin{claim} \label{lem4}
		For $\rho \in \mathcal{B}_{K, T}$ with $0 < T < \tilde{T}$, it holds for $\sigma \in \{0, 1\}$ that
		\begin{align*}
		\left\langle \Psi^{\beta}_{(\sigma, q)} , R \Psi^{\gamma}_{(\sigma, q)} \right\rangle (\sigma, \rho) \in W^{\nicefrac{5}{8}}_{2, \mu} (J)
		\end{align*}
		for $\beta, \gamma \in \mathbb{N}_0^2$, such that $|\beta|, |\gamma| \geq 1$ and $|\beta| + |\gamma| \leq 4$. The $W^{\nicefrac{5}{8}}_{2, \mu} (J)$-norm of the quantity is bounded by a constant $C(\alpha, \Phi^*, \eta, K, \tilde{T})$.
	\end{claim}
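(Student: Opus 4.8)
\emph{Proof proposal.} The plan is to view the scalar product as a finite sum of products of scalar functions, each of which is a $C^1$ function of the variable $q$ composed with $t\mapsto\rho(t,\sigma)$ at the fixed boundary point $\sigma\in\{0,1\}$, to show that each such composition lies in $W^{\nicefrac58}_{2,\mu}(J)$, and finally to reduce the product to the Banach algebra property of Lemma~\ref{ban-alg}.\ref{lem2}. The one step that needs genuine care is the composition estimate, i.e.\ transferring the fractional weighted smoothness of $\rho$ through a nonlinear substitution while keeping the constant uniform over $\mathcal{B}_{K,T}$.

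First I would fix $\sigma\in\{0,1\}$ and record the regularity of the building blocks. Since $|\beta|+|\gamma|\le 4$ and $|\beta|,|\gamma|\ge 1$, each of $\Psi^{\beta}_{(\sigma,q)}$ and $\Psi^{\gamma}_{(\sigma,q)}$ is a derivative of $\Psi$ of order at most three; as $\Psi\in C^4$ by Remark~\ref{smooth}.\ref{linf}, the maps $q\mapsto\Psi^{\beta}_{(\sigma,q)}(\sigma,q)$ and $q\mapsto\Psi^{\gamma}_{(\sigma,q)}(\sigma,q)$ are of class $C^1$ on $(-d,d)$, and on the compact range of $\rho$ (contained in $(-d,d)$ by Lemma~\ref{regu1}) they are Lipschitz with constant controlled by $\|\Psi\|_{C^4}$. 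By \eqref{rhoreg} we have $\rho(\cdot,\sigma)\in W^{\nicefrac78}_{2,\mu}(J)\hookrightarrow W^{\nicefrac58}_{2,\mu}(J)$, with norm bounded by $C(T_0)(\|\rho\|_{\mathbb{E}_{\mu,T}}+\|\rho_0\|_{X_\mu})$ uniformly in $\rho\in\mathcal{B}_{K,T}$.

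Next comes the composition step. Using the seminorm representation of the Slobodetskii space, cf.\ Remark~\ref{nummer1}, and the pointwise Lipschitz bound $|g(\rho(t))-g(\rho(\tau))|\le L\,|\rho(t)-\rho(\tau)|$ for $g=\Psi^{\beta}_{(\sigma,\cdot)}$ or $g=\Psi^{\gamma}_{(\sigma,\cdot)}$, one obtains $[g\circ\rho(\cdot,\sigma)]_{W^{\nicefrac58}_{2,\mu}(J)}\le L\,[\rho(\cdot,\sigma)]_{W^{\nicefrac58}_{2,\mu}(J)}$, since the weight $\tau^{2(1-\mu)}$ and the kernel $|t-\tau|^{-(1+2s)}$ with $s=\nicefrac58$ are untouched by the substitution; together with the trivial bound $\|g\circ\rho\|_{L_{2,\mu}(J)}\le\|g\|_{C}\,\|1\|_{L_{2,\mu}(J)}$ this yields, componentwise, $\Psi^{\beta}_{(\sigma,q)}(\sigma,\rho(\cdot,\sigma))\in W^{\nicefrac58}_{2,\mu}(J)$ with norm bounded uniformly for $\rho\in\mathcal{B}_{K,T}$ by $C(\alpha,\Phi^*,\eta,K,\tilde T)$. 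Because $\rho_{|t=0}=\rho_0$ on $\mathcal{B}_{K,T}$, the trace of this composition at $t=0$ equals the constant $\Psi^{\beta}_{(\sigma,q)}(\sigma,\rho_0(\sigma))$, so subtracting it produces an element of ${}_0W^{\nicefrac58}_{2,\mu}(J)$ by the characterization in Remark~\ref{nummer1} (valid since $\mu>\tfrac78$), and likewise for $\gamma$.

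Finally I would expand $\langle\Psi^{\beta}_{(\sigma,q)},R\Psi^{\gamma}_{(\sigma,q)}\rangle(\sigma,\rho)$ as a finite sum of products of the scalar components, writing each component as (constant)$\;+\;$(${}_0W^{\nicefrac58}$-function) via the previous step. Multiplying out, the purely constant terms lie in $W^{\nicefrac58}_{2,\mu}(J)$ trivially, the mixed constant$\;\times\;{}_0W^{\nicefrac58}$ terms lie in ${}_0W^{\nicefrac58}_{2,\mu}(J)$, and the ${}_0W^{\nicefrac58}\times{}_0W^{\nicefrac58}$ terms lie in ${}_0W^{\nicefrac58}_{2,\mu}(J)$ by the Banach algebra property of Lemma~\ref{ban-alg}.\ref{lem2}; summing yields the claim in $W^{\nicefrac58}_{2,\mu}(J)$. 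The stated uniform bound follows by collecting the estimates, all of which depend only on $\|\Psi\|_{C^4}$, the range bound from Lemma~\ref{regu1}, and $\|\rho\|_{\mathbb{E}_{\mu,T}}\le K$, hence on $(\alpha,\Phi^*,\eta,K,\tilde T)$. As noted, the composition estimate is the crucial point, the product part being a direct consequence of the already-established algebra structure.
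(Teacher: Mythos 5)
Your proposal is correct, but it takes a genuinely different route at the one step you yourself identify as critical. The paper never proves a composition estimate: it observes that by the definition \eqref{curvilin} the map $q \mapsto \Psi(\sigma,q)$ is \emph{affine}, so every derivative $\Psi^{\beta}_{(\sigma,q)}$ evaluated at $(\sigma,\rho(t,\sigma))$ for fixed $\sigma\in\{0,1\}$ is literally of the form $v_1$ or $v_1+v_2\,\rho(t,\sigma)$ with $v_1,v_2\in\mathbb{R}^2$ constant in $t$; membership in $W^{\nicefrac{5}{8}}_{2,\mu}(J)$ is then immediate from \eqref{rhoreg}, and the scalar product is handled by the Banach algebra property of Lemma \ref{ban-alg}.\ref{lem2} together with Remark \ref{banrem}, exactly as in your last paragraph. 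You instead treat $q\mapsto\Psi^{\beta}_{(\sigma,q)}$ as a general Lipschitz outer function and push the Lipschitz bound through the weighted Slobodetskii seminorm of Remark \ref{nummer1}; this substitution argument is valid for $s=\nicefrac{5}{8}\in(0,1)$ (the weight and kernel are indeed untouched), the $L_{2,\mu}$ part is trivial on a finite interval, and your use of the ${}_0W$-characterization is legitimate since $\nicefrac{5}{8}>1-\mu+\nicefrac{1}{2}$ for $\mu>\nicefrac{7}{8}$. The trade-off: your argument is more robust — it would survive a curvilinear coordinate system that is not affine in the normal variable — at the cost of an extra (though elementary) composition lemma, whereas the paper's argument is shorter because it exploits the specific structure of $\Psi$ and needs nothing beyond linearity in $\rho$ plus the already established product estimates.
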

	
	\begin{proof}[Proof of the claim] By Lemma \ref{ban-alg}.\ref{lem2} and Remark \ref{banrem}, we know that $W^{\nicefrac{5}{8}}_{2, \mu} (J)$ is a Banach algebra up to a constant in the norm estimate. Thus, it suffices to prove $\Psi^{\beta}_{(\sigma, q)}(\sigma, \rho(\cdot, \sigma)) \in W^{\nicefrac{5}{8}}_{2, \mu} (J)$ for $\sigma \in \{0, 1\}$, where $1 \leq |\beta| \leq 4$. Considering the structure of the mapping $[(\sigma, q) \mapsto \Psi (\sigma, q)]$, we obtain that the term has the form
		\begin{align*}
		\Psi^{\beta}_{(\sigma, q)}(\sigma, \rho(t, \sigma))  = \begin{cases}
		v_1 & \textrm{ if } \beta_2 \geq 1, \\
		v_1 + v_2 \rho(t, \sigma), & \textrm{ else, } 
		\end{cases}
		\end{align*}
		where $v_1, v_2 \in \mathbb{R}^2$ are independent of $t$. The claim follows by the regularity properties of $\rho$, see \eqref{rhoreg} and Lemma \ref{ban-alg}.\ref{lem2} together with Remark \ref{banrem}.
	\end{proof}
	Now, we are ready to prove \eqref{beh}: Combining the results of Claim \ref{lem3a} and Claim \ref{lem4} with the Banach algebra property of $W^{\nicefrac{5}{8}}_{2, \mu} (J)$, see Lemma \ref{ban-alg}.\ref{lem2} and Remark \ref{banrem}, we obtain $b_2(\sigma, \rho, \partial_\sigma \rho) \in W^{\nicefrac{5}{8}}_{2, \mu} (J)$ for $\sigma \in \{0, 1\}$
	with a suitable estimate. We recall that $\partial_\sigma^3 \rho(\cdot, \sigma) \in W^{\nicefrac{1}{8}}_{2, \mu} (J)$ for $\sigma \in \{0, 1\}$ by \eqref{rhoreg}. As the product of an element in $W^{\nicefrac{5}{8}}_{2, \mu} (J)$ with one in $W^{\nicefrac{1}{8}}_{2, \mu} (J)$ is again in $W^{\nicefrac{1}{8}}_{2, \mu} (J)$, see Lemma \ref{ban-alg}.\ref{lem2}, we deduce that the first summand of $G_2$ is an element of $W^{\nicefrac{1}{8}}_{2, \mu} (J)$ for $\sigma \in \{0, 1\}$. \\
	
	It remains to consider $g_2(\sigma, \rho, \partial_\sigma \rho, \partial_\sigma^2 \rho)$,	cf.\ \eqref{darg}. We want to prove that the prefactors $T(\sigma, \rho, \partial_\sigma \rho)$ are in $W^{\nicefrac{5}{8}}_{2, \mu} (J)$ and the terms $(\partial^2_\sigma \rho)^2$ and $\partial^2_\sigma \rho$ are elements of $W^{\nicefrac{1}{8}}_{2, \mu} (J)$ for $\sigma \in \{0, 1\}$. Then, the claim follows by Lemma \ref{ban-alg}.\ref{lem1} and Remark \ref{banrem}, which state that the products of these functions are again in $W^{\nicefrac{1}{8}}_{2, \mu} (J)$ for $\sigma \in \{0, 1\}$ with a corresponding estimate. 
	
	Recalling the structure of $T(\sigma, \rho, \partial_{\sigma} \rho)$, cf.\eqref{t}, the claim follows directly by combining Claim \ref{lem3a}, Claim \ref{lem4}, and the regularity of $\rho$, see \eqref{rhoreg}, Lemma \ref{ban-alg}.\ref{lem2a} and Remark \ref{banrem}. 
	
	Finally, we take care of the factors $(\partial^2_\sigma \rho)^2$ and $\partial^2_\sigma \rho$, respectively: By \eqref{rhoreg}, we obtain $\partial^2_\sigma \rho \in W^{\nicefrac{3}{8}}_{2, \mu} (J)$ for $\sigma \in \{0, 1\}$. 
	Additionally, it follows by Lemma \ref{ban-alg}.\ref{lem2a} that $(\partial^2_\sigma \rho)^2 \in W^{\nicefrac{1}{8}}_{2, \mu} (J)$ for $\sigma \in \{0, 1\}$.
	This completes the proof.
\end{proof}

\begin{rem}
	Combining Theorem \ref{linex} and Lemma \ref{lemab}, we obtain that the linear problem \eqref{linprob} possesses a unique solution $\rho \in  \mathbb{E}_{\mu, T}$, for $\bar{\rho} \in \mathcal{B}_{K, T}$ for $0 < T < \tilde{T}$, if the conditions on the initial datum are fulfilled. This enables us to invert the linear operator in equation \eqref{equ} and consequently we receive the fixed point problem
	\begin{align*}
	\rho = \mathcal{L}^{-1}(\mathcal{F}({\rho}), \rho_0) && \textrm{ for } \rho \in \mathcal{B}_{K, T} \textrm{ with } 0 < T < \tilde{T}, 
	\end{align*}
	which is to be solved by Banach's fixed point theorem.
\end{rem}

\section{The Contraction Mapping} \label{cont}

The next step is to show that the nonlinear operator is contractive. To this end, we state the following lemma:
\begin{lem} \label{contraction}
	Let the assumptions of Theorem \ref{local} hold true and let $K$ and $\tilde{T}$ be given by Definition \ref{ass}. Then
	\begin{align*}
	\mathcal{F}: \mathcal{B}_{K, T} \rightarrow \mathbb{E}_{0, \mu} \times \tilde{\mathbb{F}}_{\mu} &&\textrm{ for } 0 < T < \tilde{T},
	\end{align*}
	given by \eqref{nonlin},
	is Lipschitz continuous with a constant $C_{\mathcal{F}_T} = C(\alpha, \Phi^*, \eta, K, R, \tilde{T}, T)$  with $\|\rho_0\|_{X_{\mu}} \leq R$ satisfying $C_{\mathcal{F}_T} \rightarrow 0$ monotonically as $T \rightarrow 0$. 
\end{lem}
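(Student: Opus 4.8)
The plan is to bound, for $\rho_1,\rho_2 \in \mathcal{B}_{K,T}$,
\[
\|\mathcal{F}(\rho_1)-\mathcal{F}(\rho_2)\|_{\mathbb{E}_{0,\mu}\times\tilde{\mathbb{F}}_\mu}\le C_{\mathcal{F}_T}\,\|\rho_1-\rho_2\|_{\mathbb{E}_{\mu,T}},
\]
with $C_{\mathcal{F}_T}\to 0$ monotonically as $T\to0$, by re-using verbatim the product and algebra estimates of Lemma \ref{lemab}, which were stated for general arguments precisely for this purpose. Two structural facts drive the argument. First, since $\rho_1|_{t=0}=\rho_2|_{t=0}=\rho_0$, the difference $\rho_1-\rho_2$ has vanishing initial trace, so all embeddings applied to it (Theorem \ref{embl1}, Lemma \ref{unabneu}, Proposition \ref{embed}) hold with constants uniform in $T$. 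Second, and this is the heart of the matter, every coefficient evaluated against the frozen datum enters $\mathcal{F}$ as a difference that vanishes at $t=0$: both $\tfrac{1}{J(\rho)^4}-\tfrac{1}{J(\rho_0)^4}$ and, after telescoping, $\tfrac{1}{J(\rho_1)^4}-\tfrac{1}{J(\rho_2)^4}$, $S(\rho_1)-S(\rho_2)$, $b_2(\rho_1)-b_2(\rho_2)$, $T(\rho_1)-T(\rho_2)$ vanish at $t=0$ because $\rho_1,\rho_2$ share the initial value $\rho_0$. By the temporal Hölder embedding $\mathbb{E}_{\mu,T}\hookrightarrow C^{\bar\alpha}(\bar J;C^1(\bar I))$ of Lemma \ref{unabneu}.\ref{unab1a}, any such difference $h$ with $h|_{t=0}=0$ obeys $\|h\|_{C(\bar J;C(\bar I))}\le T^{\bar\alpha}[h]_{C^{\bar\alpha}(\bar J;C(\bar I))}$, and the Hölder seminorm is controlled by $\|\rho_1-\rho_2\|_{\mathbb{E}_{\mu,T}}$ through the Lipschitz dependence of these smooth coefficients on $(\rho,\partial_\sigma\rho)$, exactly as in Claim \ref{lip}. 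This furnishes the decisive factor $T^{\bar\alpha}$.

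For the first component I would estimate $\|F(\rho_1)-F(\rho_2)\|_{\mathbb{E}_{0,\mu}}$ term by term after a telescoping expansion. The top-order quasilinear contribution
\[
\Bigl(\tfrac{1}{J(\rho_1)^4}-\tfrac{1}{J(\rho_2)^4}\Bigr)\partial_\sigma^4\rho_1+\Bigl(\tfrac{1}{J(\rho_2)^4}-\tfrac{1}{J(\rho_0)^4}\Bigr)\partial_\sigma^4(\rho_1-\rho_2)
\]
is handled by placing the coefficient difference in $C(\bar J;C(\bar I))$ (gaining $T^{\bar\alpha}$ from its vanishing trace) and the remaining fourth derivative in $L_{2,\mu}(J;L_2(I))$, where it is bounded by $K$, respectively by $\|\rho_1-\rho_2\|_{\mathbb{E}_{\mu,T}}$. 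For $f$, cf.\ \eqref{darf}, I expand each monomial by multilinearity. The genuinely multilinear summands, those with at least two derivative factors such as $S\,\partial_\sigma^3\rho\,\partial_\sigma^2\rho$ and $S(\partial_\sigma^2\rho)^3$, reduce with one slot filled by $\rho_1-\rho_2$ to the estimates \eqref{contr} and \eqref{contr1}, which already carry a constant $C(T)\to0$; the accompanying prefactor differences contribute the extra $T^{\bar\alpha}$. The summands linear or constant in the derivatives ($S\,\partial_\sigma^3\rho$, $S\,\partial_\sigma^2\rho$, $S$) either inherit $T^{\bar\alpha}$ from the prefactor difference or gain a positive power of $T$ from a single-factor application of Proposition \ref{embed} with exponent $\theta\in(0,1)$ (choosing $q>2$ when $\mu=1$) followed by Hölder in time; the pure constant is multiplied by $\|1\|_{L_{2,\mu}(J;L_2(I))}\le CT^{3/2-\mu}$. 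In every case the contribution is $\le C(T)\|\rho_1-\rho_2\|_{\mathbb{E}_{\mu,T}}$ with $C(T)\to0$.

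The boundary component is treated in $W^{\nicefrac{1}{8}}_{2,\mu}(J)$ at the two points $\sigma\in\{0,1\}$, with $G_1\equiv0$. Here I would mirror the corresponding part of Lemma \ref{lemab}: telescope $b_2(\rho_1)\partial_\sigma^3\rho_1-b_2(\rho_2)\partial_\sigma^3\rho_2$ and the $g_2$-summands, and apply the Banach-algebra product estimates Lemma \ref{ban-alg}.\ref{lem1}, \ref{ban-alg}.\ref{lem2}, \ref{ban-alg}.\ref{lem2a} together with Remark \ref{banrem}, whose constants tend to $0$ as $T\to0$. The boundary regularity \eqref{rhoreg} provides $\partial_\sigma^k(\rho_1-\rho_2)\in W^{\nicefrac{(7-2k)}{8}}_{2,\mu}(J)$ with uniform constants (vanishing trace), while the coefficient differences $b_2(\rho_1)-b_2(\rho_2)$ and $T(\rho_1)-T(\rho_2)$ lie in $W^{\nicefrac{5}{8}}_{2,\mu}(J)$ and depend Lipschitz-continuously on $\rho_1-\rho_2$ by Claim \ref{lem3a}, Claim \ref{lem4}, the reciprocal estimate Lemma \ref{ban-alg}.\ref{lem3} for $\tfrac{1}{J(\rho)}$, and the algebra property.

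The main obstacle is the top-order and, more generally, the highest-derivative-linear terms, for which the bare product estimates do not by themselves produce a vanishing constant, since there is no regularity to spare above $\partial_\sigma^4\rho$ in $\mathbb{E}_{\mu,T}$. The resolution is exactly the frozen-coefficient structure of the linearization: because the offending coefficient always occurs as a difference vanishing at $t=0$, it can be traded, via the temporal Hölder embedding of zero-trace functions, for a factor $T^{\bar\alpha}$, while the uncontrolled top derivative is merely kept bounded in $\mathbb{E}_{\mu,T}$. The remaining delicacy is bookkeeping: one must verify that every constant is uniform in $T\in(0,\tilde T)$ and depends only on $\alpha,\Phi^*,\eta,K,R$ (the dependence on $R$ entering through $\|\rho_0\|_{X_\mu}\le R$ and the frozen coefficient $\tfrac{1}{J(\rho_0)^4}$), and that the aggregate bound is monotone in $T$ with limit $0$; this follows by collecting the $T^{\bar\alpha}$, $T^{3/2-\mu}$, and $C(T)$ factors and using throughout the zero-trace spaces and the $T$-independent embedding constants recorded in Lemma \ref{unabneu} and Proposition \ref{embed}.
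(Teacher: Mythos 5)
Your proposal is correct and follows essentially the same route as the paper's proof: the same telescoping decompositions of $F$ and $G_2$, the same exploitation of the vanishing initial trace of coefficient differences via the H\"older embedding $\mathbb{E}_{\mu,T}\hookrightarrow C^{\bar\alpha}(\bar J;C^1(\bar I))$ to extract the factor $T^{\bar\alpha}$ for the top-order terms, reuse of the product estimates \eqref{contr} and \eqref{contr1} for the multilinear interior terms, and the Banach-algebra estimates of Lemma \ref{ban-alg} together with \eqref{rhoreg} for the boundary component. The only differences are cosmetic (a different but equivalent grouping in the telescoping of the quasilinear term, and a slightly more explicit treatment of the lower-order summands of $f$).
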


\begin{proof}
	Let $\rho_1$ and $\rho_2$ be in $\mathcal{B}_{K, T}$ for $0 < T < \tilde{T}$.\\
	First, we concentrate on the first line of $\mathcal{F}$. By adding zeros, it holds
	\begin{align*}
	F(\rho_1) - F(\rho_2) 
	&= \underbrace{- \left( \frac{1}{(J(\rho_1))^4} - \frac{1}{(J(\rho_0))^4} \right) \left( \partial_\sigma^4 \rho_1 - \partial_\sigma^4 \rho_2 \right)}_{=: I} \underbrace{-  \left( \frac{1}{(J(\rho_1))^4} - \frac{1}{(J(\rho_2))^4} \right) \partial_\sigma^4 \rho_2}_{=: II} \\ 
	&~~ + \underbrace{\left( f(\rho_1, \partial_\sigma \rho_1, \partial_\sigma^2 \rho_1, \partial_\sigma^3 \rho_1) - f(\rho_2, \partial_\sigma \rho_2, \partial_\sigma^2 \rho_2, \partial_\sigma^3 \rho_2) \right).}_{=: III}
	\end{align*}
	We begin with the first factors of $I$ and $II$, which can be estimated similarly. For $II$, we obtain analogously to Claim \ref{lip}
	\begin{align*}
	\left\|\frac{1}{(J(\rho_1))^4} - \frac{1}{(J(\rho_2))^4} \right\|_{C(\bar{J}; C(\bar{I}))} &\leq L \|J(\rho_1) - J(\rho_2)\|_{C(\bar{J}; C(\bar{I}))},
	\end{align*}
	where $L$ does not depend on $T$. We recall that for $\rho \in \mathcal{B}_{K, T}$, $0 < T < \tilde{T}$, by Lemma \ref{regu1} it follows that $\rho$ and $\partial_{\sigma} \rho$ are bounded in $C^0([0, 1] \times [0, T])$ by $\nicefrac{2K_0}{3}$ and $\nicefrac{2K_1}{3}$, respectively. Thus, we can exploit the Lipschitz continuity of
	\begin{align*}
	[0, 1] \times [-\nicefrac{2K_0}{3}, \nicefrac{2K_0}{3}] \times [-\nicefrac{2K_1}{3}, \nicefrac{2K_1}{3}] \ni (\sigma, \rho, \partial_\sigma \rho) \mapsto J(\rho)
	\end{align*}
	for $\sigma \in \bar{I}$, $\rho \in \mathcal{B}_{K, T}$, which follows by the fact that $J(\rho)$ is continuously differentiable with respect to the variables $(\sigma, \rho, \partial_\sigma \rho)$. The Lipschitz constant depends on $\alpha, \Phi^*, \eta$, and $K$, but not on $T$. We obtain
	\begin{align*}
	\left\|\frac{1}{(J(\rho_1))^4} - \frac{1}{(J(\rho_2))^4} \right\|_{C(\bar{J}; C(\bar{I}))} 
	&\leq C(\alpha, \Phi^*, \eta, K) \|(\rho_1, \partial_\sigma \rho_1) - (\rho_2, \partial_\sigma \rho_2)\|_{C(\bar{J}; C(\bar{I}))} \nonumber \\
	&\leq C(\alpha, \Phi^*, \eta, K) \|\rho_1 - \rho_2\|_{C(\bar{J}; C^1(\bar{I}))} \\ 
	&\leq C(\alpha, \Phi^*, \eta, K) T^{\alpha} \|\rho_1 - \rho_2\|_{C^{\alpha}(\bar{J}; C^1(\bar{I}))},
	\end{align*}
	where $C(\alpha, \Phi^*, \eta, K)$ does not depend on $T$. Here we additionally used that $\rho_{i |t =0} = \rho_0$ for $i = 1, 2$.
	Replacing $\rho_2$ by $\rho_0$, we obtain for summand $I$
	\begin{align*}
	\left\|\frac{1}{(J(\rho_1))^4} - \frac{1}{(J(\rho_0))^4} \right\|_{C(\bar{J}; C(\bar{I}))} 
	\leq C(\alpha, \Phi^*, \eta, K) T^{\alpha} \|\rho_1 - \rho_0\|_{C^{\alpha}(\bar{J}; C^1(\bar{I}))}.
	\end{align*}
	Next, the embedding 
	\begin{align*}
	\mathbb{E}_{\rho, \mu, T} \hookrightarrow C^{{\alpha}} \left(\bar{J}; C^1(\bar{I}) \right)
	\end{align*}
	is employed, where the operator norm does not depend on $T$, if a suitable norm is used,
	cf.\ Lemma \ref{unabneu}.\ref{unab1a}. Therefore, it holds
	\begin{align*}
	\|I\|_{L_{2, \mu} (J; L_{2}(I))} 
	&\leq C(\alpha, \Phi^*, \eta, K) T^{\alpha} \|\rho_1\|_{C^{\alpha}(\bar{J}; C^1(\bar{I}))} \|\rho_1 - \rho_2\|_{ \mathbb{E}_{\mu, T}} \\ 
	&\leq C(\alpha, \Phi^*, \eta, K) T^{\alpha} \tilde{C} (\|\rho_1\|_{\mathbb{E}_{\mu, T}} + \|{\rho_{1}}_{|t=0}\|_{X_{\mu}}) \|\rho_1 - \rho_2\|_{ \mathbb{E}_{\mu, T}} \\ 
	&\leq C(\alpha, \Phi^*, \eta, K, \|\rho_{0}\|_{X_{\mu}}, T) \|\rho_1 - \rho_2\|_{\mathbb{E}_{\mu, T}},
	\end{align*}
	where we used that $\rho_1 \in \mathcal{B}_{K, T}$. Note that $C(\alpha, \Phi^*, \eta, K, \|\rho_{0}\|_{X_{\mu}}, T) \rightarrow 0$ monotonically as $T \rightarrow 0$. For the second summand $II$, it follows analogously
	\begin{align*}
	\|II\|_{L_{2, \mu} (J; L_{2}(I))} 
	&\leq C(\alpha, \Phi^*, \eta, K, T) \|\rho_1 - \rho_2\|_{\mathbb{E}_{\mu, T}}.
	\end{align*}
	In the following, we take care of summand $III$: By adding zeros, we obtain
	\begin{align}
	f(\rho_1, \partial_\sigma \rho_1, & \; \partial_\sigma^2 \rho_1, \partial_\sigma^3 \rho_1) - f(\rho_2, \partial_\sigma \rho_2, \partial_\sigma^2 \rho_2, \partial_\sigma^3 \rho_2) \nonumber \\ 
	&= S(\rho_1) \left(\partial^2_\sigma \rho_1 \partial_\sigma^3 \rho_1 - \partial^2_\sigma \rho_2 \partial_\sigma^3 \rho_2 \right) + \left(S(\rho_1) - S(\rho_2)\right) \partial^2_\sigma \rho_2 \partial_\sigma^3 \rho_2 \nonumber \\
	&~~ + S(\rho_1) \left(\partial_\sigma^3 \rho_1 - \partial_\sigma^3 \rho_2\right) + (S(\rho_1) - S(\rho_2)) \partial_\sigma^3 \rho_2 + S(\rho_1) \left((\partial^2_\sigma \rho_1)^3 - (\partial^2_\sigma \rho_2)^3 \right)  \nonumber \\
	&~~ + (S(\rho_1)- S(\rho_2)) \left(\partial^2_\sigma \rho_2\right)^3 + S(\rho_1) \left((\partial_\sigma^2 \rho_1)^2 - (\partial_\sigma^2 \rho_2)^2\right) \nonumber \\
	&~~ + (S(\rho_1) - S(\rho_2)) (\partial_\sigma^2 \rho_2)^2 + S(\rho_1) \left(\partial_\sigma^2 \rho_1 - \partial_\sigma^2 \rho_2 \right) + (S(\rho_1) - S(\rho_2)) \partial_\sigma^2 \rho_2 \nonumber \\
	&~~ + (S(\rho_1) - S(\rho_2)).
	\label{eg}
	\end{align}
	where we denote by ${S}(\rho_i) = S(\sigma, \rho_i, \partial_{\sigma} \rho_i)$, see \eqref{s} and \eqref{tildes}.
	
	First, we inspect the summands which have a factor $(S(\rho_1) - S(\rho_2))$.  
	We observe that for every $\sigma \in [0, 1]$ the mapping
	\begin{align*}
	[-\nicefrac{2K_0}{3}, \nicefrac{2K_0}{3}] \times [-\nicefrac{2K_1}{3}, \nicefrac{2K_1}{3}] \ni (\rho, \partial_\sigma \rho) \mapsto S(\sigma, \rho, \partial_\sigma \rho)
	\end{align*}
	is Lipschitz-continuous with a Lipschitz constant depending on $\alpha, \Phi^*, \eta$, and $K$. By this, we obtain by the same strategy as for the summands $I$ and $II$
	\begin{align}
	\|S(\rho_1) - S(\rho_2)\|_{C(\bar{J}; C(\bar{I}))} 
	&\leq C(\alpha, \Phi^*, \eta, K, T) \|\rho_1 - \rho_2\|_{C^{\alpha}(\bar{J}; C^1(\bar{I}))},
	\label{coef}
	\end{align}
	where $C(\alpha, \Phi^*, \eta, K, T) \rightarrow 0$ monotonically for $T \rightarrow 0$.
	Furthermore, we have
	\begin{align}
	\|S(\rho_1)\|_{C(\bar{J}; C((\bar{I}))} &\leq \|S(\rho_1) - S(\rho_0)\|_{C(\bar{J}; C((\bar{I}))} + \|S(\rho_0)\|_{C(\bar{J}; C(\bar{I}))} \nonumber \\
	&\leq C(\alpha, \Phi^*, \eta, K) \|\rho_1 - \rho_0\|_{C(\bar{J}; C^1((\bar{I}))} + \|S(\rho_0)\|_{C(\bar{J}; C(\bar{I}))} \nonumber \\
	&\leq C(\alpha, \Phi^*, \eta, K) \left( \|\rho_1\|_{ \mathbb{E}_{\mu, T}} +  \|\rho_0\|_{X_\mu} + \|\rho_0\|_{C^1(\bar{I})} + 1 \right) \nonumber \\
	&\leq C(\alpha, \Phi^*, \eta, K, \|\rho_0\|_{X_\mu}),
	\label{coef1}
	\end{align}
	where $C(\alpha, \Phi^*, \eta, K, \|\rho_0\|_{X_\mu})$ does not depend on $T$.
	By adding a zero, the first summand of \eqref{eg} reads
	\begin{align*}
	S(\rho_1) \left(\partial^2_\sigma \rho_1 \partial_\sigma^3 \rho_1 - \partial^2_\sigma \rho_2 \partial_\sigma^3 \rho_2 \right) 
	= S(\rho_1) \partial^2_\sigma \rho_1 (\partial_\sigma^3 \rho_1 - \partial_\sigma^3 \rho_2) + S(\rho_1) (\partial^2_\sigma \rho_1 - \partial^2_\sigma \rho_2) \partial_\sigma^3 \rho_2
	\end{align*}
	and can be estimated by \eqref{contr}: Combining this with Lemma \ref{unabneu}.\ref{unab1} and \eqref{coef1}, we obtain
	\begin{align*}
	\left\|S(\rho_1) \partial^2_\sigma \rho_j \right. & \left. (\partial_\sigma^3 \rho_1 - \partial_\sigma^3 \rho_2) \right\|_{L_{2, \mu} (J; L_{2} (I))} \leq C(T) \|S(\rho_1)\|_{C(\bar{J}; C((\bar{I}))} \Big(\|\rho_j\|_{L_{\infty}(J, W_{2}^{4(\mu - \nicefrac{1}{2})}(I))} \\
	&~~ + \| \rho_j \|_{L_{2, \mu}(J, W^4_2 (I))} \Big) \left(\|\rho_1 - \rho_2\|_{L_{\infty}(J, W_{2}^{4(\mu - \nicefrac{1}{2})}(I))} \| + \| \rho_1 - \rho_2 \|_{L_{2, \mu}(J, W^4_2 (I))} \right) \\
	&\leq C(\alpha, \Phi^*, \eta, K, T) \left[ C \left(\|\rho_j\|_{ \mathbb{E}_{\mu, T}} + \|\rho_{0}\|_{X_{\mu}} \right) + \|\rho_j\|_{ \mathbb{E}_{\mu, T}} \right] \|\rho_1 - \rho_2 \|_{ \mathbb{E}_{\mu, T}} \\
	&\leq C(\alpha, \Phi^*, \eta, K, \|\rho_{0}\|_{X_{\mu}}, T) \|\rho_1 - \rho_2 \|_{ \mathbb{E}_{\mu, T}},
	\end{align*}
	for $j = 1, 2$ and a $C(\alpha, \Phi^*, \eta, K, T, \|\rho_{0}\|_{X_{\mu}}) \rightarrow 0$ monotonically for $T \rightarrow 0$. The second summand of \eqref{eg} can be treated analogously. Moreover, similar estimates hold true for all the summands except for
	\begin{align*}
	S(\rho_1) \left((\partial^2_\sigma \rho_1)^3 - (\partial^2_\sigma \rho_2)^3 \right) + (S(\rho_1)- S(\rho_2)) \left(\partial^2_\sigma \rho_2\right)^3.
	\end{align*}
	We observe that they can be estimated by using \eqref{contr1} instead of \eqref{contr} together with the estimate on the prefactors \eqref{coef} and \eqref{coef1}, respectively.\\
	
	There is nothing to show for the second component of $\mathcal{F}_T$.
	Finally, we take a look at the last component of $\mathcal{F}_T$: By adding a zero, we have
	\begin{align*}
	G_2(\rho_1) - G_2(\rho_2) 
	&= \underbrace{- \left(b_2(\rho_1) - b_2(\rho_0)\right) \left(\partial_\sigma^3 \rho_1 - \partial_\sigma^3 \rho_2\right)}_{=: I} \underbrace{- \left(b_2(\rho_1) - b_2(\rho_2)\right) \partial_\sigma^3 \rho_2}_{=: II}\\ 
	&~~ \underbrace{- \left(g_2(\rho_1) - g_2(\rho_2)\right)}_{=: III},
	\end{align*}
	where $b_2(\rho) = b_2(\sigma, \rho, \partial_\sigma \rho)$ and $g_2(\rho) = g_2(\sigma, \rho, \partial_\sigma \rho, \partial_\sigma^2 \rho)$, respectively. By
	Lemma \ref{ban-alg}.\ref{lem1}, we deduce for the first summand $I$
	\begin{align*}
	\left\|I \right\|_{W^{\nicefrac{1}{8}}_{2, \mu} (J)} \leq C(T) \left\|b_2(\rho_1) - b_2(\rho_0)\right\|_{W^{\nicefrac{5}{8}}_{2, \mu} (J)} \left\| \partial_\sigma^3 \rho_1 - \partial_\sigma^3 \rho_2 \right\|_{W^{\nicefrac{1}{8}}_{2, \mu} (J)},
	\end{align*}
	where $C(T) \rightarrow 0$ monotonically for $T \rightarrow 0$, since $(b_2(\rho_1) - b_2(\rho_0))_{|t=0} = 0$ by ${\rho_1}_{|t=0} = \rho_0$. Considering the representation of $b_2$, cf.\ \eqref{coeffs}, we can estimate the first factor by combining the results of Claim \ref{lem3a} and Claim \ref{lem4} with the Banach-algebra property of $W^{\nicefrac{5}{8}}_{2, \mu} (J)$, see Lemma \ref{ban-alg}.\ref{lem2}, and Remark \ref{banrem}. Hence, we obtain
	\begin{align*}
	\left\|b_2(\rho_1) \right\|_{W^{\nicefrac{5}{8}}_{2, \mu} (J)} \leq C(\alpha, \Phi^*, \eta, K, \tilde{T}).
	\end{align*}
	Clearly, for $\|b_2(\rho_0)\|_{W^{\nicefrac{5}{8}}_{2, \mu} (J)}$ an analogous estimate holds, as the term does depend on time and, thus, it follows
	\begin{align*}
	\left\|I \right\|_{W^{\nicefrac{1}{8}}_{2, \mu} (J)} 
	\leq C(\alpha, \Phi^*, \eta, K, \tilde{T}, T) \left\| \rho_1 - \rho_2 \right\|_{\mathbb{E}_{\mu, T}}
	\end{align*}
	for $C(\alpha, \Phi^*, \eta, K, \tilde{T}, T) \rightarrow 0$ monotonically for $T \rightarrow 0$. Here, we additionally used the mapping Lemma \ref{unabneu}.\ref{unab5678} to estimate $\| \partial_\sigma^3 \rho_1 - \partial_\sigma^3 \rho_2 \|_{W^{\nicefrac{1}{8}}_{2, \mu} (J)}$.
	
	We proceed with the second summand $II$: In order to estimate the first factor, we use that for $\sigma = 0, 1$ the mapping
	\begin{align*}
	[-\nicefrac{2K_0}{3}, \nicefrac{2K_0}{3}] \times [-\nicefrac{2K_1}{3}, \nicefrac{2K_1}{3}] \ni (\rho, \partial_\sigma \rho) \mapsto b_2(\sigma, \rho, \partial_\sigma \rho)
	\end{align*}
	is Lipschitz-continuous with a Lipschitz constant depending on $\alpha, \Phi^*, \eta$, and $K$. Thus, the estimate
	\begin{align*}
	\| b_2(\rho_1) - b_2(\rho_2)\|_{W^{\nicefrac{5}{8}}_{2, \mu} (J)} 
	&\leq C(\alpha, \Phi^*, \eta, K) \left\| \rho_1 - \rho_2 \right\|_{\mathbb{E}_{\mu, T}} 
	\end{align*}
	is obtained, where we again used the mapping Lemma \ref{unabneu}.\ref{unab5678}. It follows similarly
	to the argumentation for the first summand $I$
	\begin{align*}
	\left\|II \right\|_{W^{\nicefrac{1}{8}}_{2, \mu} (J)} &\leq C(\alpha, \Phi^*, \eta, K, \|\rho_0\|_{X_\mu}, \tilde{T}, T) \left\| \rho_1 - \rho_2 \right\|_{\mathbb{E}_{\mu, T}},
	\end{align*}
	where $C(\alpha, \Phi^*, \eta, K, \|\rho_0\|_{X_\mu}, \tilde{T}, T) \rightarrow 0$ monotonically for $T \rightarrow 0$.
	
	For part $III$, we consider the representation of $g_2$, see \eqref{darg}: By adding a zero
	\begin{align*}
	g_2(\rho_1,) - g_2(\rho_2) 
	&= \underbrace{T(\rho_1) \left( (\partial^2_\sigma \rho_1)^2 - (\partial^2_\sigma \rho_2)^2 \right)}_{=: I_A} + \underbrace{\left( T(\rho_1) - T(\rho_2) \right) (\partial^2_\sigma \rho_2)^2}_{=:II_A} \\ 
	&~~
	+ \underbrace{T(\rho_1) \left( \partial^2_\sigma \rho_1 - \partial^2_\sigma \rho_2 \right)}_{=: I_B} + \underbrace{\left( T(\rho_1) - T(\rho_2) \right) \partial^2_\sigma \rho_2}_{=: II_B} + \underbrace{T(\rho_1) - T(\rho_2)}_{=: II_C},
	\end{align*}
	where $T(\rho) := T(\sigma, \rho, \partial_\sigma \rho)$, see \eqref{t}. The prefactors $T(\rho_i)$, $i = 1, 2$, and $(T(\rho_1) - T(\rho_2))$ can be estimated analogously to $b_2(\rho_i)$ and $(b_2(\rho_1) - b_2(\rho_2))$, respectively. It follows 
	\begin{align}
	\| T(\rho_1) - T(\rho_2)\|_{W^{\nicefrac{5}{8}}_{2, \mu} (J)} &\leq C(\alpha, \Phi^*, \eta, K) \left\| \rho_1 - \rho_2 \right\|_{\mathbb{E}_{\mu, T}}, \label{prev}\\
	\left\|T(\rho_1) \right\|_{W^{\nicefrac{5}{8}}_{2, \mu} (J)} &\leq C(\alpha, \Phi^*, \eta, K, \tilde{T}). \label{prev1}
	\end{align}
	For the summands $II_A, II_B$, and $II_C$ the constant $C(T)$, which tends to zero monotonically for $T \rightarrow 0$, is directly generated by using estimate Lemma \ref{ban-alg}.\ref{lem1}, since $(T(\rho_1) - T(\rho_2))_{|t=0} = 0$ by ${\rho_i}_{|t=0} = \rho_0$ for $i = 1, 2$. Using \eqref{prev}, we deduce
	\begin{align*}
	\left\|\left( T(\rho_1) - T(\rho_2) \right) (\partial^2_\sigma \rho_2)^2 \right\|_{W^{\nicefrac{1}{8}}_{2, \mu} (J)} &\leq C(\alpha, \Phi^*, \eta, K, T) \left\| \rho_1 - \rho_2 \right\|_{\mathbb{E}_{\mu, T}} \left\| (\partial^2_\sigma \rho_2)^2 \right\|_{W^{\nicefrac{1}{8}}_{2, \mu} (J)}, \\
	\left\|\left( T(\rho_1) - T(\rho_2) \right) \partial^2_\sigma \rho_2 \right\|_{W^{\nicefrac{1}{8}}_{2, \mu} (J)} & \leq C(\alpha, \Phi^*, \eta, K, T) \left\| \rho_1 - \rho_2 \right\|_{\mathbb{E}_{\mu, T}} \left\| \partial^2_\sigma \rho_2 \right\|_{W^{\nicefrac{1}{8}}_{2, \mu} (J)},\\
	\left\|T(\rho_1) - T(\rho_2)\right\|_{W^{\nicefrac{1}{8}}_{2, \mu} (J)} &\leq C(T) \left\|T(\rho_1) - T(\rho_2)\right\|_{W^{\nicefrac{5}{8}}_{2, \mu} (J)} \|1\|_{W^{\nicefrac{1}{8}}_{2, \mu} (J)} \left\| \rho_1 - \rho_2 \right\|_{\mathbb{E}_{\mu, T}} \\ 
	&\leq C(\alpha, \Phi^*, \eta, K, \tilde{T}, T) \left\| \rho_1 - \rho_2 \right\|_{\mathbb{E}_{\mu, T}}, 
	\end{align*}
	where the constants $C(\alpha, \Phi^*, \eta, K, \tilde{T}, T) \rightarrow 0$ monotonically for $T \rightarrow 0$. Furthermore, the bounds on $\| (\partial^2_\sigma \rho_2)^2 \|_{W^{\nicefrac{1}{8}}_{2, \mu} (J)}$ and $\| \partial^2_\sigma \rho_2 \|_{W^{\nicefrac{1}{8}}_{2, \mu} (J)}$ follow directly by combining Lemma \ref{unabneu}.\ref{unab5678}, and Lemma \ref{ban-alg}.\ref{lem2a}.
	
	In order to estimate $I_A$ and $I_B$, we exploit the uniform estimate stated in Remark \ref{banrem} together with \eqref{prev1} and deduce
	\begin{align*}
	\left\|T(\rho_1) \left( (\partial^2_\sigma \rho_1)^2 - (\partial^2_\sigma \rho_2)^2 \right) \right\|_{W^{\nicefrac{1}{8}}_{2, \mu} (J)} &\leq C(\alpha, \Phi^*, \eta, K, \tilde{T}) \left\|(\partial^2_\sigma \rho_1 - \partial^2_\sigma \rho_2)(\partial^2_\sigma \rho_1 + \partial^2_\sigma \rho_2) \right\|_{W^{\nicefrac{1}{8}}_{2, \mu} (J)}
	\\
	\left\|T(\rho_1) \left( \partial^2_\sigma \rho_1 - \partial^2_\sigma \rho_2 \right)\right\|_{W^{\nicefrac{1}{8}}_{2, \mu} (J)} &\leq C(\alpha, \Phi^*, \eta, K, \tilde{T}) \left\|\partial^2_\sigma \rho_1 - \partial^2_\sigma \rho_2 \right\|_{W^{\nicefrac{1}{8}}_{2, \mu} (J)} .
	\end{align*}
	Now, we can use the estimate of Lemma \ref{ban-alg}.\ref{lem2a} to generate a constant $C(T) \rightarrow 0$ monotonically for $t \rightarrow 0$. This proves the claim.
\end{proof}

\begin{rem} 
	Note that the formulation of the boundary condition $\partial_{\sigma} \rho (t, \sigma) = 0$ for $\sigma \in \{0, 1\}$ and $t > 0$, \eqref{anglecon}, is very useful for our analysis. It is induced by condition ${\kappa}_{\Lambda} (\sigma) = 0$ for $\sigma \in \{0, 1\}$ on the reference curve. If ${\kappa}_{\Lambda} (\sigma) \neq 0$, we would have to use the boundary condition
	\begin{align*}
	\frac{1}{J(\rho)} \left\langle \Psi_q, \begin{pmatrix} -1 \\ 0 \end{pmatrix} \right\rangle \partial_{\sigma} \rho = - \frac{1}{J(\rho)} \left\langle \Psi_\sigma, \begin{pmatrix} -1 \\ 0 \end{pmatrix} \right\rangle + \cos(\pi - \alpha),
	\end{align*}
	cf.\ \eqref{schlecht}. We observe that the coefficient in front of $\partial_{\sigma} \rho$ itself depends on the first derivative of $\rho$. Since the estimate for the Lipschitz continuity has to be done in the space $W^{\nicefrac{5}{8}}_{2, \mu} (J)$ and both factors can be expected to be elements of this space but do not have higher regularity, we would miss the regularity gap, which we exploited to treat some of the other non-linearities.
\end{rem}

Now, all the tools are available to solve equation \eqref{equ}.
\begin{lem} \label{cml}
	Let the assumptions of Theorem \ref{local} hold true and let $K$ and $\tilde{T}$ be given by Definition \ref{ass}.
	Then there exists a $0 < T < \tilde{T}$, $T = T(\alpha, \Phi^*, \eta, R, \|\mathcal{L}^{-1} \|_{L(\mathbb{E}_{0, \mu} \times \tilde{\mathbb{F}}_{\mu} \times X_{\mu};  \mathbb{E}_{\mu, T})})$ with $\|\rho_0\|_{X_{\mu}} \leq R$, such that there exists a unique solution $\rho \in  \mathbb{E}_{\mu, T}$ for the equation $\mathcal{L}(\rho) = (\mathcal{F}({\rho}), \rho_0)$.
\end{lem}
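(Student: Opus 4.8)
The plan is to recast \eqref{equ} as a fixed point equation and to apply Banach's fixed point theorem on the complete metric space $\mathcal{B}_{K,T}$. By Theorem \ref{linex}, Lemma \ref{lemab}, and Remark \ref{comprem} the operator $\mathcal{L}$ is boundedly invertible on the data space $\mathcal{D}_0$ for every $0<T<\tilde T$, so that $\mathcal{L}(\rho)=(\mathcal{F}(\rho),\rho_0)$ is equivalent to finding a fixed point of
\[
\mathcal{K}(\rho):=\mathcal{L}^{-1}\big(\mathcal{F}(\rho),\rho_0\big),\qquad \rho\in\mathcal{B}_{K,T}.
\]
First I would record that $\mathcal{K}$ is a well-defined map $\mathcal{B}_{K,T}\to\mathbb{E}_{\mu,T}$: Lemma \ref{lemab} places $\mathcal{F}(\rho)$ in $\mathbb{E}_{0,\mu}\times\tilde{\mathbb{F}}_{\mu}$, the compatibility condition \eqref{comp} is automatic by Remark \ref{comprem} since \eqref{komp} holds, and $\mathcal{L}^{-1}$ reproduces the prescribed initial value, whence $\big(\mathcal{K}(\rho)\big)_{|t=0}=\rho_0$. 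As the trace at $t=0$ is continuous, $\mathcal{B}_{K,T}$ is a closed, hence complete, subset of $\mathbb{E}_{\mu,T}$, and it remains to verify the self-mapping and the contraction property.

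For the contraction I would exploit that all arguments share the same initial datum, which therefore cancels in differences. Using the linearity of $\mathcal{L}^{-1}$ on $\mathcal{D}_0$, the uniform bound $\|\mathcal{L}^{-1}\|\le R_2$ for all $T\in(0,T_0]$ from Remark \ref{glm}, and the Lipschitz estimate of Lemma \ref{contraction},
\[
\|\mathcal{K}(\rho_1)-\mathcal{K}(\rho_2)\|_{\mathbb{E}_{\mu,T}}=\big\|\mathcal{L}^{-1}\big(\mathcal{F}(\rho_1)-\mathcal{F}(\rho_2),0\big)\big\|_{\mathbb{E}_{\mu,T}}\le R_2\,C_{\mathcal{F}_T}\,\|\rho_1-\rho_2\|_{\mathbb{E}_{\mu,T}}.
\]
Since $C_{\mathcal{F}_T}\to 0$ monotonically as $T\to 0$, one fixes $T$ so small that $R_2 C_{\mathcal{F}_T}\le\tfrac12$, and $\mathcal{K}$ becomes a strict contraction.

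The self-mapping is the step I expect to be the main obstacle, because of the apparent circularity that $\tilde T$ itself depends on the radius $K$. I would break this by choosing $K$ \emph{first}: writing $\rho_\ast:=\mathcal{L}^{-1}(0,0,\rho_0)$, linearity gives $\mathcal{K}(\rho)=\mathcal{L}^{-1}(\mathcal{F}(\rho),0)+\rho_\ast$ with $\|\rho_\ast\|_{\mathbb{E}_{\mu,T}}\le R_2\|\rho_0\|_{X_{\mu}}\le R_2 R$ uniformly in $T$, so that $K:=2R_2 R$ is admissible and fixes $\tilde T=\tilde T(K)$. With $K$ now fixed I would estimate
\[
\|\mathcal{K}(\rho)\|_{\mathbb{E}_{\mu,T}}\le \|\rho_\ast\|_{\mathbb{E}_{\mu,T}}+R_2\,\|\mathcal{F}(\rho)\|_{\mathbb{E}_{0,\mu}\times\tilde{\mathbb{F}}_{\mu}}\le R_2 R+R_2\,\|\mathcal{F}(\rho)\|_{\mathbb{E}_{0,\mu}\times\tilde{\mathbb{F}}_{\mu}}.
\]
The decisive observation is that the very product estimates established for Lemma \ref{lemab} (via Proposition \ref{embed}, Lemma \ref{ban-alg}, and the short time-weighted integration) show that every summand of $\mathcal{F}(\rho)$ carries a factor $C(T)\to 0$ or a positive power of $T$, uniformly over $\rho\in\mathcal{B}_{K,T}$; in particular the purely multiplicative boundary contributions vanish through $\|1\|_{W^{\nicefrac{1}{8}}_{2,\mu}(J)}\to 0$. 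Hence $\|\mathcal{F}(\rho)\|_{\mathbb{E}_{0,\mu}\times\tilde{\mathbb{F}}_{\mu}}\to 0$ as $T\to 0$, and choosing $T$ additionally so small that $R_2\|\mathcal{F}(\rho)\|\le R_2 R=\tfrac{K}{2}$ gives $\|\mathcal{K}(\rho)\|_{\mathbb{E}_{\mu,T}}\le K$, so $\mathcal{K}$ maps $\mathcal{B}_{K,T}$ into itself. Taking $T$ below both thresholds, Banach's fixed point theorem provides a unique $\rho\in\mathcal{B}_{K,T}$ with $\mathcal{K}(\rho)=\rho$, i.e.\ a unique solution of $\mathcal{L}(\rho)=(\mathcal{F}(\rho),\rho_0)$; tracking the two thresholds shows the asserted dependence $T=T(\alpha,\Phi^*,\eta,R,\|\mathcal{L}^{-1}\|)$.
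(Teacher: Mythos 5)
Your overall architecture -- recasting \eqref{equ} as the fixed point problem $\mathcal{K}(\rho)=\mathcal{L}^{-1}(\mathcal{F}(\rho),\rho_0)$ on the complete set $\mathcal{B}_{K,T}$, and obtaining the contraction from Lemma \ref{contraction} together with the $T$-uniform bound on $\|\mathcal{L}^{-1}\|$ from Remark \ref{glm} -- is exactly the paper's. Your choice $K:=2R_2R$ via the auxiliary function $\rho_\ast=\mathcal{L}^{-1}(0,0,\rho_0)$ is also in the spirit of the paper, which introduces the same extension $\widetilde{\rho_0}$. The divergence, and the first genuine gap, is in the self-mapping step: you reduce it to the assertion that $\|\mathcal{F}(\rho)\|_{\mathbb{E}_{0,\mu}\times\tilde{\mathbb{F}}_{\mu}}\to 0$ as $T\to 0$ uniformly over $\rho\in\mathcal{B}_{K,T}$, justified only by an appeal to ``the very product estimates established for Lemma \ref{lemab}.'' But Lemma \ref{lemab} establishes membership with bounds of the form $C(\alpha,\Phi^*,\eta,K,T)$, and the paper explicitly warns, after estimating $\|S\,\partial^2_\sigma\rho\,\partial_\sigma^3\rho\|_{L_{2,\mu}(J;L_2(I))}$, that ``the constant on the right-hand side does in general no longer fulfill $C(T)\to 0$ for $T\to 0$.'' So the smallness you need is not on record; to use it you would have to redo the term-by-term analysis of $F$ and $G_2$ and verify for \emph{every} summand (including the zeroth-order terms $S$ and $T(\sigma,\rho,\partial_\sigma\rho)$, and the products handled via Remark \ref{banrem}, whose constant $C(T_0)$ does not degenerate) that a factor $C(T)\to 0$ or $T^{s}$, $s>0$, survives uniformly over the ball. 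The paper deliberately avoids this: it writes $\mathcal{F}(\rho)=\bigl(\mathcal{F}(\rho)-\mathcal{F}(\widetilde{\rho_0})\bigr)+\mathcal{F}(\widetilde{\rho_0})$, absorbs the \emph{fixed} quantities $\|\mathcal{L}^{-1}\|\,\|\mathcal{F}(\widetilde{\rho_0})\|$, $\|\mathcal{L}^{-1}\|\,\|\rho_0\|_{X_\mu}$ and $C(T_0)\|\rho_0\|_{X_\mu}$ into the choice of $K$ (each $\leq K/4$), and then only needs the Lipschitz constant $C_{\mathcal{F}_T}$ of Lemma \ref{contraction} to be small. You should either adopt that decomposition or supply the missing uniform decay of $\|\mathcal{F}(\rho)\|$.

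The second gap is the uniqueness assertion. Banach's fixed point theorem gives you a unique fixed point \emph{in} $\mathcal{B}_{K,T}$, but the lemma claims a unique solution in $\mathbb{E}_{\mu,T}$. A second solution $\bar\rho\in\mathbb{E}_{\mu,T}$ of $\mathcal{L}(\bar\rho)=(\mathcal{F}(\bar\rho),\rho_0)$ need not lie in $\mathcal{B}_{K,T}$, so your argument does not exclude it. The paper closes this by choosing $\bar K\geq\max\{K,\|\bar\rho\|_{\mathbb{E}_{\mu,T}}\}$, shrinking the existence time to some $T_*$ on which $\mathcal{K}$ is a contraction on $\mathcal{B}_{\bar K,T_*}$ (so that $\rho$ and $\bar\rho$ agree on $[0,T_*]$), and then running a continuation argument -- restarting the problem at $T_0=\sup\{t:\rho=\bar\rho\text{ on }[0,t]\}$, which is legitimate because $\rho(T_0,\cdot)$ still satisfies the bounds \eqref{kleinrho} and \eqref{kleindrho} with $\nicefrac{2}{3}K_0$, $\nicefrac{2}{3}K_1$ -- to conclude $T_0=T$. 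This step is entirely absent from your proposal and must be added for the statement as formulated.
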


\begin{proof}
	We consider equation \eqref{equ}:
It is equivalent to the fixed point problem $K ({\rho}) = {\rho}$, where
	\begin{align}
	\mathcal{K}: {\mathcal{B}}_{K, T} &\rightarrow  \mathbb{E}_{\mu, T}, \nonumber \\
	{\rho} &\mapsto \mathcal{K} ({\rho}) := \mathcal{L}^{-1} (\mathcal{F}(\rho), \rho_0).
	\label{fixedpoint}
	\end{align}
	In order to solve the problem using Banach's fixed point theorem, we find an extension of the initial datum $\rho_0$ in the following way: We consider the linearized problem in \eqref{linprob} for the right-hand side $(0, 0, 0, \rho_0)$.
	By Theorem \ref{linex}, there exists a solution ${\widetilde{\rho_0}} \in \mathbb{E}_{\mu, T}$ for $0 < T < T_0$ and we find a constant $C(T_0) \geq \|\mathcal{L}^{-1}\|_{L(\mathbb{E}_{0, \mu} \times \tilde{\mathbb{F}}_{\mu} \times X_{\mu}; \mathbb{E}_{\mu, T})} >0$, such that
\begin{align}
\|{\widetilde{\rho_0}}\|_{ \mathbb{E}_{\mu, T}} \leq C(T_0) \|{\rho_0}\|_{X_{\mu}},
\label{hilfslsg}
\end{align}
for $0 < T < T_0$, cf.\ the Remark \ref{comprem} and Remark \ref{glm}. \\
	
	For showing that $\mathcal{K}: {\mathcal{B}}_{K, T} \rightarrow  \mathbb{E}_{\mu, T}$ is a self-mapping, we consider
	\begin{align*}
\| \mathcal{K} (\rho)\|_{ \mathbb{E}_{\mu, T}} &\leq \left\|\mathcal{L}^{-1} \right\|  \; \left(\|\mathcal{F}({\rho}) - \mathcal{F}(\widetilde{\rho_0})\|_{\mathbb{E}_{0, \mu} \times \tilde{\mathbb{F}}_{\mu}} + \|\mathcal{F}(\widetilde{\rho_0})\|_{\mathbb{E}_{0, \mu} \times \tilde{\mathbb{F}}_{\mu}} + \|\rho_0\|_{X_{\mu}} \right).
\end{align*}
	for every $\rho \in \mathcal{B}_{K,T}$ and $\|\mathcal{L}^{-1} \| := \|\mathcal{L}^{-1} \|_{L(\mathbb{E}_{0, \mu} \times \tilde{\mathbb{F}}_{\mu} \times X_{\mu};  \mathbb{E}_{\mu, T})}$.
	We choose $K > 0$, such that 
	\begin{align}
	\max \left\{C(T_0) \|{\rho_0}\|_{X_{\mu}}, \left\|\mathcal{L}^{-1} \right\| {\|\mathcal{F}(\widetilde{\rho_0})\|_{\mathbb{E}_{0, \mu} \times \tilde{\mathbb{F}}_{\mu}}},
	\left\|\mathcal{L}^{-1} \right\| {\|\rho_0\|_{X_{\mu}}} \right\} &\leq \frac{K}{4}
	\label{hilfhier}
	\end{align}
	for the constant $C(T_0)$ in \eqref{hilfslsg}. 
	Thus, Lemma \ref{contraction} yields
	\begin{align*}
	\| \mathcal{K} (\rho)\|_{ \mathbb{E}_{\mu, T}} &\leq \left\|\mathcal{L}^{-1} \right\| \left( C_{\mathcal{F}_T} \|\rho - \widetilde{\rho_0}\|_{ \mathbb{E}_{\mu, T}} + {\|\mathcal{F}(\widetilde{\rho_0})\|_{\mathbb{E}_{0, \mu} \times \tilde{\mathbb{F}}_{\mu}}} + {\|\rho_0\|_{X_{\mu}}} \right) \\
	&\leq \left\|\mathcal{L}^{-1} \right\| C_{\mathcal{F}_T} K + C_{\mathcal{F}_T} \frac{K}{4} + \frac{K}{4} + \frac{K}{4}, 
	\end{align*}
	where we used ${\rho} \in {\mathcal{B}}_{K,T}$ for $0< T < \max \{T_0, \tilde{T} \}$, cf.\ Definition \ref{ass}, and \eqref{hilfhier}. Moreover, Lemma \ref{contraction} guarantees
	\begin{align}
	\left\|\mathcal{L}^{-1} \right\| C_{\mathcal{F}_T} < \frac{1}{4}\quad \text{and}\quad C_{\mathcal{F}_T}\leq 1
	\label{wichtig}
	\end{align}
	by optionally making $T$ smaller, where we used that $\|\mathcal{L}^{-1}\|$ and $K$ do not depend on $T$, cf.\ Remark \ref{glm}.  
	Consequently, we obtain
	$\mathcal{K} ({\mathcal{B}}_{K, T}) \subset {\mathcal{B}}_{K, T}$.\\
	
	It remains to show that $\mathcal{K}$ is contractive on ${\mathcal{B}}_{K, T}$. 
	Similarly, it holds for all ${\rho}_1, {\rho}_2 \in {\mathcal{B}}_{K, T}$
	\begin{align*}
	\left\| \mathcal{K} ({\rho}_1) - \mathcal{K} ({\rho}_2) \right\|_{ \mathbb{E}_{\mu, T}}
	&\leq \left\| \mathcal{L}^{-1} \right\| \left\|\mathcal{F}({\rho}_1) - \mathcal{F}({\rho}_2)\right\|_{\mathbb{E}_{0, \mu} \times \tilde{\mathbb{F}}_{\mu}}\\
	&\leq \left\|\mathcal{L}^{-1} \right\|C_{\mathcal{F}_T} \left\|{\rho}_1 - {\rho}_2 \right\|_{ \mathbb{E}_{\mu, T}} \leq \frac{1}{4} \left\|{\rho}_1 - {\rho}_2 \right\|_{ \mathbb{E}_{\mu, T}}.
	\end{align*}
	Thus $\mathcal{K}: {\mathcal{B}}_{K, T} \rightarrow {\mathcal{B}}_{K, T}$ is a contraction and by Banach's fixed point theorem there exists a unique fixed point ${\rho}$ of \eqref{fixedpoint} in ${\mathcal{B}}_{K, T}$ for a small enough $T = T(\alpha, \Phi^*, \eta, \|\rho_{0}\|_{X_{\mu}}, \|\mathcal{L}^{-1} \|)$. \\
	
	Assume that $\bar{\rho} \in \mathbb{E}_{\mu, T}$ is another solution to problem \eqref{equ}. Then, we choose $\bar{K} \geq K$, such that $\bar{K} \geq \|\bar{\rho}\|_{\mathbb{E}_{\mu, T}}$. Next, we replace $K$ in the definition of the ball in Definition \ref{ass} by $\bar{K}$ and $\tilde{T}= \tilde{T}(K)$ by
	$\tilde{T}_{*}= \tilde{T}_{*}(\bar{K})$. Then there exists a $T_{*} \in (0, T)$ such that $\mathcal{K}: {\mathcal{B}}_{\bar{K}, T_{*}} \rightarrow {\mathcal{B}}_{\bar{K}, T_{*}}$ is again a contraction. Since ${\rho}$ and $\bar{\rho}$ are fixed points of $\mathcal{K}: {\mathcal{B}}_{\bar{K}, T_{*}} \rightarrow {\mathcal{B}}_{\bar{K}, T_{*}}$, it follows that ${\rho}_{|[0, T_{*}]} = \bar{\rho}_{|[0, T_{*}]}$. Let now
	\begin{align*}
	T_0 = \sup \left\{t \in [T_{*}, T] : {\rho}(t) = \bar{\rho}(t) \textrm{ for all } \tau \leq t \right\}.
	\end{align*}
	If it holds $T_0 < T$, we replace $\rho_0$ by $\bar{\rho}(T_0, \cdot)$ and $t$ by $t - T_0$ in \eqref{equ}. Here, we can use $\bar{\rho}(T_0, \cdot)$ as initial value, since the solution spaces embeds continuously into the temporal trace space, see Lemma \ref{unabneu}.\ref{unab1}, and ${\rho}(T_0, \cdot)$ fulfills the bounds \eqref{kleinrho} and \eqref{kleindrho} for $\nicefrac{2}{3}K_0$ and $\nicefrac{2}{3}K_1$ and we can restart the flow.
	By repeating the previous argument, we obtain ${\rho}_{|[T_0, T_{**}]} = \bar{\rho}_{|[T_0, T_{**}]}$, for $T_{**} \in (T_0, T]$, which contradicts the maximality of $T_0$. Thus, it holds $T_0 = T$ and ${\rho} \equiv \bar{\rho}$.
\end{proof}

The well-posedness-result Theorem \ref{local} follows immediately from Lemma \ref{cml}. We deduce the following Corollary as a direct consequence of Theorem \ref{local}:
\begin{cor}
	\label{orig}
	Let the assumptions of Theorem \ref{local} hold true and let $\rho \in \mathbb{E}_{\mu, T}$ be the unique solution to \eqref{prob} given by Theorem \ref{local}, which fulfills $\rho (\cdot, 0) = \rho_0$ in $X_{\mu}$. Then the function $(t, \sigma) \mapsto \Phi(t, \sigma) = \Psi(\sigma, \rho(t, \sigma))$, see \eqref{kurve}, which is an element of $\mathbb{E}_{\mu, T, \mathbb{R}^2}$, is a solution to \eqref{g1}-\eqref{g3} with $\Phi(0, \cdot) = \Phi(\rho_0)$.
\end{cor}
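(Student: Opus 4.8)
The plan is to read the reduction carried out in Section~\ref{rog} backwards. Since the solution $\rho$ of Theorem~\ref{local} is the fixed point produced in Lemma~\ref{cml}, it belongs to $\mathcal{B}_{K,T}$; hence Lemma~\ref{regu1} gives a lower bound $J(\rho)>C(\alpha,\Phi^*,\eta)>0$ on $[0,1]\times[0,T]$, so that $\sigma\mapsto\Psi(\sigma,\rho(t,\sigma))$ is a regular parametrization for every $t\in[0,T]$, and Remark~\ref{wohldef}.\ref{wd1} gives $\langle\Psi_q,R\Psi_\sigma\rangle(\sigma,\rho)>C>0$. Consequently every algebraic step of Section~\ref{rog}, in particular the division leading from \eqref{g1} to \eqref{divide}, is legitimate for this $\rho$ and, the prefactors being nonzero, reversible.

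First I would establish the regularity $\Phi\in\mathbb{E}_{\mu,T,\mathbb{R}^2}$. Differentiating $\Phi=\Psi(\sigma,\rho)$ by the chain rule gives $\Phi_t=\Psi_q(\sigma,\rho)\,\rho_t$, while the top spatial derivative takes the form $\partial_\sigma^4\Phi=\Psi_q(\sigma,\rho)\,\partial_\sigma^4\rho$ plus a finite sum of products of the derivatives $\partial_\sigma^j\Psi$ ($0\le j\le4$) evaluated at $(\sigma,\rho)$ with lower-order derivatives $\partial_\sigma^k\rho$ ($0\le k\le3$). The time derivative lies in $L_{2,\mu}(J;L_2(I;\mathbb{R}^2))$ because $\rho_t$ does and $\Psi_q$ is bounded (Remark~\ref{smooth}). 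For the top spatial derivative, the leading term $\Psi_q\,\partial_\sigma^4\rho$ is in $L_{2,\mu}(J;L_2)$ since $\rho\in\mathbb{E}_{\mu,T}$, while each of the finitely many lower-order products is handled exactly as the non-linearities in the proof of Lemma~\ref{lemab}: the uniform bounds on $\Psi$ and on $\rho,\partial_\sigma\rho$, the mapping property of Lemma~\ref{unabneu}.\ref{unab234}, and Proposition~\ref{embed} place every such product in $L_{2,\mu}(J;L_2)$. Hence $\Phi\in\mathbb{E}_{\mu,T,\mathbb{R}^2}$.

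Next I would verify \eqref{g1}--\eqref{g3}. Condition \eqref{g11} holds by construction, since $[\Psi(\sigma,q)]_2=0$ for $\sigma\in\{0,1\}$ (Remark~\ref{smooth}), so $\Phi(t,\sigma)\in\mathbb{R}\times\{0\}$ there. For the evolution law I use the identities of Section~\ref{rog}, namely $V=\tfrac{1}{J(\rho)}\langle\Psi_q,R\Psi_\sigma\rangle\,\rho_t$ and $\Delta(\rho)\kappa(\rho)=\partial_{ss}\kappa_{\Gamma_t}$: multiplying the first line of \eqref{prob} by the positive factor $\tfrac{1}{J(\rho)}\langle\Psi_q,R\Psi_\sigma\rangle$ turns it into $V=-\partial_{ss}\kappa_{\Gamma_t}$, which is \eqref{g1}. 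The angle condition \eqref{g2} is equivalent to $\partial_\sigma\rho=0$ at $\sigma\in\{0,1\}$ by the computation \eqref{schlecht}--\eqref{anglecon}, which is precisely the second line of \eqref{prob}, and the third line of \eqref{prob} together with $\partial_s=\tfrac{1}{J(\rho)}\partial_\sigma$ and $J(\rho)>0$ yields $\partial_s\kappa_{\Gamma_t}=0$ on $\partial\Gamma_t$, i.e.\ \eqref{g3}. Finally $\Phi(0,\cdot)=\Psi(\cdot,\rho_0)=\Phi(\rho_0)$ follows from $\rho(\cdot,0)=\rho_0$.

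The only genuinely technical point is the regularity claim $\Phi\in\mathbb{E}_{\mu,T,\mathbb{R}^2}$: the chain-rule expansion of $\partial_\sigma^4\Phi$ produces exactly the type of low-regularity products in weighted anisotropic $L_2$-Sobolev spaces that were the crux of Lemma~\ref{lemab}, so establishing that they lie in $L_{2,\mu}(J;L_2)$ requires the product and embedding estimates developed there rather than a naive use of H\"older's inequality; everything else is a direct, reversible reading of the reduction in Section~\ref{rog}.
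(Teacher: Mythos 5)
Your argument is correct and is essentially the paper's own reasoning: the paper states Corollary \ref{orig} as a direct consequence of Theorem \ref{local} and the reduction in Section \ref{rog}, which is exactly what you carry out by reading that reduction backwards, using the lower bounds on $J(\rho)$ and $\langle\Psi_q,R\Psi_\sigma\rangle$ to justify reversibility. Your additional care with the regularity $\Phi\in\mathbb{E}_{\mu,T,\mathbb{R}^2}$ via the chain rule and the product estimates of Lemma \ref{lemab} is a sound (and slightly more detailed) account of what the paper leaves implicit.
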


\begin{rem}
Note that we did not prove a statement about uniqueness of the geometric problem. 
\end{rem}

%%%%%%%%%%%%%%%%%%%%%%%%%%%%%%%%%%%%%%%%%%%%%%%%%%%%%%%%%%%%%%%%%%%%%%%%%%%%%%%%%%%%%%%%%%%%%%%%%%%%%%%%%%%%%%%%%%%%%%%%%
%%%%%%%%%%%%%%%%%%%%%%%%%%%%%%%%%%%%%%%%%%%%%%%%%%%%%%%%%%%%%%%%%%%%%%%%%%%%%%%%%%%%%%%%%%%%%%%%%%%%%%%%%%%%%%%%%%%%%%%%%
%%%%%%%%%%%%%%%%%%%%%%%%%%%%%%%%%%%%%%%%%%%%%%%%%%%%%%%%%%%%%%%%%%%%%%%%%%%%%%%%%%%%%%%%%%%%%%%%%%%%%%%%%%%%%%%%%%%%%%%%%
%%%%%%%%%%%%%%%%%%%%%%%%%%%%%%%%%%%%%%%%%%%%%%%%%%%%%%%%%%%%%%%%%%%%%%%%%%%%%%%%%%%%%%%%%%%%%%%%%%%%%%%%%%%%%%%%%%%%%%%%%
%%%%%%%%%%%%%%%%%%%%%%%%%%%%%%%%%%%%%%%%%%%%%%%%%%%%%%%%%%%%%%%%%%%%%%%%%%%%%%%%%%%%%%%%%%%%%%%%%%%%%%%%%%%%%%%%%%%%%%%%%
%%%%%%%%%%%%%%%%%%%%%%%%%%%%%%%%%%%%%%%%%%%%%%%%%%%%%%%%%%%%%%%%%%%%%%%%%%%%%%%%%%%%%%%%%%%%%%%%%%%%%%%%%%%%%%%%%%%%%%%%%

\footnotesize
\section*{Acknowledgement} 
The results of this paper are part of the second author's PhD Thesis, which was supported by the DFG through the Research Training Group GRK 1692 "Curvature, Cycles, and Cohomology" in Regensburg. The support is gratefully acknowledged.

\end{document}